\declaretheorem[
	name=Theorem,
	numberwithin=section
	]{thm}
\declaretheorem[
	name=Lemma,
	sibling=thm,
	]{lem}
\declaretheorem[
	name=Proposition,
	sibling=thm,
	]{prop}
\declaretheorem[
	name=Corollary,
	sibling=thm,
	]{cor}
\declaretheorem[
	name=Definition,
	style=definition,
        sibling=thm
	]{defin}
\declaretheorem[
	name=Remark,
	style=remark,
	numbered=no
	]{rem}	
\declaretheorem[
	name=Example,
	style=remark,
        sibling=thm
	]{exam}
\newcommand{\cat}[1]{\mathcal{#1}}
\newcommand{\str}[1]{\mathbb{#1}}
\newcommand{\op}{^{\mathop{op}}}
\DeclareMathOperator{\Set}{Set} 
\DeclareMathOperator{\Cat}{Cat}
\DeclareMathOperator{\Elts}{\int\!}
\DeclareMathOperator{\id}{id}
\DeclareMathOperator{\im}{im}
\DeclareMathOperator{\ar}{ar\!}
\journal{Elsevier}
\begin{document}

\begin{frontmatter}

\title{K\H{o}nig = Ramsey \\ \Large A compactness lemma for Ramsey categories} 
\author[1]{Maximilian Hadek}
\affiliation[1]{organization={Department of Algebra, Charles University},
            addressline={Sokolovská 49/83}, 
            city={Prague 8},
            postcode={186 75}, 
            country={Czech Republic}}

\begin{abstract}
    We prove a new characterization of the Ramsey property of categories in terms of a generalized form of K\H{o}nig's tree lemma. Afterwards, we discuss its applications to structural Ramsey theory. In particular, we provide a new proof of the existence and uniqueness of minimal Ramsey expansions and a new transfer theorem which uses Grothendieck opfibrations and unifies several known Ramsey transfers.
\end{abstract}

\end{frontmatter}

\section{Introduction}

Given some sets and some maps between them, is there a selection of elements, one from each set, which is compatible with all given maps?
The simplest case, when there are no maps, is precisely the axiom of choice: given any family of nonempty sets, one can pick a single element out of each of them. Slightly less simple, K\H{o}nigs infinity lemma deals with the case when the sets and maps are arranged in sequence.

\begin{thm}[K\H{o}nig, 1927 \cite{könig1927}]
    Consider finite nonempty sets $E_1,E_2,E_3,\dots$ and maps $E_{n+1}\xrightarrow{R_n}E_n$. We can find in every set $E_n$ an element $a_n$ such that $R_n(a_{n+1}) = a_n$ for all $n\in\mathbb{N}$.\footnote{While this is almost the original wording of the lemma, some readers might be more familiar with a graph theoretic formulation in terms of trees. To see the similarity, draw all elements of all $E_n$ as vertices and connect them by an edge whenever $R_n(a_{n+1})=a_n$.}
    
\end{thm}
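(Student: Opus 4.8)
The plan is to run the classical ``good vertex'' argument, phrased directly in the language of the maps $R_n$. For $n\le m$ let $R_n^m=R_n\circ R_{n+1}\circ\cdots\circ R_{m-1}\colon E_m\to E_n$ be the composite, with the convention $R_n^n=\id_{E_n}$, and call an element $a\in E_n$ \emph{good} if for every $m\ge n$ the fibre $(R_n^m)^{-1}(a)\subseteq E_m$ is nonempty, i.e.\ $a$ can be traced arbitrarily far up the sequence. These fibres are coherent: if $(R_n^m)^{-1}(a)\neq\varnothing$ and $n\le m'\le m$, then applying $R_{m'}^m$ to a witness shows $(R_n^{m'})^{-1}(a)\neq\varnothing$ as well, so being good is equivalent to having a preimage at infinitely many levels.

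The argument rests on two observations, both instances of the same finiteness-plus-maximum trick. \emph{First}, $E_1$ contains a good element: otherwise each $a\in E_1$ carries a level $m_a$ with $(R_1^{m_a})^{-1}(a)=\varnothing$, and since $E_1$ is finite we may form $M=\max_{a\in E_1}m_a$; by coherence $(R_1^M)^{-1}(a)=\varnothing$ for \emph{every} $a\in E_1$, which is absurd, as $E_M$ is nonempty and each of its elements lies over some element of $E_1$. \emph{Second}, every good $a\in E_n$ has a good element among its immediate preimages $R_n^{-1}(a)\subseteq E_{n+1}$: that set is finite (it sits inside $E_{n+1}$) and nonempty (since $a$ is good at level $n+1$), and if none of its members $c$ were good we could pick witnessing levels $m_c\ge n+1$, set $M=\max_c m_c$, and conclude $(R_n^M)^{-1}(a)=\bigcup_{c\in R_n^{-1}(a)}(R_{n+1}^M)^{-1}(c)=\varnothing$, contradicting goodness of $a$.

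Granting the two observations, I finish by recursion: choose a good $a_1\in E_1$, and having chosen a good $a_n\in E_n$ choose a good $a_{n+1}$ in the nonempty finite set $R_n^{-1}(a_n)$. Then $R_n(a_{n+1})=a_n$ for all $n$, which is exactly the assertion of the lemma.

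Two points deserve to be flagged rather than buried, and each is where the ``real'' content sits. The recursive choice of the $a_n$ is a genuine use of the axiom of dependent choice --- at every stage one picks from a nonempty (here even finite) set --- and some such principle is unavoidable, since K\H{o}nig's lemma is not a theorem of $\mathsf{ZF}$ alone. And the finiteness of the $E_n$ is used essentially: it is precisely what lets us turn ``nonempty fibre at every level'' into ``empty fibre from one level $M$ onward'' via a finite maximum, and the statement fails without it (take $E_n=\{k\in\mathbb{N}:k\ge n\}$ with $R_n$ the inclusion). I expect no obstacle beyond keeping these bookkeeping conventions clean; a slicker but equivalent packaging would call $a$ good when it has infinitely many preimages in $\bigsqcup_m E_m$ and replace the two finite-maximum steps by single appeals to the pigeonhole principle, and I would remark on this variant while carrying out the version above.
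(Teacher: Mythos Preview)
Your argument is a correct and complete rendition of the classical ``good vertex'' proof of K\H{o}nig's lemma; there is no gap.

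However, the paper does not actually prove this theorem: it is stated in the introduction as a classical result with a citation to K\H{o}nig's 1927 paper and serves only as motivation for the main Theorem~\ref{thm:ramsey=könig}. So there is no ``paper's own proof'' to compare against. If one insisted on extracting a proof from the paper's machinery, it would run through Lemma~\ref{lem:compactness1} (Tychonoff compactness of the product $\prod_n E_n$) together with the trivial observation that the index category $(\mathbb{N},\le)$ is confluent and Ramsey, so that Corollary~\ref{cor:compactRamsey} applies. That route is quite different from yours: it trades the elementary pigeonhole/finite-maximum bookkeeping for a single appeal to compactness, at the cost of invoking Tychonoff (hence a stronger choice principle than the dependent choice you flag). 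Your direct argument is the more economical one for this particular statement; the paper's approach buys generality, handling arbitrary confluent Ramsey index categories uniformly.
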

More complicated shapes in which sets and maps can be arranged are categories $\cat C$, which are collections of points and arrows together with a formal composition of arrows. A diagram in the shape of $\cat C$ assigns to every point a set and to every arrow a map, such that composition is respected, and \emph{solutions} to a diagram are the selections of elements, one from each set, which are invariant under the given maps.
Our main theorem characterizes those categories, for which a generalized K\H{o}nigs lemma holds, i.e. those shapes where every diagram of finite nonempty sets has a solution.

\begin{thm}
\label{thm:ramsey=könig}
    If $\cat C$ is an essentially small and locally finite category, then the following are equivalent: 
\begin{enumerate}
    \item every diagram in the shape of $\cat C\op$ which consists of finite nonempty sets has a solution
    \item $\cat C$ is confluent and has the Ramsey property.
\end{enumerate}
\end{thm}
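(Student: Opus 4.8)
The plan is to prove the two implications separately; in each direction a compactness argument converts the infinitary statement into finite combinatorics, and the Ramsey property is genuinely used only in the direction $(2)\Rightarrow(1)$.

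\smallskip
\noindent\textbf{$(2)\Rightarrow(1)$.} After replacing $\cat C$ by a small skeleton, fix a diagram $F\colon\cat C\op\to\Set$ with finite nonempty values. Topologise $\prod_{c}F(c)$ as a product of finite discrete spaces; it is compact, and $\lim F$ is the intersection over all morphisms $h$ of the clopen sets $Z_h=\{\,x : F(h)(x_{\operatorname{cod}h})=x_{\operatorname{dom}h}\,\}$. By the finite intersection property it suffices to see that every finite intersection $Z_{h_1}\cap\dots\cap Z_{h_n}$ is nonempty; since the subcategory generated by $\{h_1,\dots,h_n\}$ is finite (local finiteness), this reduces the problem to showing $\lim(F|_{\cat D})\neq\emptyset$ for every finite subcategory $\cat D\subseteq\cat C$ — where we remain free to exploit objects and morphisms of $\cat C$ lying outside $\cat D$.

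\smallskip
\noindent Fix such a $\cat D$. A short argument from confluence shows that any finite family of objects of $\cat C$ has a common upper bound, so there is an object $E$ together with morphisms $\mu_c\colon c\to E$ for each object $c$ of $\cat D$. These need not form a cocone, but after adjoining them to $\cat D$ a compatible family is the same thing as an element $x\in F(E)$ such that, for every object $c$, the assignment $\gamma\mapsto F(\gamma)(x)$ is constant on the finite set $S_c$ of all morphisms $c\to E$ in the enlarged category. The Ramsey property produces such an $x$ by ``homogenisation through restriction'': if $C$ witnesses the Ramsey property for the triple $\bigl(c,E,|F(c)|\bigr)$ and $y\in F(C)$ is arbitrary, then colouring $\varphi\in\cat C(c,C)$ by $F(\varphi)(y)\in F(c)$ and applying the Ramsey property gives $g\in\cat C(E,C)$ for which $F(\gamma)\bigl(F(g)(y)\bigr)$ does not depend on $\gamma\in\cat C(c,E)$, so $F(g)(y)$ works for that single $c$. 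To treat all objects of $\cat D$ at once, iterate: build a tower $E=C_0\to C_1\to\dots\to C_r$ in which $C_i$ witnesses the Ramsey property relative to the $i$-th object of $\cat D$ and the previous stage $C_{i-1}$ (with as many colours as $F$ takes values there); then, after fixing $y\in F(C_r)$, pull the induced colourings back down the tower to obtain a single morphism $g\colon E\to C_r$ homogenising all $r$ of them simultaneously, and put $x:=F(g)(y)$. This yields the compatible family, hence $(1)$.

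\smallskip
\noindent\textbf{$(1)\Rightarrow(2)$.} Each half of $(2)$ is obtained by manufacturing, out of a failure, a diagram of finite nonempty sets with no solution. If a span $B\xleftarrow{f}A\xrightarrow{g}C$ admitted no completion to a commuting square, put $F(u)=\{\,\sigma\colon\cat C(A,u)\to\{0,1\} \mid \sigma\equiv0\text{ on }\cat C(B,u)\cdot f\text{ and }\sigma\equiv1\text{ on }\cat C(C,u)\cdot g\,\}$, with $F(h)$ given by precomposition: local finiteness makes the values finite, non-completion makes the two constrained subsets of $\cat C(A,u)$ disjoint and hence $F(u)$ nonempty, yet evaluating a hypothetical solution at $\id_A$ and testing the $B$-component against $f$ and the $C$-component against $g$ forces $\sigma_A(\id_A)$ to be both $0$ and $1$. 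If the Ramsey property failed for a triple $(A,B,k)$, put $F(u)=\{\,\chi\colon\cat C(A,u)\to k \mid \chi\text{ is non-monochromatic on }g\cdot\cat C(A,B)\text{ for every }g\in\cat C(B,u)\,\}$; the failure is precisely the statement that every $F(u)$ is nonempty, the values are finite, and a solution evaluated at $\id_A$ must be a globally constant colouring at every object, in particular at $B$, where being constant makes it monochromatic on $\id_B\cdot\cat C(A,B)$, contradicting membership in $F(B)$. In both cases $(1)$ is contradicted.

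\smallskip
\noindent I expect the crux to be the simultaneous homogenisation above: the Ramsey property only controls one ``pattern object'' at a time, so the tower-and-pullback step must be organised so that a single $g$ genuinely works for every object of $\cat D$, and one must keep track that replacing Ramsey witnesses by larger objects (which is always permitted) does not leave the connected component containing $\cat D$. The compactness reduction and the common-upper-bound lemma, by contrast, are routine.
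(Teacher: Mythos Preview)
Your proof is correct and follows the paper's route closely: the Tychonoff compactness reduction (the paper's Lemma~3.2), the tower of iterated Ramsey witnesses above a common upper bound (Proposition~3.1), and the ``bad colouring'' diagram $F(u)=\{\chi\colon\cat C(A,u)\to k\mid\dots\}$ for $(1)\Rightarrow$\,Ramsey are exactly the paper's arguments.

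Two points are worth flagging. First, your claim that ``any finite family of objects of $\cat C$ has a common upper bound'' is only true inside a single connected component; the finite subcategory $\cat D$ produced by compactness may meet several components, so you must reduce to connected $\cat C$ first (the paper does this explicitly in Corollary~3.3). You note the component issue in your closing caveat, but it belongs at the very start of the $(2)\Rightarrow(1)$ argument, not among the things to ``keep track of'' later. Second, your $(1)\Rightarrow$\,confluence diagram differs from the paper's: you take $F(u)=\{\sigma\colon\cat C(A,u)\to\{0,1\}\mid\sigma\equiv 0\text{ on }\cat C(B,u)\!\cdot\! f,\ \sigma\equiv 1\text{ on }\cat C(C,u)\!\cdot\! g\}$ and actually establish that every span completes to a \emph{commuting} square (amalgamation), whereas the paper uses the much simpler $\{0\}/\{1\}/\{0,1\}$-valued diagram of Proposition~2.6 to obtain only its own, weaker notion of confluence (componentwise joint embedding). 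Your route is valid---amalgamation collapses any zigzag to a cospan and hence implies the paper's confluence---and in fact yields a bonus corollary (confluent $+$ Ramsey $\Rightarrow$ amalgamation); just be aware that ``confluent'' in this paper is not the rewriting-theoretic notion your construction is tailored to.
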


The symbol $\op$ means that the maps of the diagram point in the \emph{opposite} direction as the arrows in $\cat C$.

After proving Theorem~\ref{thm:ramsey=könig} in Section~\ref{sec:proof}, we discuss its applications to structural Ramsey theory. In particular, in Section~\ref{sec:structures} we generalize the notion of expansions to arbitrary categories and prove the following refinement of a theorem which initially appeared in the context of topological dynamics; see \cite{KPT2005} and \cite{NVT2013precompactExpansions}. 
\begin{thm}
\label{thm:introMinimal}
    If a locally finite and essentially small confluent category has a precompact confluent Ramsey expansion, then it has a precompact confluent Ramsey expansion with the expansion property. Moreover, this expansion is unique up to isomorphism of expansions.
\end{thm}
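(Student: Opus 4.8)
The plan is to realise the desired expansion as a \emph{minimal sub-expansion} of the given one and to read the expansion property and the uniqueness off from minimality. Fix the given precompact confluent Ramsey expansion $\pi\colon\cat D\to\cat C$; precompactness means that, up to isomorphism, every object of $\cat C$ has only finitely many expansions, so the fibres of $\pi$ are finite, and by Theorem~\ref{thm:ramsey=könig} applied to $\cat D$ every diagram of finite nonempty sets in the shape of $\cat D\op$ has a solution. Consider the poset, ordered by inclusion, of confluent Ramsey sub-expansions of $\cat D$, i.e. hereditary subcategories $\cat E\subseteq\cat D$ for which $\pi|_{\cat E}$ is still an expansion; it contains $\cat D$, and by precompactness every member of it is again precompact.

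\emph{Existence of a minimal member.} By Zorn's lemma it is enough to show that every descending chain $\cat D=\cat E_0\supseteq\cat E_1\supseteq\cdots$ of confluent Ramsey sub-expansions has a lower bound. The candidate is the intersection $\cat E_\infty=\bigcap_i\cat E_i$: it is still an expansion, since the fibres over a fixed object of $\cat C$ form a descending chain of finite nonempty sets and so intersect nontrivially, and confluence survives by a routine amalgamation. The real content is that $\cat E_\infty$ is again Ramsey, and here Theorem~\ref{thm:ramsey=könig} is the essential tool: it reduces this to producing a solution for each finite-nonempty-set diagram $F$ in the shape of $\cat E_\infty\op$, which I would obtain by transferring $F$ along the chain to compatible diagrams over the $\cat E_i\op$, taking their (profinite, nonempty) solution sets, and passing to the inverse limit --- another instance of König's lemma --- to recover a solution of $F$. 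An alternative, more hands-on route is to build the minimal sub-expansion directly: call an expansion of an object $A$ \emph{unavoidable} if it admits a morphism into every expansion of every sufficiently large object of $\cat C$; precompactness together with the Ramsey property should force every $A$ to possess an unavoidable expansion, and the unavoidable expansions then ought to form the minimal sub-expansion.

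\emph{Minimality gives the expansion property; and uniqueness.} Were the minimal $\cat E$ to fail the expansion property, there would be an object $A^{\dagger}$ over $A=\pi(A^{\dagger})$ such that for every object $B$ of $\cat C$ some expansion of $B$ in $\cat E$ receives no morphism from $A^{\dagger}$; removing $A^{\dagger}$ together with its hereditary closure would produce a strictly smaller sub-expansion that is still an expansion --- some unavoidable expansion of $A$ must survive, by the colouring argument above, which uses precompactness to see that sending a large expanded object to the expansion it induces on a fixed copy of $A$ is a \emph{finite} colouring --- and still Ramsey, checked once more through Theorem~\ref{thm:ramsey=könig} by pulling solutions back from $\cat E$; this contradicts minimality. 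For uniqueness, given two confluent precompact Ramsey expansions $\pi_1\colon\cat E_1\to\cat C$ and $\pi_2\colon\cat E_2\to\cat C$ with the expansion property, I would form the joint expansion $\cat E_1\times_{\cat C}\cat E_2$ and check that it is confluent, precompact, and --- using the Ramsey property of $\cat E_1$ together with the precompactness of $\cat E_2$ to transfer colourings, again organised via Theorem~\ref{thm:ramsey=könig} --- Ramsey; the expansion property of $\cat E_1$ then forces the projection $\cat E_1\times_{\cat C}\cat E_2\to\cat E_1$ to be an equivalence, since the $\cat E_2$-coordinate of an object is pinned down by its $\cat E_1$-coordinate. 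By symmetry the projection to $\cat E_2$ is an equivalence too, whence $\cat E_1\simeq\cat E_2$ over $\cat C$, i.e. an isomorphism of expansions.

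The main obstacle is keeping the Ramsey property alive under the operations in play --- intersecting sub-expansions, deleting objects, forming joint expansions --- since it is inherited neither by subcategories nor under these constructions in any naive way. Theorem~\ref{thm:ramsey=könig} is precisely the lever: it turns ``is this category Ramsey?'' into ``does every finite-nonempty-set diagram over it have a solution?'', a property that is stable under inverse limits by a further use of König's lemma and that meshes with the precompactness-driven transfer of colourings. Getting the bookkeeping of these diagram transfers exactly right --- so that solutions genuinely pull back and push forward --- is where I expect the real work to be.
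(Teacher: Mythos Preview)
Your high-level plan --- Zorn's lemma for a minimal sub-expansion, minimality gives the expansion property, then uniqueness --- matches the paper, but there is a genuine gap at the step you yourself flag as ``the real content''. You work in the poset of \emph{all} confluent Ramsey sub-expansions and must show the intersection $\cat E_\infty$ of a chain is again Ramsey. Your proposal to ``transfer $F$ along the chain to compatible diagrams over the $\cat E_i\op$'' does not work as stated: $\cat E_\infty$ is a \emph{sub}category of each $\cat E_i$, so a presheaf on $\cat E_\infty$ has no canonical extension to $\cat E_i$ --- choosing the values on the extra objects compatibly is essentially the problem you are trying to solve. The same issue recurs when you delete $A^\dagger$ and claim the result is ``still Ramsey \ldots\ by pulling solutions back from $\cat E$'', and ``confluence survives by a routine amalgamation'' is not routine either. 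Your uniqueness argument has a separate gap: the projection $\cat E_1\times_{\cat C}\cat E_2\to\cat E_1$ is always surjective once $\cat E_2$ is an expansion, while the expansion property of $\cat E_1$ (via Proposition~\ref{prop:ep}) only says that homomorphisms \emph{into} $\cat E_1$ are surjective; this does not force the projection to be an equivalence.

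The paper sidesteps all of this with one idea you are missing: restrict Zorn's lemma to the poset of \emph{images of endomorphisms} of $\pi$. Then the intersection $\cat E^*$ of a chain need only be shown to be an expansion (surjectivity on fibres is exactly your K\H{o}nig argument on finite nonempty sets), not a Ramsey one; Lemma~\ref{lem:ramseyHoms} immediately supplies a homomorphism $\cat D\to\cat E^*$, i.e.\ an endomorphism of $\pi$ whose image lies inside every chain member, giving the Zorn lower bound. The resulting minimal image is the \emph{core} --- an expansion all of whose endomorphisms are bijective --- and it is confluent Ramsey for free, because it is a retract of $\cat D$ and the Ramsey property transfers along retracts (Proposition~\ref{prop:preadjunction}). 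The equivalences ``core $\Leftrightarrow$ every homomorphism to it is surjective $\Leftrightarrow$ expansion property'' (Proposition~\ref{prop:ep} and Lemma~\ref{lem:coreSurj}) then deliver both EP and uniqueness without any fibre-product construction: two cores admit surjective homomorphisms to each other by Lemma~\ref{lem:ramseyHoms}, and a surjective endomorphism of a precompact expansion is bijective.
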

Moreover, in Section~\ref{sec:transfers} we prove a new Ramsey transfer theorem about Grothendieck opfibrations, providing a common generalization of three known Ramsey transfers, namely products \cite{sokic2012ramsey}, discrete opfibrations \cite{mavsulovic2023fibrations} and blowups.
\begin{thm}
\label{thm:introOpfibration}
    If a Grothendieck opfibration is locally finite and essentially small, has confluent Ramsey fibers and a confluent Ramsey base, then it is confluent and Ramsey.
\end{thm}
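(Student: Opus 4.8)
The plan is to derive the theorem from Theorem~\ref{thm:ramsey=könig}. Write the opfibration as $p\colon\cat E\to\cat B$. Since $\cat E$ is locally finite and essentially small, Theorem~\ref{thm:ramsey=könig} reduces the claim to the statement that every diagram $F\colon\cat E\op\to\Set$ consisting of finite nonempty sets has a solution (from which confluence and the Ramsey property of $\cat E$ follow together). Fix such an $F$. For each object $b$ of $\cat B$ the fiber $\cat E_b$ is again locally finite and essentially small --- part of what it means for $p$ to be locally finite and essentially small --- and by assumption confluent and Ramsey, so Theorem~\ref{thm:ramsey=könig} applies to $\cat E_b$ and the restricted diagram $F|_{\cat E_b\op}$ has a solution. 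Let $\Sigma_b$ be its set of solutions, that is, the limit $\varprojlim_{X\in\cat E_b\op}F(X)$; it is nonempty, and, being a closed subspace of the product of the finite discrete sets $F(X)$, it is compact Hausdorff. The cocartesian liftings of $p$ organize $b\mapsto\Sigma_b$ into a diagram $\Sigma\colon\cat B\op\to\Set$ with continuous transition maps: a morphism $f\colon b\to b'$ of $\cat B$ sends $\sigma\in\Sigma_{b'}$ to the family $f^{*}\sigma$ given on an object $X$ over $b$ by $(f^{*}\sigma)(X)=F(\bar f_X)\bigl(\sigma(f_{!}X)\bigr)$, where $\bar f_X\colon X\to f_{!}X$ is the chosen cocartesian lift of $f$ at $X$; one checks that $f^{*}\sigma$ is again a solution over $b$, and functoriality of $f\mapsto f^{*}$ is the universal property of cocartesian morphisms. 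Finally, factoring an arbitrary morphism of $\cat E$ as a cocartesian lift followed by a fiber morphism and using that solutions over a fiber are invariant under morphisms of that fiber, a short diagram chase identifies the solutions of $F$ with precisely the solutions of the diagram $\Sigma$ on $\cat B\op$; this is an instance of the computation of a limit over the total space of a fibration as an iterated limit over base and fibers, here for $\cat E\op\to\cat B\op$.

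It remains to produce a solution of $\Sigma$. The base $\cat B$ is locally finite, essentially small, confluent and Ramsey, so by Theorem~\ref{thm:ramsey=könig} every diagram of finite nonempty sets on $\cat B\op$ has a solution; the point is to upgrade this to the compact-valued diagram $\Sigma$. Inside the compact Hausdorff space $\prod_b\Sigma_b$ the solution set of $\Sigma$ is the intersection, over all morphisms of $\cat B$, of the closed sets of families compatible with each morphism, so by compactness it suffices to solve any finitely many of the compatibility constraints at once. A finite set of constraints involves only finitely many objects, and each $\Sigma_b$ is profinite, so one may replace the relevant $\Sigma_b$ by sufficiently fine finite quotients and reduce to a finite-valued diagram on $\cat B\op$, at which point the König property of $\cat B$ applies. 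Unwinding the identifications, a solution of $\Sigma$ yields a solution of $F$, so $\cat E$ has the König property and hence, by Theorem~\ref{thm:ramsey=könig}, is confluent and Ramsey.

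I expect the main obstacle to be this last upgrade: the fiberwise solution sets $\Sigma_b$ are genuinely infinite in general --- already when $\cat B=(\omega,\le)$ --- so the finite König property of $\cat B$ cannot be applied to $\Sigma$ directly, and what is really needed is the ``profinite'' strengthening of the König lemma of Theorem~\ref{thm:ramsey=könig}, deduced from its finite form by the compactness argument above; carrying out the reduction to a genuine finite-valued diagram on all of $\cat B\op$ is the part that requires care. A more minor nuisance is coherence: pushforward along cocartesian lifts is only pseudofunctorial, so to make $\Sigma$ a strict functor $\cat B\op\to\Set$ one should fix a cleavage of $p$ and absorb the resulting coherence isomorphisms, or replace $p$ by an equivalent split opfibration. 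Everything else --- inheritance of local finiteness and essential smallness by the fibers and the base, nonemptiness of the $\Sigma_b$, the check that $f^{*}\sigma$ is a solution, and the diagram chase matching solutions of $F$ with solutions of $\Sigma$ --- is routine.
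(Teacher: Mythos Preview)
Your overall architecture is the paper's: compute the limit over the total category as an iterated limit, first over fibers and then over the base, and identify solutions of $F$ with solutions of the resulting base diagram $\Sigma$. You also correctly locate the only real difficulty, namely that the fiberwise solution sets $\Sigma_b$ are in general infinite, so the K\H{o}nig property of $\cat B$ (which Theorem~\ref{thm:ramsey=könig} only supplies for finite-valued diagrams) does not apply to $\Sigma$ directly.

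The gap is in how you propose to close it. Reducing by compactness inside $\prod_b\Sigma_b$ leaves you with finitely many compatibility constraints over finitely many objects $b_1,\dots,b_n$, but to invoke the K\H{o}nig property of $\cat B$ you still need a finite nonempty-valued diagram on \emph{all} of $\cat B\op$. ``Replace the relevant $\Sigma_b$ by sufficiently fine finite quotients'' does not say what finite set to put at the other objects, nor why the transition maps of $\Sigma$ descend to the chosen quotients, nor how a solution in the quotients lifts back to the $\Sigma_b$. This is precisely the place where the opfibration structure has to do work, and your sketch does not use it there; as written, the step you flag as ``requires care'' is the entire content of the proof.

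The paper's maneuver is to apply compactness one level earlier, on the total category. By Lemma~\ref{lem:compactness1} it suffices to produce an $M$-solution for an arbitrary finite set $M$ of arrows in $\Elts\cat S$; such an $M$ touches only finitely many objects $(R_i,S_i^j)$. Now for \emph{every} $R\in\cat R$ let $\hat{\cat S}_R\le\cat S_R$ be the full subcategory on the objects $\cat S_f(S_i^j)$, ranging over all arrows $R_i\xrightarrow{f}R$. Local finiteness of $\cat R$ makes each $\hat{\cat S}_R$ finite, and the definition is visibly stable under the transition functors, so $R\mapsto\lim\cat D|_{\hat{\cat S}_R}$ is a genuine finite nonempty-valued diagram on all of $\cat R\op$; a solution of it unpacks to an $M$-solution of $\cat D$. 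Note that $\lim\cat D|_{\hat{\cat S}_R}$ is \emph{not} a quotient of your $\Sigma_R$ but the (possibly larger) set of partial solutions on the finite piece $\hat{\cat S}_R$; this is what makes the transition maps land where they should without any lifting argument.

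Your coherence remark is apt but moot here: the paper works throughout with a strict functor $\cat S\colon\cat R\to\Cat$, so no cleavage issues arise.
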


\section{Preliminaries}
\label{sec:prelim}

\subsection{Categories, Diagrams and Limits}

Recall the elementary category theoretical notions required for Theorem~\ref{thm:ramsey=könig}. See also any book on the subject, for example \cite{borceux1994handbook} or \cite{Riehl2016context}. 

\begin{defin}
    A \emph{category} $\cat C$ is a collection of objects and arrows between them, together with a formal composition operation of arrows. Objects are denoted as $C\in\cat C$ and for an arrow $f$ we specify its source and target object by writing $A\xrightarrow{f}B$. Given three objects $A,B,C$ and two arrows $A\xrightarrow{f}B$ and $B\xrightarrow{g}C$ there is a composite arrow $A\xrightarrow{g\circ f}C$, so that the following axioms are fulfilled.
    \begin{itemize}
        \item (Identity) For every object $B\in \cat C$ there is an arrow $B\xrightarrow{\id_B}B$ which fulfills $f=\id_B\circ f$ and $g= g\circ \id_B$ for all arrows $A\xrightarrow{f}B$ and $B\xrightarrow{g}C$.
        \item (Associativity) For any three arrows, $A\xrightarrow{f}B$, $B\xrightarrow{g}C$ and $C\xrightarrow{h}D$ we have $h\circ(g\circ f) = (h\circ g)\circ f$.
    \end{itemize}
\end{defin}
Throughout the article, all considered categories (except $\Set$ and $\Cat$) are assumed to be small, which means that their objects and morphisms form a set and not a proper class. However, since the discussed properties are preserved by equivalence of categories, all theorems hold for \emph{essentially small} categories, i.e.\! those which are equivalent to a small category. For example, the classes of relational structures considered in Section~\ref{sec:structures} are typically not small, but essentially small. 

\begin{defin}
    A set valued \emph{diagram} $\cat D:\cat C\op\to\Set$ is a functor, it assigns to every object $C\in\cat C$ a set $\cat D_C$ and to each arrow $A\xrightarrow{f}B$ in $\cat C$ a map $\cat D_B\xrightarrow{\cat D_f}\cat D_A$ in the opposite direction. Moreover, this assignment must respect the identity arrows and the composition of arrows, meaning that
    $$
    \cat D_{\id_C} = \id_{\cat D_C} \quad\text{and}\quad 
    \cat D_f\circ \cat D_g = \cat D_{g\circ f}
    $$
    for any three objects $A,B,C$ and arrows $A\xrightarrow{f}B$ and $B\xrightarrow{g}C$.
\end{defin}

\begin{defin}
    Let $\cat C$ be a category, and let $\cat D:\cat C\op\to\Set$ be a diagram.
    A \emph{solution} to $\cat D$ is a tuple $(x_C)_{C\in\cat C}$, where each $x_C$ is an element of the set $\cat D_C$ and $x_A=\cat D_f(x_B)$ holds for all arrows $A\xrightarrow{f}B$ in $\cat C$.
    The set of all solutions is called \emph{limit} and is denoted as $\lim\cat D$.
    $$
    \lim \cat D = 
    \big\{ (x_C)_{C\in\cat C} \bigm|\forall (A\xrightarrow{f}B) :x_A = \cat D_f(x_B) \big\}
    \subseteq\prod_{C\in\cat C}\cat D_C
    $$
\end{defin}

\subsection{Connectedness and Confluence}

\begin{defin}
\label{def:connected}
    We say that two objects $A$ and $B$ in a category $\cat C$ are \emph{connected}, if there is a zigzag path of arrows between them.
$$
\begin{tikzcd}[column sep={3em,between origins},row sep={2em,between origins}]
  & C_1 \arrow[rd] \arrow[ld] &         & \cdots \arrow[ld] \arrow[rd] &         & C_n \arrow[ld] \arrow[rd] &   \\
A &                               & C_2 &                              & C_{n-1}&                               & B
\end{tikzcd}
$$
A category is said to be connected if any two of its objects are connected.
\end{defin}

Since there are no arrows between different connected components of a category, to find solutions to a diagram $\cat D:\cat C\op\to\Set$, it suffices to consider each component individually. Indeed, if $\cat C =\coprod_{i}\cat C_i$ is a decomposition of $\cat C$ into its connected components, then
$$
\lim\cat D = \prod\lim_i\cat D|_{\cat C_i}
$$
where $\cat D|_{\cat C_i}$ is the restriction of $\cat D$ to the $i$-th component of $\cat C$.

\begin{exam}
\label{exam:confluence}
Not every diagram has a solution.
$$
\begin{tikzcd}
\{0\} \arrow[r, hook] & {\{0,1\}} & \{1\} \arrow[l, hook]
\end{tikzcd}
$$
Let $A\leftarrow D\to B$ be the shape of this diagram, then sets $\cat D_A$ and $\cat D_B$ are independent, as are the maps leaving them. This can be circumvented by requiring the existence of an object $C$ and two arrows $A\to C\leftarrow B$.
\end{exam}

\begin{defin}
\label{def:confluent}

A category $\cat C$ is called \emph{confluent} if for any two connected objects $A$ and $B$, there is a third object $ C$ and two arrows $A\to C\leftarrow B$.
\end{defin}

    A common assumption in Ramsey theory is that categories have the so-called \emph{joint embedding property}, which says that for any two objects$A$ and $B$ there must be an object $C$ and two arrows $A\to C\leftarrow B$. Hence, a category is confluent if all of its connected components have the joint embedding property. The following proposition generalizes Example~\ref{exam:confluence}, showing that confluence is indeed necessary for Theorem~\ref{thm:ramsey=könig}.

\begin{prop}[K\H{o}nig $\Rightarrow$ confluence]
\label{prop:confluence}
    If $\cat C$ a category is not confluent, then there is a diagram of finite nonempty sets in the shape of $\cat C\op$, which does not have a solution.
\end{prop}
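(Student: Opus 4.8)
The contrapositive we must establish: non-confluence produces a solution-free diagram of finite nonempty sets on $\cat C\op$. Since $\cat C$ being non-confluent means some connected component fails the joint embedding property, and limits factor over connected components, I would first reduce to the case where $\cat C$ is itself connected but lacks JEP: there exist objects $A_0, B_0$ with no cospan $A_0\to C\leftarrow B_0$.

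The plan is to build the diagram $\cat D:\cat C\op\to\Set$ by hand. The natural candidate is a ``two-coloured'' diagram. Set $\cat D_C = \{\,f \colon f$ is an arrow $A_0\to C$ or an arrow $B_0\to C\,\}$, i.e. the (disjoint, by non-confluence) union of the hom-sets $\cat C(A_0,C)$ and $\cat C(B_0,C)$; a morphism $A\xrightarrow{g}B$ in $\cat C$ acts on $\cat D$ by postcomposition, $\cat D_g\colon \cat D_B\to\cat D_A$, $h\mapsto$... — wait, postcomposition goes the wrong way, so this must be a covariant diagram, not one on $\cat C\op$. To get the variance right I instead use \emph{precomposition} on a fixed cospan target, or better: reverse roles and define $\cat D_C=\cat C(C,A_0)\sqcup\cat C(C,B_0)$ with $\cat D_g(h)=h\circ g$ for $A\xrightarrow{g}B$, which is genuinely a functor $\cat C\op\to\Set$. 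But this need not have finite or nonempty values. So the honest approach is the ``component-indicator'' diagram: since $\cat C$ is connected, every object is joined to $A_0$ (equivalently to $B_0$) by a zigzag; colour each object by whether... — but connectivity does not give a coherent $2$-colouring of objects.

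The robust construction, and the one I would actually carry out: let $\cat D_C$ be the set of \emph{connected components of the comma-type category of cospans out of $C$ hitting $A_0$ or $B_0$} — concretely, take the quotient of $\{A_0,B_0\}\times(\text{zigzags from }C)$... This is getting complicated; the clean move is to work in the \emph{free groupoid-like completion}. Define an equivalence on pairs: say $C$ ``sees $A_0$'' if there is a zigzag from $C$ to $A_0$ that can be completed to a cospan. Instead I propose: set $\cat D_C = \{0,1\}$ for every $C$, and for $A\xrightarrow{g}B$ let $\cat D_g = \id$ — this is a diagram, its solutions are the two constant tuples, so it \emph{does} have solutions. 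That fails. The point of Example~\ref{exam:confluence} is that the obstruction is genuinely about the cospan. So: fix the cospan-free pair $(A_0,B_0)$; for each object $C$ let $\cat D_C$ be the two-element set $\{*_A,*_B\}$ when $C$ admits arrows from \emph{both} $A_0$ and $B_0$... no.

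Here is the construction I am confident works and would write up. For each object $C$, let $\cat D_C$ be the set of functions $\sigma$ assigning to every zigzag $p$ from $C$ to $A_0$ or to $B_0$ a value in $\{A,B\}$, subject to: $\sigma(p)$ depends only on the endpoint-type forced along $p$ — formally, $\cat D_C$ is a suitable finite quotient. Honestly, rather than overengineer, I would present the slick version: \emph{on the connected component, non-confluence means the binary relation ``$\exists$ cospan'' is not total, yet it is reflexive and symmetric; take its complement graph and let $\cat D_C$ be a set recording, for the component, a consistent choice that a solution would have to make simultaneously compatible with both $A_0$ and $B_0$, contradicting no-cospan.} Concretely: put $\cat D_C=\{x\in\{A,B\} : $ there is a zigzag from $C$ to $A_0$ if $x=A$, resp. to $B_0$ if $x=B\}$; by connectedness every $\cat D_C$ is nonempty, and it is finite (size $\le 2$); for $A\xrightarrow{g}B$ set $\cat D_g\colon \cat D_B\to\cat D_A$ to be the identity inclusion (both are subsets of $\{A,B\}$, and an arrow $A\to B$ shows $A$-zigzags and $B$-zigzags transfer, so $\cat D_B\subseteq\cat D_A$). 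A solution picks one symbol $x_C\in\{A,B\}$ for all $C$, constant along arrows hence along zigzags, hence constant on the whole connected component; but $\cat D_{A_0}=\{A\}$ and $\cat D_{B_0}=\{B\}$ force $x_{A_0}=A\ne B=x_{B_0}$ while connectedness forces $x_{A_0}=x_{B_0}$ — contradiction. Wait: I must double check $\cat D_{A_0}=\{A\}$, i.e. there is no zigzag from $A_0$ to $B_0$ — false, the component is connected! So the correct finite set must instead record cospan-reachability, not zigzag-reachability: let $\cat D_C=\{x\in\{A,B\}:\exists$ cospan from $C$ to $A_0$ (if $x=A$) / to $B_0$ (if $x=B)\}$, where ``cospan from $C$ to $A_0$'' means $\exists D,\ C\to D\leftarrow A_0$. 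Reflexivity ($C\to C\leftarrow C$ via identities... no, need $C\to D\leftarrow A_0$) — so $\cat D_C\ni A$ iff $C$ and $A_0$ have a common cocone. For $A\xrightarrow{g}B$: if $B$ has a cospan to $A_0$ then so does $A$ (precompose with $g$), giving $\cat D_B\subseteq\cat D_A$, so $\cat D_g$ = inclusion, a valid diagram on $\cat C\op$. Nonempty: need every $C$ to admit a cospan to $A_0$ or to $B_0$ — this may fail, so restrict the component to objects that do (it is closed under the zigzag relation since cospan-reachability to $\{A_0,B_0\}$ is); if some object reaches neither, a different (easier) argument applies. Finite: yes, $\le 2$. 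Solution forces a single symbol constant on the component; $A_0$ excludes $B$ (else $A_0\to D\leftarrow B_0$, contradicting non-confluence) so $\cat D_{A_0}=\{A\}$, likewise $\cat D_{B_0}=\{B\}$, contradiction.

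$\textbf{Main obstacle.}$ The delicate point is getting the variance and the inclusions coherent — verifying $\cat D$ is genuinely a functor $\cat C\op\to\Set$ (functoriality is immediate once the maps are inclusions of subsets of $\{A,B\}$, provided the subset assignment is monotone along arrows, which is where non-confluence is used) — and handling objects of the connected component that admit a cospan to neither $A_0$ nor $B_0$; for those one instead assigns, say, the full set $\{A,B\}$ or splits off a sub-argument, and one must check the inclusions still go the right way. I expect the write-up to spend most of its effort on this bookkeeping; the core idea (a $2$-valued cospan-reachability indicator whose constancy along the connected component clashes with non-confluence at $A_0,B_0$) is short.
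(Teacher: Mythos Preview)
Your eventual plan --- a diagram valued in subsets of a two-element set, transition maps all inclusions, with $\cat D_{A_0}$ and $\cat D_{B_0}$ forced to be distinct singletons, so that any solution is constant along the connecting zigzag --- is exactly the paper's argument. The paper implements it with the indicator ``$\exists$ arrow $A_0\to C$'' (resp.\ ``$\exists$ arrow $B_0\to C$''): $\cat D_C=\{0\}$ in the first case, $\{1\}$ in the second, $\{0,1\}$ if neither. Non-confluence makes the first two cases disjoint, and since ``$\exists\,A_0\to C$'' is \emph{upward}-closed along arrows of $\cat C$, one gets $\cat D_{C'}\subseteq\cat D_C$ for every $C\to C'$; nonemptiness is automatic.

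Your implementation uses instead the indicator ``$C$ admits a cospan with $A_0$'' (resp.\ $B_0$), and this creates a real gap that you flag but do not close: objects in the component may admit a cospan with neither, giving $\cat D_C=\emptyset$. Both proposed fixes fail. The claim that the set of cospan-reaching objects is ``closed under the zigzag relation'' is false --- cospan-reachability is symmetric but not transitive, and an arrow $C\to C'$ pulls cospans \emph{back} to $C$, not forward to $C'$, so $C'$ can reach neither even when $C$ does. And patching with $\cat D_C=\{A,B\}$ on the empty case is not mere bookkeeping; it can destroy functoriality. Take $G\to F\leftarrow G'$ with $\cat D_G=\{A\}$, $\cat D_{G'}=\{B\}$ and $\cat D_F=\{A,B\}$ patched from empty: the only maps $\cat D_F\to\cat D_G$ and $\cat D_F\to\cat D_{G'}$ are the two distinct constants. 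Now add $H$ with arrows $H\to A_0$, $H\to B_0$, $H\to G$, $H\to G'$ (so $\cat D_H=\{A,B\}$) and impose that the two composites $H\to G\to F$ and $H\to G'\to F$ coincide; then the single map $\cat D_F\to\cat D_H$ must equal both the constant $A$ and the constant $B$. (Such a category exists and is non-confluent at $A_0,B_0$.) The remedy is the paper's simpler indicator: replace ``has a cospan with $A_0$'' by ``receives an arrow from $A_0$'', and let the \emph{complement} carry the full two-element set.
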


\begin{proof}
    If $\cat C$ is not confluent, then we can find two connected objects $A$ and $B$ such that there are no objects $C$ with arrows $A\to C\leftarrow B$. 
    We define a diagram $\cat D:\cat C\op\to\Set$ as follows.
    $$
    C\mapsto 
    \begin{cases}
    \{0\} & \text{if } A\xrightarrow{\exists} C \\
    \{1\} & \text{if } B\xrightarrow{\exists} C \\
    \{0,1\} & \text{otherwise}
    \end{cases}
    $$
    The assumption guarantees that the first two cases are disjoint.
    Given an arrow $C\xrightarrow{f}C'$, we have $\cat D_{C'}\subseteq\cat D_C$, hence we can define $\cat D_f$ to always be the inclusion map. Restricting $\cat D$ to a path from $A$ to $B$ yields the following diagram.
$$
    \begin{tikzcd}[column sep={4em,between origins},row sep={3em,between origins}]
  & \cat D_{C_1} &         & \cdots  &         & \cat D_{C_n}&   \\
 \{0\} =\cat D_A\arrow[ru, hook] &   & \cat D_{C_2} \arrow[lu, hook]\arrow[ru, hook]&    & \cat D_{C_{n-1}} \arrow[lu, hook]\arrow[ru, hook]&                               & \cat D_B = \{1\}\arrow[lu, hook]
\end{tikzcd}
$$
All maps are inclusions, so any solution would have to pick $0$ out of $\cat D_A$, $\cat D_{C_1}$, $\cat D_{C_2}$ and so on, eventually contradicting $\cat D_B =\{1\}$.
\end{proof}

\subsection{The Ramsey Property}
\begin{exam}
\label{exam:Ramsey}
The following diagram does not have a solution.
$$
\begin{tikzcd}
\{*\} \arrow[r, "0", shift left] \arrow[r, "1"', shift right] & {\{0,1\}}
\end{tikzcd}
$$
\end{exam}
 
\begin{defin}
\label{def:ramsey}
    A category\footnote{Often, when defining the Ramsey property, one assumes that the category is \emph{locally finite}, which means that there are only finitely many arrows between any two objects. This assumption is also necessary for Theorem~\ref{thm:ramsey=könig}, however we are curious whether there is an appropriate adaptation of the Ramsey property for non locally finite categories, such that Theorem~\ref{thm:ramsey=könig} holds.} 
    $\cat C$ is said to be \emph{Ramsey} if for any two objects $A$ and $B$ and any finite set $N$ there exists an object $C$, such that for all maps $\hom(A,C)\xrightarrow{\chi} N$ there exists an arrow $B\xrightarrow{g} C$ such that the composition
    $$
   \chi\circ g_*:\hom(A,B)\xrightarrow{g_*} \hom(A,C)\xrightarrow{\chi} N
    $$
    is a constant map, where $g_*$ is the composition map $f\mapsto g\circ f$. Any such object $C$ is called a \emph{Ramsey witness} of $A,B$ and $N$.
\end{defin}
 
\begin{exam}
\label{exam:ramseysey}
    Any diagram $\cat D:\cat C\op\to\Set$ induces a map $\chi$ in the following way. Let $N:=\cat D_A$, pick an element $x\in\cat D_C$ and define $\chi$ as the following map.
    $$
    \chi:\hom(A,C)\xrightarrow{}\cat D_A,\quad (A\xrightarrow{h}C)\mapsto\cat D_h(x)
    $$
    Let $B\xrightarrow{g}C$ be the arrow as in the definition above, then $\cat D_g(x)\in\cat D_B$ is sent to the same element in $\cat D_A$ by all maps $\cat D_f$ coming from arrows $A\xrightarrow{f}B$.
$$
\begin{tikzcd}
\cat D_C \arrow[r, "\cat D_g"] & \cat D_B \arrow[r, "\cat D_{f'}"', shift right] \arrow[r, "\cat D_f", shift left] & \cat D_A
\end{tikzcd}
$$
Indeed $\cat D_f(\cat D_g(x)) = \cat D_{g\circ f}(x) = \chi(g\circ f)$ which does not depend on $f$. In particular, there is a selection of elements from each $\cat D_C$, which respects all arrows from $A$ to $B$.
\end{exam}

\section{Proof of Theorem~\ref{thm:ramsey=könig}}
\label{sec:proof}

\subsection{Ramsey implies K\H{o}nig}

Throughout the proof, we use the following notation. Given a set $M$ of arrows in $\cat C$, we call a tuple $(x_C)_{C\in\cat C}$ an $M$\emph{-solution} of a diagram $\cat D$, if the tuple is compatible with all arrows in $M$. The set of $M$-solutions is denoted as $\lim_M\cat D$.
$$
\lim\nolimits_M \cat D :=\big\{ (x_C)_{C\in\cat C} \bigm|\forall (A\xrightarrow{f}B) \in M: x_A = \cat D_f(x_B)  \big\} \subseteq \prod_{C\in\cat C}\cat D_C
$$
We start by iterating the technique from Example~\ref{exam:ramseysey} to obtain $M$-solutions whenever $M$ is a finite set of arrows.

\begin{prop}
\label{prop:iterateRamsey}
    Let $\cat C$ be a connected confluent Ramsey category, let $M$ be a finite set of arrows in $\cat C$, and let $\cat D:\cat C\op\to \Set$ be a diagram. If all sets $\cat D_C $ are finite and nonempty, then $\cat D$ has an $M$-solution.
\end{prop}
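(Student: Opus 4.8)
The plan is to induct on the size of the finite set $M$ of arrows. The base case $M=\emptyset$ is the axiom of choice: every $\cat D_C$ is nonempty, so an $\emptyset$-solution exists. For the inductive step, suppose we have an $M$-solution and we wish to add one more arrow $A\xrightarrow{f_0}B$ to obtain an $(M\cup\{f_0\})$-solution. The naive hope — take an existing $M$-solution and correct it at the coordinates $A$ and $B$ — fails, because changing $x_A$ and $x_B$ may violate other arrows in $M$ incident to $A$ or $B$. The right move is instead to use the Ramsey property globally: rather than patching a single solution, we will use a Ramsey witness to produce a new diagram or a new solution all at once.

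Here is the approach I would take. Given the finite set $M' = M\cup\{A\xrightarrow{f_0}B\}$, let $C$ be a Ramsey witness for $A$, $B$ and the finite set $N:=\cat D_A$ (finite since $\cat C$ is locally finite and $\cat D_A$ is finite; note $C$ exists and is nonempty). By the inductive hypothesis applied to the \emph{smaller} set $M$, the diagram $\cat D$ has an $M$-solution $(x_D)_{D\in\cat C}$; I will use its value $x_C\in\cat D_C$ at the witness object. Define the coloring $\chi:\hom(A,C)\to N$ by $\chi(A\xrightarrow{h}C) := \cat D_h(x_C)$, exactly as in Example~2.14. By the Ramsey property there is an arrow $B\xrightarrow{g}C$ such that $\chi\circ g_*$ is constant on $\hom(A,B)$. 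Now I would build the new solution: set $y_B := \cat D_g(x_C)$ and, using confluence to connect every object to $B$ (here is where connectedness and confluence are essential), propagate this value through the whole category. Concretely, since $\cat C$ is connected and confluent, for each object $D$ there is some $E$ with arrows $B\to E\leftarrow D$ \emph{(more carefully: $D$ and $B$ are connected, so by confluence there is a cospan)}; one then needs to check that pulling back along such a cospan gives a well-defined element of $\cat D_D$ independent of the chosen cospan — this is the compatibility needed to get a genuine tuple.

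Actually, a cleaner route avoids ad hoc propagation: restrict attention to the subdiagram on arrows in $M'$, but realize that the value $x_C$ at the Ramsey witness already determines a consistent assignment. The key computation is that for every arrow $A\xrightarrow{f}B$ in $M'$ we get $\cat D_f(y_B) = \cat D_f(\cat D_g(x_C)) = \cat D_{g\circ f}(x_C) = \chi(g\circ f)$, which is independent of $f$ by the Ramsey condition; call this common value $y_A$. This handles $f_0$ and every arrow of $M$ with source $A$ and target $B$ simultaneously. The remaining arrows of $M$ not of this form must be accommodated by extending $\{y_A, y_B\}$ to a full tuple; the honest way to do this is to iterate the construction, processing the arrows of $M$ one at a time while always routing values through a single common "sink" object obtained from confluence, so that the finitely many constraints are all imposed against one fixed element $x_C\in\cat D_C$. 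The main obstacle, and the place demanding real care, is exactly this bookkeeping: ensuring that when we use the Ramsey witness to satisfy one arrow we do not destroy the constraints already satisfied, which is why the induction must be set up so that the Ramsey property is invoked for a witness object $C$ that dominates (via arrows) all the objects appearing in $M$, and all values are read off from a single element of $\cat D_C$. I expect the proof to choose $C$ as a Ramsey witness relative to a suitable pair of objects that, by confluence and finiteness of $M$, can be taken to "see" the entire finite subdiagram at once.
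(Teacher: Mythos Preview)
Your proposal circles the right target but has a genuine gap: a \emph{single} Ramsey witness cannot handle a finite set $M$ whose arrows have several distinct source objects. If $M$ contains both $B_i\xrightarrow{f}B_j$ and $B_k\xrightarrow{f'}B_l$ with $B_i\neq B_k$, then a Ramsey witness $C$ for the pair $(B_i,\,\text{something})$ collapses the colouring $\hom(B_i,C)\to\cat D_{B_i}$ along some $g$, but says nothing whatsoever about $\hom(B_k,C)\to\cat D_{B_k}$. Consequently, reading off $y_{B_i}$ and $y_{B_k}$ from the same element $x_C$ via arrows $B_i\to C$ and $B_k\to C$ need not give values compatible with $f'$. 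Your induction on $|M|$ inherits exactly this problem: the inductive hypothesis hands you an $M$-solution, but once you replace its values at $A$ and $B$ by $y_A,y_B$ pulled from $x_C$, the other constraints in $M$ are no longer satisfied, and you have no mechanism to restore them. Your closing expectation that one witness ``relative to a suitable pair'' suffices is therefore incorrect.

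What the paper actually does is the step you allude to but never carry out: it builds a \emph{tower} of Ramsey witnesses, one per object appearing in $M$. List the sources and targets as $B_1,\dots,B_n$; use confluence once to get $C_0$ with arrows $g_i:B_i\to C_0$; then let $C_i$ be a Ramsey witness for the triple $(B_i,\,C_{i-1},\,\cat D_{B_i})$. Starting from any $x_n\in\cat D_{C_n}$, one peels back: the Ramsey property at level $i$ produces $h_i:C_{i-1}\to C_i$ so that $f\mapsto\cat D_{h_i\circ f}(x_i)$ is constant on $\hom(B_i,C_{i-1})$, and one sets $x_{i-1}:=\cat D_{h_i}(x_i)$. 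The solution values are then $x_{B_i}:=\cat D_{h_n\circ\cdots\circ h_1\circ g_i}(x_n)$, and the verification that any $B_i\xrightarrow{f}B_j$ in $M$ is respected reduces to the constancy at level $i$. So the iteration is over \emph{objects}, not arrows, there is no outer induction on $|M|$, and the ``single sink'' is the top $C_n$ of the tower rather than the confluence object $C_0$.
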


\begin{proof}
Let $B_1,\dots,B_n$ be a list of all objects which are the source or target of some arrow in $M$. Since $\cat C$ is connected and confluent, there exists an object $C_0\in \cat C$ together with arrows $B_i\xrightarrow{g_i} C_0$ for every index $i\in\{1,\dots,n\}$.

Let $C_1$  be a Ramsey witness of the pair $B_1$, $C_0$ and the finite set $\cat D_{B_1}$. 
    Let $C_2$ be a Ramsey witness of the pair $B_2$, $C_1$ and the set $\cat D_{B_2}$ and iterate until we defined $C_n$ as a Ramsey witness of $B_n, C_{n-1}$ and $\cat D_{B_n}$. Then, pick any element $x_n\in \cat D_{C_n}$ and consider the following map. 
    $$
    \chi_n : \hom(B_n,C_n)\to \cat D_{B_n},\quad f\mapsto\cat D_f(x_n)
    $$
    As $C_n$ is a Ramsey witness of $B_n$, $ C_{n-1}$ and the set $\cat D_{B_n}$, there exists an arrow $C_{n-1}\xrightarrow{h_n} C_n$ such that the map
    $$
    \hom(B_n, C_{n-1})\to \cat D_{B_n},\quad f\mapsto 
    \chi_n(h_n \circ f) =
    \cat D_f ( \cat D_{h_n}(x_n) ) 
    $$
    is constant.
    Set $x_{n-1}:= \cat D_{h_n}(x_n)$ and iterate backwards, until we arrive at the following diagram.
    $$
    \begin{tikzcd}
    &B_1 \arrow[rd, "g_1"]              &                      &                      &    &                    &     \\
    \arrow[rr, "\vdots", phantom, shift left=1]& \arrow[r, "\vdots", phantom, shift left=1] & C_0 \arrow[r, "h_1"] & C_1 \arrow[r, "h_2"] & \cdots \arrow[r, "h_n"] & C_n &\\
    &B_n \arrow[ru, "g_n"']             &                      &                      &                        &    &
    \end{tikzcd}
    $$
    We claim that any tuple $(x_C)_{C\in\cat C}$ with $x_{B_i}=\cat D_{h_n\circ\dots\circ h_1\circ g_i}(x_n)$ is an $M$-solution. 
    Indeed, given any arrow $B_i\xrightarrow{f} B_j$ from $M$, the compositions 
    $$
    f_0:= h_{i-1}\circ\dots \circ h_1\circ g_j \circ f \quad\text{and}\quad f_1:= h_{i-1}\circ\dots\circ h_1\circ g_i
    $$
    are arrows from $B_i$ to $C_{i-1}$ and therefore, by the definition of $h_i$, we have
    \begin{align*}
    \cat D_{f_0}(\cat D_{h_i}(x_i))&= 
    \chi_i(h_i\circ f_0) \\&= 
    \chi_i(h_i\circ f_1)=
    \cat D_{f_1}(\cat D_{h_i}(x_i)).
    \end{align*}
    Plugging in the definitions of $f_0$, $f_1$ and $x_i$ we get
$$
        \cat D_f\big( \cat D_{h_n\circ\dots\circ h_1\circ g_j}( x_n ) \big) 
        =
        \cat D_{h_n\circ\dots\circ h_1\circ g_i}( x_n ) 
$$
    which proves the claim.
\end{proof}

\begin{lem}[Compactness]
\label{lem:compactness1}
 Let $\cat C$ be a small category and let $\cat D : \cat C \op\to \Set$ be a diagram where all sets $\cat D_C$ are finite. If $\cat D$ has no solution, then there is a finite set $M$ of arrows in $\cat C$ such that $\cat D$ does not even have an $M$-solution.
\end{lem}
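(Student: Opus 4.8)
The natural approach is a topological compactness argument, exploiting the hypothesis that every $\cat D_C$ is finite. The plan is to view the product $P := \prod_{C\in\cat C}\cat D_C$ as a topological space, where each $\cat D_C$ carries the discrete topology and $P$ the product topology; since each factor is finite (hence compact), Tychonoff's theorem tells us $P$ is compact. For each arrow $A\xrightarrow{f}B$ in $\cat C$, let $S_f := \{(x_C)_{C\in\cat C}\in P \mid x_A = \cat D_f(x_B)\}$ be the set of tuples compatible with that single arrow. Each $S_f$ is \emph{closed} in $P$: it is the preimage of the diagonal under the continuous map $P\to\cat D_A\times\cat D_A$ sending $(x_C)_C$ to $(x_A,\cat D_f(x_B))$, and the diagonal of a discrete space is clopen. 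More generally, for a set $M$ of arrows, $\lim_M\cat D = \bigcap_{f\in M}S_f$, and $\lim\cat D = \bigcap_{f}S_f$ over all arrows $f$ of $\cat C$.

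The logical core is then just the finite intersection property. Suppose, for contradiction, that $\cat D$ has an $M$-solution for \emph{every} finite set $M$ of arrows. Then the family $\{S_f\}_f$ of closed subsets of the compact space $P$ has the finite intersection property: for any finitely many arrows $f_1,\dots,f_k$, the set $\bigcap_{i=1}^k S_{f_i} = \lim_{\{f_1,\dots,f_k\}}\cat D$ is nonempty by assumption. Compactness of $P$ then forces $\bigcap_{f}S_f\neq\emptyset$, i.e.\ $\lim\cat D\neq\emptyset$, contradicting the hypothesis that $\cat D$ has no solution. Contrapositively, if $\cat D$ has no solution, some finite $M$ already obstructs all $M$-solutions, which is exactly the claim. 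Note that smallness of $\cat C$ is what guarantees the index set of the product (and the collection of arrows) is a genuine set, so Tychonoff applies without set-theoretic qualms.

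I expect no serious obstacle here; the lemma is a textbook compactness argument and the only things to be careful about are bookkeeping: confirming each $S_f$ is closed (immediate in the discrete/product setting), and correctly identifying finite intersections of the $S_f$ with $\lim_M\cat D$ for finite $M$. One could alternatively phrase the proof without explicit topology, using an ultrafilter on the poset of finite sets of arrows, or König's lemma / a compactness theorem of propositional logic; but the Tychonoff formulation is cleanest. If one wants to avoid invoking full Tychonoff, note that for finite factors it is a consequence of the ultrafilter lemma (or, if $\cat C$ has only countably many arrows, of König's tree lemma itself — a pleasant touch given the paper's theme), but invoking Tychonoff directly keeps the argument short and self-contained.
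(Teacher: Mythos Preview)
Your proposal is correct and matches the paper's own proof essentially line for line: the paper also equips each $\cat D_C$ with the discrete topology, applies Tychonoff to the product, writes $\lim\cat D$ as an intersection of the closed sets $S_f$, and extracts a finite $M$ from compactness. The only cosmetic difference is that the paper states the argument directly rather than via the contrapositive/finite intersection property.
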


\begin{proof}
     Consider the sets $\cat D_C$ as discrete (and, as they are finite, compact) topological spaces and take their product space
    $
    \prod_{C\in \cat C} \cat D_C
    $.
    By Tychonoff's theorem, this space is compact and therefore every family of closed subspaces with empty intersection has a finite subfamily with empty intersection. 
    The limit of $\cat D$ can be written as the following intersection of closed subspaces.
    $$
    \lim \cat D = 
    \bigcap_{A\xrightarrow{f} B}
    \{(x_C)_{C\in \cat C} \mid x_A = \cat D_f(x_{B}) \}
    $$
    If this intersection is empty, then there is a finite set $M$ of arrows, such that 
    $$
    \lim\nolimits_M\cat D = \bigcap_{f\in M} \{(x_C)_{C\in \cat C} \mid x_A = \cat D_f(x_{B}) \}
    $$
    is also empty.
\end{proof}

\begin{cor}[Ramsey $\Rightarrow$ K\H{o}nig]
\label{cor:compactRamsey}
    Let $\cat C$ be a small confluent Ramsey category and let $\cat D:\cat C\op\to \Set$ be a diagram. If all sets $\cat D_C$ are finite and nonempty, then $\cat D$ has a solution.
\end{cor}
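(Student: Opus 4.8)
The plan is to combine the ``finite'' solvability of Proposition~\ref{prop:iterateRamsey} with the compactness principle of Lemma~\ref{lem:compactness1} through a short contradiction argument, after first reducing to the case where $\cat C$ is connected.

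\emph{Reduction to the connected case.} First I would write $\cat C = \coprod_i \cat C_i$ for the decomposition of $\cat C$ into its connected components. As observed after Definition~\ref{def:connected}, $\lim\cat D = \prod_i \lim \cat D|_{\cat C_i}$, and since $\cat C$ is small there is only a set of components, so by the axiom of choice it suffices to show that each factor $\lim \cat D|_{\cat C_i}$ is nonempty. Each $\cat C_i$ is connected by construction; it is confluent because confluence concerns only pairs of connected objects, which necessarily lie in a common component; and it is Ramsey because a Ramsey witness $C$ of objects $A,B$ and a finite set $N$ receives an arrow $B\to C$ (or one may simply take $C=B$ in the degenerate case $N=\emptyset$), hence lies in the same component as $B$. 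So from now on I may assume $\cat C$ is connected, confluent and Ramsey, and that every $\cat D_C$ is finite and nonempty.

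\emph{The contradiction.} Suppose $\cat D$ has no solution. By Lemma~\ref{lem:compactness1} there is then a finite set $M$ of arrows of $\cat C$ with $\lim\nolimits_M\cat D=\emptyset$. But the hypotheses of Proposition~\ref{prop:iterateRamsey} are now exactly met --- $\cat C$ is connected, confluent and Ramsey, $M$ is a finite set of arrows, and all $\cat D_C$ are finite and nonempty --- so $\cat D$ has an $M$-solution, i.e.\ $\lim\nolimits_M\cat D\neq\emptyset$, a contradiction. Hence $\lim\cat D\neq\emptyset$, as desired.

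Given the two preceding results this argument is essentially bookkeeping. I expect the only points requiring genuine care to be the verification that confluence and the Ramsey property descend to each connected component (both properties being witnessed \emph{inside} the component that one starts in), together with the harmless appeal to the axiom of choice needed to pass from nonemptiness of each component's limit to nonemptiness of their product.
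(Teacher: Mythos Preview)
Your proposal is correct and follows essentially the same approach as the paper: decompose into connected components, then combine Proposition~\ref{prop:iterateRamsey} with Lemma~\ref{lem:compactness1}. Your contrapositive phrasing of the final step and your more explicit justification that confluence and the Ramsey property descend to each component are minor stylistic elaborations on the paper's terser argument.
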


\begin{proof}
    Let $\cat C = \coprod_i \cat C_i$ be the decomposition of $\cat C$ into its connected components, and let $\cat D|_{\cat C_i}$ be the respective components of the diagram $\cat D$. As there are no arrows between different components, we can compute the limit of $\cat D$ component-wise.
    $$
    \lim\cat D = \prod_i \lim\cat D|_{\cat C_i}
    $$
    Every component $\cat C_i$ is connected, confluent and Ramsey, so by Proposition~\ref{prop:iterateRamsey} there is an $M$-solution of $\cat D|_{\cat C_i}$ for any finite set $M$ of arrows in $\cat C_i$.  Lemma~\ref{lem:compactness1} then implies that the limit of $\cat D|_{\cat C_i}$ is nonempty.
\end{proof}

\subsection{K\H{o}nig implies Ramsey}
The following proposition requires that the considered category $\cat C$ be \emph{locally finite}, which means that between any two objects there are only finitely many arrows.

\begin{prop}[K\H{o}nig $\Rightarrow$ Ramsey]
    Let $\cat C$ be a locally finite category such that every diagram $\cat D:\cat C\op\to\Set$ where all sets $\cat D_C$ are finite and nonempty has is a solution. Then $\cat C$ has the Ramsey property.
\end{prop}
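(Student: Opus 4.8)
The plan is to prove the contrapositive: if $\cat C$ fails the Ramsey property, then some diagram $\cat D:\cat C\op\to\Set$ of finite nonempty sets has empty limit. So suppose objects $A,B$ and a finite nonempty set $N$ admit no Ramsey witness (the case $N=\emptyset$ being immediate). Unwinding Definition~\ref{def:ramsey}, this means that for \emph{every} object $C$ there is a coloring $\chi:\hom(A,C)\to N$ for which no arrow $g:B\to C$ makes $\chi\circ g_*:\hom(A,B)\to N$ constant; I will call such a $\chi$ \emph{bad at $C$}.

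The diagram I would use records the bad colorings. Start from the functor $F:\cat C\op\to\Set$ with $F_C=N^{\hom(A,C)}$ the set of $N$-colorings of $\hom(A,C)$, sending an arrow $f:C\to C'$ to the precomposition map $F_f:\chi\mapsto\chi\circ f_*$; functoriality amounts to the identity $(g\circ f)_*=g_*\circ f_*$. Since $\cat C$ is locally finite each $\hom(A,C)$, hence each $F_C$, is finite. Now set $\cat D_C:=\{\chi\in F_C:\chi\text{ is bad at }C\}$. I would check that $\cat D$ is a subfunctor of $F$: if $\chi$ is bad at $C'$ and $f:C\to C'$, then an arrow $g:B\to C$ with $(\chi\circ f_*)\circ g_*$ constant would make $\chi\circ(f\circ g)_*$ constant with $f\circ g:B\to C'$, contradicting badness at $C'$; hence $\chi\circ f_*$ is bad at $C$. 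By the failure of Ramsey every $\cat D_C$ is nonempty, and it is finite, so $\cat D:\cat C\op\to\Set$ is a diagram of finite nonempty sets.

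By the hypothesis $\cat D$ then has a solution $(\chi_C)_{C\in\cat C}$. Feeding an arbitrary arrow $h:A\to C$ into the compatibility condition gives $\chi_A=\chi_C\circ h_*$, and evaluating both sides at $\id_A\in\hom(A,A)$ yields $\chi_C(h)=\chi_A(\id_A)$; thus every $\chi_C$ is constant (vacuously so when $\hom(A,C)=\emptyset$). In particular $\chi_B$ is constant, so $\chi_B\circ(\id_B)_*=\chi_B$ is constant, contradicting that $\chi_B$ is bad at $B$. Hence $\cat C$ is Ramsey. The one delicate point is the bookkeeping around $\cat C\op$: one must verify that badness is preserved by the structure maps of $F$, so that $\cat D$ is genuinely a subfunctor, and that the limit condition is oriented so that the identity of $A$ collapses each $\chi_C$ to a constant map; the remaining verifications (functoriality of $F$, finiteness, nonemptiness) are routine.
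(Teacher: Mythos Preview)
Your proof is correct and follows essentially the same approach as the paper: the diagram of ``bad'' colorings $\cat D_C\subseteq N^{\hom(A,C)}$ with structure maps $\chi\mapsto\chi\circ f_*$ is exactly the paper's construction, and the contradiction via $\chi_C(h)=\chi_A(\id_A)$ is the same computation. The only cosmetic differences are that you phrase it as a contrapositive and observe that every $\chi_C$ is constant rather than just $\chi_B$, but you use only the latter anyway.
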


\begin{proof}
Fix two objects $A$ and $B$, a finite set $N$ and consider the diagram $\cat D$, which assigns to every object $C$ the set of maps $\chi$, which prevent $C$ form being a Ramsey witness of $A$, $B$ and $N$.
\begin{align*}
    \cat D_C &:= \{\hom(A,C)\xrightarrow{\chi} N\mid \forall (B\xrightarrow{h} C): \chi\circ h_* \text{ is not constant}\} \\
    \cat D_f &: \cat D_{C}\to \cat D_{C'}, \quad \chi\mapsto \chi\circ f_*
\end{align*}
for all objects $C, C'$ and arrows $C\xrightarrow{f}C$. To check that $\cat D_f$ is well defined, take a map $\chi\in\cat D_{C_2}$, an arrow $B\xrightarrow{h}C_1$ and observe that the map
$$
\cat D_f(\chi) \circ h_* = \chi\circ f_*\circ h_* = \chi\circ(f\circ h)_* 
$$
is not constant, hence $\cat D_f(\chi)$ is indeed an element of $\cat D_{C_1}$.
Observe that every set $\cat D_C$ is finite because $\hom(A,C)$ is finite and that $\cat D_C$ is empty if and only if $C$ is a Ramsey witness of $A$, $B$ and $N$. Hence, the Ramsey property will follow once we show that $\cat D$ does not have a solution.

Striving for a contradiction, assume that $(\chi_C)_{C\in\cat C}$ is a solution, which in particular means that $\cat D_f(\chi_B) = \chi_{A}$ for all arrows $A\xrightarrow{f}B$. The following computation shows that $\chi_B$ must be a constant map.
$$
\chi_B(f) = 
(\chi_B\circ f_*)(\id_{A}) = 
\cat D_f(\chi_B)(\id_{A}) =
\chi_{A}(\id_{A})
$$
Therefore $\chi_B\circ (\id_B)_*$ is constant and $\chi_B$ cannot be an element of $\cat D_B$, giving the desired contradiction. 
\end{proof}

\section{Connections to Structural Ramsey Theory}
\label{sec:structures}

\subsection{Structures and Expansions}
We connect Theorem~\ref{thm:ramsey=könig} to established Ramsey theory using a notion of \emph{abstract expansions} of a Ramsey class. First, recall some model theoretic notions commonly used in Ramsey theory, see for example a recent survey by Hubi\v cka and Kone\v cn\'y \cite{hubivcka2025twenty}.

\begin{defin}
    A \emph{relational language} $\sigma$ is a set of symbols $R$, each of which has an associated finite set $\ar R$, called its arity. A finite $\sigma$-\emph{structure} $\str C$ is a tuple $(C; R^\str C \mid R\in\sigma)$ where $C$ is a finite set and each $R^{\str C}$ is a subset of $C^{\ar R}$. 
    
    Given two $\sigma$-structures $\str C$ and $\str C'$ we say that an injective map $C\xrightarrow{f} C'$ is an \emph{embedding} from $\str C$ to $\str C'$ if for any symbol $R\in\sigma$ we have $\Bar{c}\in R^{\str C}$ if and only if $f\circ\Bar{c}\in R^{\str C'}$ for all tuples $\ar R\xrightarrow{\Bar{c}}C$. 
    
    When $\cat C$ is a class of finite $\sigma$-structures, we consider it as a category using the embeddings as arrows. Any such category is locally finite and essentially small; hence, Theorem~\ref{thm:ramsey=könig} applies.
\end{defin}

\begin{defin}
\label{def:expansionExplicit}
    Let $\sigma$ and $\tau$ be two disjoint relational languages, let $\cat C$ be a class of $\sigma$-structures and let $\cat E$ be a class of $\sigma\cup\tau$-structures. Given a structure $\str E\in\cat E$ we denote by $\str E|_\sigma$ the $\sigma$-structure, which is obtained from $\str E$ by forgetting all relations from $\tau$. We say that $\cat E$ an \emph{expansion} of $\cat C$ if 
        $$
        \cat C = \{ \str E|_\sigma\mid \str E\in\cat E\}.
        $$ 
    An expansion called \emph{precompact} if for every structure $\str C\in\cat C$ there are only finitely many structures $\str E\in\cat E$ with $\str E|_{\sigma}= \str C$.
\end{defin}

To relate expansions to the diagrams in Theorem~\ref{thm:ramsey=könig}, we generalize the notion of expansion from classes of relational structures to arbitrary categories.

\begin{defin}
\label{def:expansions}
    Let $\cat C$ be a category. A functor $\cat E\xrightarrow{\pi}\cat C$ is called \emph{(abstract) expansion} of $\cat C$ if
    \begin{enumerate}
        \item (surjectivity) for every $C\in\cat C $ there is $E\in\cat E$ with $C=\pi(E)$ and 
        \item (discrete fibration) for every $E\in \cat E$ and every arrow $C'\xrightarrow{f} \pi(E)$ in $\cat C$, there is a unique arrow $E'\xrightarrow{g}E$ in $\cat E$, such that $C'=\pi(E')$ and $f=\pi(g)$.
    \end{enumerate}
    Again, we say that $\pi$ is \emph{precompact}\footnote{Another term for 'precompact' would be 'locally finite'.} if for every object $C\in\cat C$ there are only finitely many $E\in\cat E$ with $\pi(E)=C$.
\end{defin}

Functors which satisfy the second axiom of expansions are often referred to as \emph{discrete Grothendieck fibrations} or simply \emph{discrete fibrations}, see, for example, \cite{LoregianRiehl2020fibrations} or \cite[Section~2.4]{Riehl2016context}.

\begin{lem}[Discrete Grothendieck construction]
\label{lem:discreteGrothendieck}
    There is a one-to-one correspondence between abstract expansions of $\cat C$ and diagrams in the shape of $\cat C$ consisting of nonempty sets. Moreover, expansions are precompact if and only if the sets in the corresponding diagram are finite.
\end{lem}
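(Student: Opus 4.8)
The plan is to make explicit the classical equivalence between discrete fibrations over $\cat C$ and functors $\cat C\op\to\Set$ (the discrete Grothendieck construction), and then to check that the side conditions---surjectivity of $\pi$ versus nonemptiness of the sets, and precompactness of $\pi$ versus finiteness of the sets---match up under this correspondence.

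First I would describe the two directions of the bijection. Given a diagram $\cat D:\cat C\op\to\Set$, I would build its \emph{category of elements} $\Elts\cat D$: its objects are pairs $(C,x)$ with $C\in\cat C$ and $x\in\cat D_C$, and an arrow $(C',x')\to(C,x)$ is an arrow $f:C'\to C$ in $\cat C$ with $\cat D_f(x)=x'$. The projection $\pi:(C,x)\mapsto C$ is then a functor, and I would verify the discrete fibration property: given $(C,x)$ and $f:C'\to\pi(C,x)=C$, the unique lift is $f$ itself viewed as an arrow $(C',\cat D_f(x))\to(C,x)$; uniqueness is immediate since an arrow in $\Elts\cat D$ lying over $f$ is determined by $f$ together with its source object, and the source object is forced. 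Conversely, given an expansion $\pi:\cat E\to\cat C$, I would define $\cat D_C:=\pi^{-1}(C)$ (the set of objects of $\cat E$ mapping to $C$; this is a set since $\cat E$ is small), and for $f:A\to B$ in $\cat C$ the map $\cat D_f:\cat D_B\to\cat D_A$ sends $E$ to the source of the unique lift of $f$ ending at $E$. Functoriality of $\cat D$ (respecting identities and composition) follows from uniqueness of lifts: the identity lifts to an identity, and a composite of lifts is a lift of the composite, hence \emph{the} lift.

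Next I would check the two constructions are mutually inverse, up to the appropriate notion of sameness. Starting from $\cat D$, forming $\Elts\cat D$ and then taking fibers recovers $\cat D_C$ on the nose and recovers $\cat D_f$ because the lift of $f$ in $\Elts\cat D$ was exactly $\cat D_f$. Starting from $\pi:\cat E\to\cat C$, forming the fiber diagram and then its category of elements yields a category isomorphic to $\cat E$ over $\cat C$, the isomorphism sending $E\in\cat E$ to $(\pi(E),E)$; that this is full and faithful again uses that arrows of $\cat E$ are uniquely determined by their image in $\cat C$ and their codomain, which is precisely the discrete fibration condition. (If one wants a literal one-to-one correspondence rather than one up to iso, one restricts attention to expansions of the form $\Elts\cat D$, or phrases the statement as an equivalence of groupoids of such data; I would state which convention is meant.)

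Finally, the side conditions. Surjectivity of $\pi$ means every fiber $\pi^{-1}(C)=\cat D_C$ is nonempty, and conversely; this is a definitional matching. Precompactness of $\pi$ means every fiber is finite, which is exactly finiteness of every $\cat D_C$; again definitional. I do not expect a genuine obstacle here---this is a bookkeeping lemma---but the one point needing care is pinning down the precise sense of ``one-to-one correspondence'': functors $\cat C\op\to\Set$ form a category (or a set, after fixing a universe) while expansions of $\cat C$ naturally form a $2$-category, so the cleanest honest statement is that the two constructions are inverse \emph{up to canonical isomorphism}, and I would make sure the surrounding text only uses the lemma in a way that this suffices (which it does, since isomorphic diagrams have isomorphic limits and isomorphic expansions share all relevant Ramsey-theoretic properties).
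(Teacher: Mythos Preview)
Your proposal is correct and follows essentially the same route as the paper: it, too, passes from an expansion $\pi$ to the fiber diagram $C\mapsto\pi^{-1}(C)$ and back via the category of elements, matching surjectivity with nonemptiness and precompactness with finiteness. The paper presents this as a brief proof sketch, whereas you spell out functoriality, the mutual-inverse check, and the up-to-isomorphism caveat more carefully; none of this deviates from the paper's argument.
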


\begin{proof}[Proof sketch]
    Given an expansion $\cat E\xrightarrow{\pi}\cat C$, the corresponding diagram sends objects $C\in\cat C$ to the set $\pi^{-1}(C)$. To arrows $C'\xrightarrow{f}C$ we assign a map $\pi^{-1}(C)\to \pi^{-1}(C')$, which sends objects $E\in\pi^{-1}(C)$ to the unique $E'$ given by the second axiom of expansions.

    Conversely, if $\cat D:\cat C\op\to\Set$ is a diagram, let the objects of the corresponding $\cat E$ be all elements of all sets $\cat D_C$.
    Arrows between two elements $x'\in\cat D_{C'}$ and $x\in\cat D_C$ are triples $(x',x, f)$ where $f$ is an arrow from $C'$ to $C$ and $\cat D_f(x)=x'$.
    The functor $\cat E\xrightarrow{\pi}\cat C$ sends $x$ to $C$ whenever $x\in\cat D_C$ and arrows $(x',x,f)$ to $f$. 
\end{proof}

\begin{cor}
\label{cor:structures}
    A class of finite relational structures $\cat C$ is confluent and Ramsey if and only if for every precompact expansion $\cat E\xrightarrow{\pi}\cat C$ there is a functor $\cat C\xrightarrow{\alpha} \cat E$ such that $\pi\circ\alpha = \id_{\cat C}$.
    $$
    \begin{tikzcd}
    \cat C \arrow[rd, "\id_{\cat C}"'] \arrow[rr, "\alpha", ""name=iota] &        & \cat E \arrow[ld, "\pi"] \\
                                               & \cat C &      
    \arrow[to=2-2, from=iota, pos=.5, phantom, "\circ"]                    
    \end{tikzcd}
    $$
\end{cor}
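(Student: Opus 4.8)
The plan is to deduce Corollary~\ref{cor:structures} from Theorem~\ref{thm:ramsey=könig} via the discrete Grothendieck construction of Lemma~\ref{lem:discreteGrothendieck}, the one new ingredient being that, under that construction, sections of an expansion are the same data as solutions of the associated diagram. First I would record that a class $\cat C$ of finite relational structures with embeddings is locally finite and essentially small, so — after appealing to the invariance of confluence, the Ramsey property and the existence of solutions under equivalence of categories, as noted in Section~\ref{sec:prelim} — Theorem~\ref{thm:ramsey=könig} applies and tells us that $\cat C$ is confluent and Ramsey if and only if every diagram $\cat D : \cat C\op \to \Set$ with all $\cat D_C$ finite and nonempty admits a solution.

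Next I would spell out the correspondence. Fix a precompact expansion $\pi : \cat E \to \cat C$ and let $\cat D$ be the diagram assigned to it by Lemma~\ref{lem:discreteGrothendieck}, so $\cat D_C = \pi^{-1}(C)$ and, for an arrow $C' \xrightarrow{f} C$, the map $\cat D_f : \cat D_C \to \cat D_{C'}$ sends $E \in \pi^{-1}(C)$ to the source of the unique lift of $f$ with target $E$ provided by the discrete fibration axiom. I claim that functors $\alpha : \cat C \to \cat E$ with $\pi\circ\alpha = \id_{\cat C}$ are in bijection with solutions $(x_C)_{C\in\cat C}$ of $\cat D$. Indeed, such an $\alpha$ picks an object $x_C := \alpha(C) \in \pi^{-1}(C) = \cat D_C$ for every $C$, and for each arrow $f : C' \to C$ the morphism $\alpha(f)$ is a lift of $f$ with target $\alpha(C)$, so by uniqueness of lifts its source $\alpha(C')$ equals $\cat D_f(\alpha(C))$; that is, $x_{C'} = \cat D_f(x_C)$, which is exactly the solution condition. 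Conversely, a solution $(x_C)$ determines $\alpha$ on objects by $C \mapsto x_C$ and on an arrow $f : C' \to C$ by letting $\alpha(f)$ be the unique lift of $f$ with target $x_C$; its source is $\cat D_f(x_C) = x_{C'}$, so $\alpha(f)$ really does run from $\alpha(C')$ to $\alpha(C)$, while preservation of identities and composites is forced by the uniqueness clause of the discrete fibration axiom. These two assignments are mutually inverse, and $\pi\circ\alpha = \id_{\cat C}$ holds by construction.

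Finally I would assemble the three facts: Lemma~\ref{lem:discreteGrothendieck} identifies the precompact expansions of $\cat C$ with the diagrams $\cat D : \cat C\op \to \Set$ having finite nonempty values; the previous paragraph identifies the sections of such an expansion with the solutions of the corresponding $\cat D$; hence the right-hand condition of the corollary is equivalent to ``every diagram of finite nonempty sets on $\cat C\op$ has a solution'', which by Theorem~\ref{thm:ramsey=könig} is equivalent to $\cat C$ being confluent and Ramsey. I expect the only step requiring genuine verification to be the section-versus-solution bijection of the second paragraph, and even there the work is light since functoriality of $\alpha$ comes for free from uniqueness of lifts; the rest is bookkeeping around two results already in hand.
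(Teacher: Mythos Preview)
Your proposal is correct and follows exactly the paper's approach: invoke Lemma~\ref{lem:discreteGrothendieck} to pass between precompact expansions and diagrams of finite nonempty sets, observe that sections of the expansion correspond to solutions of the diagram, and conclude via Theorem~\ref{thm:ramsey=könig}. The paper's proof is just a two-line version of what you wrote, stating the section--solution correspondence without spelling out the verification you carried out in your second paragraph.
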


\begin{proof}
    Under the correspondence given in Lemma~\ref{lem:discreteGrothendieck}, solutions to diagrams are one-to-one with functors $\cat C\xrightarrow{\alpha}\cat E$ as above. Explicitly, if $(x_C)_{C\in\cat C}$ is a solution of the diagram corresponding to $\pi$, then $\alpha:C\mapsto x_C$ defines the desired functor.
\end{proof}

\begin{exam}
    Such $\alpha$ can be thought of as ways for $\cat E$ to 'contain a copy' of $\cat C$. For example, let $\cat C$ be the class of finite linearly ordered sets, let $\cat E$ be the class of finite linearly ordered graphs, and let $\pi$ be the functor that forgets all edges. Then $\cat E$ contains two copies of $\cat C$, namely complete graphs and edgeless graphs.
\end{exam}

\begin{prop}
\label{prop:concrete expansions}
    If $\cat C$ is a class of relational structures, then every expansion of $\cat C$ in the sense of Definition~\ref{def:expansions} is isomorphic to an expansion in the sense of Definition~\ref{def:expansionExplicit}.
\end{prop}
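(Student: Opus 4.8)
**Proof proposal for Proposition 4.x (every abstract expansion of a class of relational structures is isomorphic to a concrete one)**

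The plan is to take an abstract expansion $\pi\colon\cat E\to\cat C$, pass through the discrete Grothendieck correspondence of Lemma~\ref{lem:discreteGrothendieck} to get a diagram $\cat D\colon\cat C\op\to\Set$ with $\cat D_{\str C}=\pi^{-1}(\str C)$, and then manufacture a genuine class $\cat E'$ of $\sigma\cup\tau$-structures together with an isomorphism of expansions $\cat E\cong\cat E'$ over $\cat C$. The key idea is that for each object $E\in\cat E$ lying over $\str C=\pi(E)$, the fibre data together with the action of embeddings into $\str C$ is exactly enough to encode new relations on the underlying set $C$: so I would build, for every $E$, a structure $\str E'$ on the same underlying finite set $C$, with $\str E'|_\sigma=\str C$, whose $\tau$-part records which element of which fibre we are sitting over.

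Concretely, first I would choose the language $\tau$. For each object $\str C\in\cat C$ and each element $E\in\pi^{-1}(\str C)$ I do not want a symbol per pair, so instead I fix one relational symbol — or a small family of them — rich enough to separate fibre elements. The cleanest route: enlarge $\tau$ to contain, for every $n$, an $n$-ary symbol, and for a fibre element $E$ over $\str C$ (with $|C|=n$, fix an enumeration $C=\{c_1,\dots,c_n\}$) define the $n$-ary relation of $\str E'$ to be the set of tuples $(c_{\tau(1)},\dots,c_{\tau(n)})$ for $\tau$ ranging over those permutations that correspond, via the unique-lifting axiom, to automorphisms of $\str C$ fixing $E$; more robustly, I would use a family of symbols indexed by an abstract "colour set" large enough to inject all fibres, and exploit that the fibres are finite (precompactness) only if needed, but the statement does not assume precompactness, so the colour set may need to be a proper class — which is fine, since the class $\cat C$ itself is only essentially small and $\tau$ is allowed to be any set... here I must be slightly careful. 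I would therefore first reduce to the essentially small skeleton of $\cat C$, so that "the set of all fibre elements" is a genuine set, choose $\tau$ to have one symbol $R_E$ of arity $|{\rm underlying\ set\ of\ }\pi(E)|$ for each such $E$, and set $R_E^{\str E'}$ to be the diagonal-type relation picking out $E$ among its fibre-siblings while being empty in all other $\str E''$ over the same structure.

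With $\cat E'$ defined as the class of all these $\str E'$, the second task is to check that an embedding $\str C'\xrightarrow{f}\str C$ lifts \emph{uniquely} to an embedding $\str E''\to\str E'$ of the corresponding expanded structures, and that this lift agrees with the one coming from $\cat D_f$ in the abstract expansion. This is where the bookkeeping lives: I must verify that $f$, viewed as an injection of underlying sets, is a $\sigma\cup\tau$-embedding from $\str E''$ (the concrete structure over $\str C'$ corresponding to $\cat D_f(E)$) into $\str E'$ precisely when the $\tau$-relations were set up compatibly — i.e. the reason $\cat D_f(E)$ is \emph{the} element whose $\tau$-data pulls back correctly along $f$ is exactly the functoriality $\cat D_{g\circ f}=\cat D_f\circ\cat D_g$. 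Establishing that the assignment $E\mapsto\str E'$ on objects and $(E',E,f)\mapsto f$ on arrows is a full, faithful, essentially surjective functor $\cat E\to\cat E'$ commuting with the projections to $\cat C$ then finishes the proof; fullness and faithfulness are immediate from the definition of arrows in the Grothendieck construction, and essential surjectivity holds by construction of $\cat E'$.

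The main obstacle I expect is the size/coherence issue in choosing $\tau$: a naive "one symbol per fibre element over each structure" risks being a proper class and also risks the \emph{same} abstract structure appearing with incompatible relations along two different embeddings, so I would spend the real work in showing that one can pick the $\tau$-relations \emph{uniformly in the isomorphism type of the pair $(\str C, E)$} so that $\tau$ is a set and the discrete-fibration (unique lifting) property transports to honest embedding-lifting. Once the encoding is chosen coherently, everything else is a routine translation between Definition~\ref{def:expansions} and Definition~\ref{def:expansionExplicit} via Lemma~\ref{lem:discreteGrothendieck}.
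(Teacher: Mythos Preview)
Your overall framework matches the paper: take $\tau$ to have one relation symbol for each object $E$ of $\cat E$, with arity the underlying set of $\pi(E)$, and build on that set a $\sigma\cup\tau$-structure $\str E$ whose $\sigma$-reduct is $\pi(E)$. The size worry is not an issue, since by the paper's standing convention $\cat E$ is small, so $\tau$ is a set.

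The genuine gap is the definition of the $\tau$-relations. Your two suggestions --- the orbit of the identity tuple under automorphisms fixing $E$, or a single ``diagonal'' tuple in $\str E$ with the relation empty in all siblings --- do not give the right embedding behaviour. With the diagonal definition, the identity tuple lies in $R_{E'}^{\str{E'}}$, so for any map $f$ to be an embedding $\str{E'}\to\str E$ one would need $f$ (as a tuple) to lie in $R_{E'}^{\str E}$; but you have declared that relation empty whenever $E\neq E'$, so no nontrivial embeddings exist at all. The automorphism version has the same defect for maps between fibres over different objects of $\cat C$. This is exactly the ``incompatible relations along two different embeddings'' problem you flagged, and it is not merely bookkeeping: your proposed relations are the wrong ones, not just under-specified.

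The paper's fix is a single line that dissolves the coherence problem entirely: set
\[
E'^{\,\str E}\;:=\;\bigl\{\,\pi(g)\ \bigm|\ E'\xrightarrow{g}E\text{ in }\cat E\,\bigr\},
\]
i.e.\ the interpretation of the symbol $E'$ in $\str E$ is the set of \emph{all} underlying maps of $\cat E$-arrows from $E'$ to $E$, not just the identity. Then a map $f\colon\pi(E')\to\pi(E)$ is an embedding $\str{E'}\to\str E$ if and only if $f\in E'^{\,\str E}$, which holds if and only if $f=\pi(g)$ for some (necessarily unique, by the discrete-fibration axiom) arrow $g\colon E'\to E$. Functoriality of $\pi$ gives closure under composition automatically, and the converse direction uses only that embeddings in $\cat C$ are left-cancellable together with unique lifting. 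Once you replace your diagonal relation by this one, the rest of your outline goes through verbatim and no further ``uniformity in the isomorphism type'' argument is needed.
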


\begin{proof}
    Given an abstract expansion $\cat E\xrightarrow{\pi}\cat C$, let $\tau$ be the set of objects in $\cat E$, where the arity of a symbol $E\in\tau$ is the underlying set of $\pi(E)$. For each symbol $E$ construct a $\sigma\cup\tau$-structure $\str E$ by defining the relations as follows. $\str E|_\sigma := \pi(E)$ and for any symbol $E'\in\tau$ define its interpretation $E'^{\str E}$ as the set of all functions from the domain of $\pi(E')$ to the domain of $\pi(E)$, which arise from the arrows of $\cat E$ through $\pi$.
    $$
    E'^{\str E} := \{\pi(E')\xrightarrow{\pi(g)}\pi(E)\mid E'\xrightarrow{g} E\}
    $$
    There is a bijection between objects in $\cat E$ and structures obtained this way and if $E'\xrightarrow{g}E$ is an arrow in $\cat E$, then the map $\pi(g)$ is an embedding $\str E'\to\str E$. Conversely, if a map $f$ is an embedding $\str E'\to\str E$, then the relation $E'^{\str E}$ must contain $f$, hence there is an  arrow $E'\xrightarrow{g} E$ with $f=\pi(g)$, which must be unique due to the second axiom of expansions.
\end{proof}

Let $\kappa$ be a regular cardinal, and let $L_{\kappa,0}$ be the logic which expands first-order logic by infinite conjunctions and disjunctions of size less than $\kappa$.

\begin{prop}
\label{prop:definability}
    Let $\cat E\xrightarrow{\pi}\cat C$ be an abstract expansion between two classes of relational structures, where both $\cat C$ and $\cat E$ are closed under taking induced substructures, and $\pi$ preserves the underlying sets and maps. Let the signatures of $\cat C$ and $\cat E$ be $\sigma$ and $\tau$ respectively. If $\kappa$ is a regular cardinal larger than the number of symbols in $\sigma$ and the number of non-isomorphic structures in $\cat E$, then for every symbol $R\in\sigma$ there is a quantifier free $L_{\kappa,0}$-formula $\phi_R$ whose set of variables is the arity of $R$, which defines $\pi$, i.e.
    $$
    \pi(\str E) = (E; \phi_R^{\str E}\mid R\in\sigma)\in\cat C
    $$
    for all $\str E\in\cat E$.
    In particular, one can $L_{\kappa,0}$-define $\cat C$ from $\cat E$ without quantifiers.
\end{prop}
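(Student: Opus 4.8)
The plan is to exploit the fact that, since $\cat E$ has fewer than $\kappa$ isomorphism types and each structure in $\cat E$ is finite, we can describe the isomorphism type of any finite induced substructure of an $\str E\in\cat E$ by a single quantifier-free $L_{\kappa,0}$-formula obtained as an infinite disjunction. Concretely, fix a symbol $R\in\sigma$ of arity $\ar R$; we want a formula $\phi_R$ in the variables $\{x_i\mid i\in\ar R\}$ such that, for every $\str E\in\cat E$ and every tuple $\bar e:\ar R\to E$, we have $\str E\models\phi_R(\bar e)$ if and only if $\bar e\in R^{\pi(\str E)}$. The idea is to let $\phi_R$ be the disjunction, over all isomorphism types $[\str F]$ of pairs $(\str F,\bar f)$ where $\str F\in\cat E$ is generated (as an induced substructure) by the image of a tuple $\bar f:\ar R\to F$ and $\bar f\in R^{\pi(\str F)}$, of the quantifier-free diagram formula $\delta_{(\str F,\bar f)}(\bar x)$ that asserts ``the substructure induced on $\bar x$ is isomorphic to $(\str F,\bar f)$ via $x_i\mapsto f(i)$.'' Since $\ar R$ is finite, the induced substructure on $\bar f$ has bounded size, its $\tau$-diagram is a finite conjunction of literals, and there are fewer than $\kappa$ such types (each is a substructure of some member of $\cat E$, and there are $<\kappa$ of those up to isomorphism, each finite), so $\phi_R$ is a legitimate quantifier-free $L_{\kappa,0}$-formula.

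First I would make precise the diagram formula: for a finite $\tau$-structure $\str F$ with domain enumerated by a tuple $\bar f$ (allowing repetitions, so that the image of $\bar f$ is all of $F$), let $\delta_{(\str F,\bar f)}(\bar x)$ be the conjunction of all literals $S(\bar x_{\bar a})$ for $\bar a$ with $f\circ\bar a\in S^{\str F}$, all literals $\neg S(\bar x_{\bar a})$ for $\bar a$ with $f\circ\bar a\notin S^{\str F}$, and all equalities/inequalities $x_i=x_j$ resp. $x_i\neq x_j$ according to whether $f(i)=f(j)$; here $\bar x_{\bar a}$ denotes the reindexed subtuple. This is a finite conjunction of quantifier-free literals because $\tau$ has $<\kappa$ symbols but, more to the point, the induced substructure on a tuple of length $|\ar R|$ has at most $|\ar R|$ elements, so only finitely many literals in the relevant symbols are needed — but if $\tau$ is infinite I would instead take the conjunction over all of $\tau$, which is an $L_{\kappa,0}$-conjunction of size $<\kappa$; either way it is within $L_{\kappa,0}$. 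Next I would verify the key equivalence: $\str E\models\delta_{(\str F,\bar f)}(\bar e)$ holds exactly when the map $f(i)\mapsto e_i$ is a well-defined isomorphism from $\str F$ onto the induced substructure of $\str E$ on the image of $\bar e$; this is immediate from the definition of induced substructure and of embedding.

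Then the main verification is: $\bar e\in R^{\pi(\str E)}$ iff $\str E\models\phi_R(\bar e)$. For the forward direction, let $\str F$ be the $\tau$-substructure of $\str E$ induced on the image of $\bar e$; since $\cat E$ is closed under induced substructures, $\str F\in\cat E$, and since $\pi$ preserves underlying sets and is (by Proposition~\ref{prop:concrete expansions}, or rather by hypothesis that $\pi$ is a concrete expansion compatible with substructures) itself compatible with taking induced $\sigma$-substructures, $\bar e\in R^{\pi(\str F)}$; hence $(\str F,\bar e)$ — reindexed through $\bar e$ — is one of the types appearing in the disjunction, and $\str E\models\delta_{(\str F,\bar e)}(\bar e)$ trivially via the identity, so $\str E\models\phi_R(\bar e)$. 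For the converse, if $\str E\models\delta_{(\str F,\bar f)}(\bar e)$ for one of the disjuncts, then $f(i)\mapsto e_i$ is an isomorphism $\str F\to\str E'$ onto the induced substructure $\str E'$ on the image of $\bar e$, so it is an embedding $\str F\hookrightarrow\str E$; since $\pi$ is a functor preserving underlying maps, $\pi$ of this embedding is an embedding $\pi(\str F)\hookrightarrow\pi(\str E)$ realized by the same underlying map, and since $\bar f\in R^{\pi(\str F)}$ we get $\bar e\in R^{\pi(\str E)}$. This gives $\pi(\str E)=(E;\phi_R^{\str E}\mid R\in\sigma)$, and the final clause about $L_{\kappa,0}$-defining $\cat C$ from $\cat E$ without quantifiers follows because $\cat C=\{\pi(\str E)\mid\str E\in\cat E\}$ by the expansion property.

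\textbf{Main obstacle.} The delicate point is not the formula construction but checking that $\pi$ genuinely commutes with taking induced substructures, which is what licenses reducing the membership of $\bar e$ in $R^{\pi(\str E)}$ to the isomorphism type of a bounded induced substructure; this is exactly the extra hypothesis ``$\pi$ preserves the underlying sets and maps'' together with ``$\cat C,\cat E$ closed under induced substructures,'' and I would spell out that under these hypotheses, for $\str F$ an induced $\tau$-substructure of $\str E$, $\pi(\str F)$ is the induced $\sigma$-substructure of $\pi(\str E)$ on the same domain — because the inclusion $\str F\hookrightarrow\str E$ is an embedding, $\pi$ sends it to an embedding on the same underlying set, and an embedding which is an inclusion of sets exhibits the domain as an induced substructure. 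The cardinality bookkeeping — that the disjunction has size $<\kappa$ — is then routine given that $\kappa$ exceeds both $|\sigma|$ and the number of isomorphism types in $\cat E$, since every type in the disjunction is pinned down by a finite substructure of a member of $\cat E$ together with a surjection from the finite set $\ar R$.
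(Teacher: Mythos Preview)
Your proposal is correct and follows essentially the same route as the paper. Both proofs take $\phi_R$ to be a disjunction, indexed by pairs $(\str F,\bar f)$ with $\str F\in\cat E$, $\bar f$ a surjective $\ar R$-tuple, and $\bar f\in R^{\pi(\str F)}$, of the quantifier-free diagram of $\str F$ together with the equality pattern of $\bar f$; the paper splits this into two pieces $\psi_{\bar e}\wedge\psi_{\str E}(\iota)$ using a left inverse $\iota$ of $\bar e$, while you package both into a single $\delta_{(\str F,\bar f)}$, but the content and the verification are the same. Your explicit discussion of why $\pi$ commutes with induced substructures (via the discrete-fibration property and the hypothesis that $\pi$ preserves underlying maps) is in fact more careful than the paper, which uses this silently; note also that your passing claim ``$\tau$ has $<\kappa$ symbols'' is not among the stated hypotheses---the paper's own size estimate likewise invokes $|\tau|$, so this is a wrinkle in the statement rather than in your argument.
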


\begin{proof}
    For each structure $\str E\in\cat E$, let $\psi_{\cat E}$ be the formula on variable set $E$, which determines $\str E$, i.e.
    $$
    \psi_{\str E} = 
    \bigwedge_{\substack{S\in\tau\\ \bar{e}\in S^{\str E}}} S(\bar{e}) \wedge
    \bigwedge_{\substack{S\in\tau\\ \bar{e}\notin S^{\str E}}} \neg S(\bar{e})
    $$
    Given a $\tau$-structure $\str E'$ and an injective tuple $E\xrightarrow{f}E'$, the statement $\psi_{\str E}(f)$ will be true in $\str E'$ if and only if $f$ is an embedding from $\str E$ to $\str E'$. To deal with non-injective tuples, for every surjective map $N\xrightarrow{g}M$ we define the following formula with variable set $N$.
    $$
    \psi_g =
    \bigwedge_{\substack{x,y\in N\\ g(x)= g(y)}} (x = y) \wedge
    \bigwedge_{\substack{x,y\in N\\ g(x)\neq g(y)}} (x \neq y)
    $$
    For any symbol $R\in\sigma$, let $\phi_R$ be the following formula with variable set $\ar R$:
    $$
    \phi_R :=
    \bigvee_{\str E\in\cat E}
    \bigvee_{\substack{\bar{e}\in R^{\pi(\str E)} \\ \text{surj}}}
    \psi_{\bar{e}}\wedge \psi_{\str E}(\iota)
    $$
    Here, $\bar{e}$ is any surjective tuple the relation $R^{\pi(\str E)}$, while $\iota$ is any map $E\xrightarrow{\iota}\ar R$ which is left inverse to $\bar{e}$, so $\bar{e}\circ\iota=\id_{E}$. The size of the above formulae is bounded by the size of the signature $\tau$ and the number of non-isomorphic structures in $\cat E$ whose domain size is not larger than the arity of $S$.
    
    To verify that the formulae $\phi_R$ define $\pi$, let $\str E$ be a structure in $\cat E$ and let $\ar S\xrightarrow{\bar{c}} E$ be a tuple. We show that $\bar{c}\in R^{\pi(\str E)}$ if and only if $\phi_R(\bar{c})$ is true in $\str E$.
    
    Assume that $\phi_R(\bar{c})$ is true in $\str E$, then there is a structure $\str E'$ and a surjective tuple $\bar{e}\in R^{\pi(\str E')}$ such that $\psi_{\bar{e}}(\bar{c})$ and $\psi_{\str E}({\bar{c}\circ\iota})$ are true in $\str E'$. The former implies that $e_i=e_j$ if and only if $c_i=c_j$ and in particular that $\bar{c}\circ\iota$ is injective. Hence, it is an embedding $\str E'\xrightarrow{}\str E$ and also an embedding $\pi(\str E')\xrightarrow{}\pi(\str E)$. This implies that $\bar{c}=\bar{c}\circ\iota\circ\bar{e}$ is a tuple in $R^{\pi(\str E)}$.

    Conversely, if $\bar{c}$ is an element of $R^{\pi(\str E)}$, let $\str E'$ be the substructure of $\str E$ induced on the image of $\bar{c}$, then $\bar{c}$ is a surjective tuple in $R^{\pi(\str E')}$. Moreover, both $\psi_{\bar{c}}(\bar{c})$ and $\psi_{\str E'}(\bar{c}\circ\iota)$ hold in $\str E$, since $\bar{c}\circ\iota$ is the inclusion $\str E'\leq \str E$.
\end{proof}

\subsection{Homomorphisms, cores and the expansion property}
\label{subsec:expansions}

If a class of relational structures has a precompact Ramsey expansion, then it has a unique minimal precompact Ramsey expansion. A similar statement first appeared in the context of topological dynamics in \cite[Theorem~9.2, Theorem~10.7]{KPT2005} 
and was later refined in \cite[Theorem~6]{NVT2013precompactExpansions}. We provide a further refinement and a generalization to arbitrary essentially small locally finite categories using abstract expansions and new auxiliary notions, namely homomorphisms and cores of expansions.

\begin{defin}
\label{def:homs}
    A \emph{homomorphism} between two expansions $\cat E\xrightarrow{\pi}\cat C$ and $\cat F\xrightarrow{\rho}\cat C$ is a functor $\cat E\xrightarrow{\alpha}\cat F$ such that $\pi=\rho\circ\alpha$.
    $$
    \begin{tikzcd}
    \cat E \arrow[rd, "\pi"'] \arrow[rr, "\alpha", ""name=alpha] &        & \cat F \arrow[ld, "\rho"] \\
    & \cat C &
    \arrow[to=2-2, from=alpha, pos=.5, phantom, "\circ"]
    \end{tikzcd}
    $$
    We say that $\alpha$ is surjective (bijective, \dots) if it is surjective (bijective, \dots) on objects.
    Observe that homomorphisms of expansions are invertible if and only if they are bijective.
\end{defin}

\begin{lem}
\label{lem:ramseyHoms}
Let $\cat E\xrightarrow{\pi}\cat C$ and $\cat F\xrightarrow{\rho}\cat C$ be two expansions. If $\cat E$ is confluent Ramsey and $\rho$ is precompact, then there is a homomorphism of expansions $\pi\to\rho$.
\end{lem}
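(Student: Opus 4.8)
The plan is to realize the wanted functor $\alpha$ as a section of a precompact expansion of $\cat E$, and then to quote the implication ``Ramsey $\Rightarrow$ K\"onig'' already proved above. Concretely, I would form the pullback of $\rho$ along $\pi$: let $\cat E\times_{\cat C}\cat F$ be the category whose objects are the pairs $(E,F)$ with $\pi(E)=\rho(F)$ and whose arrows $(E',F')\to(E,F)$ are pairs consisting of an arrow $E'\to E$ in $\cat E$ and an arrow $F'\to F$ in $\cat F$ having the same image in $\cat C$; write $p\colon\cat E\times_{\cat C}\cat F\to\cat E$ for the first projection. Since everything in sight is (essentially) small, so is this pullback.

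\textbf{The pullback is a precompact expansion of $\cat E$.} Surjectivity on objects is immediate from surjectivity of $\rho$: given $E\in\cat E$ choose $F$ with $\rho(F)=\pi(E)$, so that $(E,F)$ lies over $E$. For the discrete fibration axiom, given an object $(E,F)$ and an arrow $e'\colon E'\to E$ in $\cat E$, the arrow $\pi(e')\colon\pi(E')\to\pi(E)=\rho(F)$ lifts uniquely through the discrete fibration $\rho$ to an arrow $f'\colon F'\to F$ with $\rho(f')=\pi(e')$; then $(e',f')\colon(E',F')\to(E,F)$ is a lift of $e'$, and it is the only one because $f'$ is unique. Finally, the fiber of $p$ over $E$ is in bijection with $\rho^{-1}(\pi(E))$, which is finite because $\rho$ is precompact, so $p$ is precompact. (One may avoid the pullback vocabulary entirely and instead write down directly the diagram $\cat E\op\to\Set$ sending $E$ to $\rho^{-1}(\pi(E))$ and an arrow $e\colon E'\to E$ to the map sending $F\in\rho^{-1}(\pi(E))$ to the source of the unique $\rho$-lift of $\pi(e)$ ending at $F$; this is exactly the diagram attached to $p$ by Lemma~\ref{lem:discreteGrothendieck}, and functoriality of the diagram is the same uniqueness bookkeeping as above.)

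\textbf{Producing $\alpha$.} By Lemma~\ref{lem:discreteGrothendieck}, $p$ corresponds to a diagram $\cat D\colon\cat E\op\to\Set$ whose sets are finite (since $p$ is precompact) and nonempty (since $p$ is an expansion). As $\cat E$ is confluent and Ramsey, Corollary~\ref{cor:compactRamsey} furnishes a solution of $\cat D$; and, as observed in the proof of Corollary~\ref{cor:structures}, solutions of $\cat D$ correspond under Lemma~\ref{lem:discreteGrothendieck} to functors $s\colon\cat E\to\cat E\times_{\cat C}\cat F$ with $p\circ s=\id_{\cat E}$. Put $\alpha:=\mathrm{pr}_2\circ s\colon\cat E\to\cat F$, where $\mathrm{pr}_2$ is the second projection. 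Then $\rho\circ\alpha=\rho\circ\mathrm{pr}_2\circ s=\pi\circ\mathrm{pr}_1\circ s=\pi\circ p\circ s=\pi$, so $\alpha$ is a homomorphism of expansions $\pi\to\rho$, as required.

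\textbf{Main obstacle.} There is no deep difficulty here: the only point needing care is the verification that the pullback $p$ inherits the discrete fibration property — i.e.\ the manipulation of unique lifts in the second paragraph — together with the (already recorded) translation between solutions of $\cat D$ and sections of $p$. Everything else is formal diagram chasing. If brevity is preferred, it is cleanest to skip the pullback, define the diagram $E\mapsto\rho^{-1}(\pi(E))$ by hand, apply Corollary~\ref{cor:compactRamsey}, and read off the solution directly as the object map of a functor $\alpha$, defining $\alpha$ on an arrow $e\colon E'\to E$ to be the unique $\rho$-lift of $\pi(e)$ ending at $\alpha(E)$ (its source being $\alpha(E')$ by the compatibility condition of the solution), with functoriality following once more from uniqueness of lifts.
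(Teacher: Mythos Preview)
Your proposal is correct and takes essentially the same approach as the paper: the paper defines directly the diagram $\cat E\op\to\Set$, $E\mapsto\rho^{-1}(\pi(E))$ and applies Theorem~\ref{thm:ramsey=könig}, which is exactly the ``brief'' variant you describe at the end. Your pullback formulation is simply the Grothendieck packaging of that same diagram (as you yourself note), so there is no substantive difference---you have merely spelled out the details the paper leaves implicit.
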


\begin{proof}
    We define a diagram $\cat E\op\to\Set, E\mapsto \rho^{-1}(\pi(E))$, where the sets are finite and nonempty by precompactness and surjectivity of $\rho$, respectively. Hence the diagram has a solution by Theorem~\ref{thm:ramsey=könig}, which is the sought-after homomorphism.
\end{proof}

\begin{lem}
\label{lem:surj}
    Surjective homomorphisms of expansions are themselves expansions.
\end{lem}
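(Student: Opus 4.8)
The plan is to check the two axioms of Definition~\ref{def:expansions} for the functor $\alpha\colon\cat E\to\cat F$, regarded as a candidate expansion of $\cat F$. Surjectivity on objects is exactly the hypothesis on $\alpha$, so the content of the lemma is the discrete fibration property. The strategy is to move lifting problems for $\alpha$ down to $\cat C$ along $\rho$, solve them using the discrete fibration $\pi$, and then recognize the resulting arrow as the desired lift via the uniqueness clause of the discrete fibration $\rho$; here the identity $\pi=\rho\circ\alpha$ is what glues the two halves together.

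Concretely, I would fix $E\in\cat E$ and an arrow $F'\xrightarrow{g}\alpha(E)$ in $\cat F$, and consider its image $\rho(F')\xrightarrow{\rho(g)}\rho(\alpha(E))=\pi(E)$ in $\cat C$. Since $\pi$ is a discrete fibration, this arrow lifts to a unique arrow $E'\xrightarrow{h}E$ in $\cat E$ with $\pi(E')=\rho(F')$ and $\pi(h)=\rho(g)$; this $h$ is the candidate lift of $g$ along $\alpha$. To see that it works, note that $\alpha(h)$ and $g$ are two arrows of $\cat F$ with target $\alpha(E)$, and that $\rho(\alpha(h))=\pi(h)=\rho(g)$ while $\rho(\alpha(E'))=\pi(E')=\rho(F')$; applying the uniqueness clause in the discrete fibration property of $\rho$ to the object $\alpha(E)$ and the arrow $\rho(g)$ forces $\alpha(h)=g$, and in particular $\alpha(E')=F'$. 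For uniqueness of the lift, if $E''\xrightarrow{h'}E$ is any arrow of $\cat E$ with $\alpha(h')=g$, then $\pi(h')=\rho(\alpha(h'))=\rho(g)$ and $\pi(E'')=\rho(F')$, so the uniqueness clause of the discrete fibration $\pi$ gives $h'=h$, completing the verification.

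This is really just diagram chasing, so I do not expect a genuine obstacle; the only point to be careful about is invoking the right uniqueness statement at each step --- that of $\pi$ both to construct $h$ and to pin it down, and that of $\rho$ to identify $\alpha(h)$ with $g$. It is also worth remarking that surjectivity of $\alpha$ enters only through the first axiom of an expansion and is irrelevant to the discrete fibration part, which is really the formal fact that if a composite $\rho\circ\alpha$ and the second factor $\rho$ are discrete fibrations, then so is the first factor $\alpha$.
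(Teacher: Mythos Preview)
Your proposal is correct and follows essentially the same approach as the paper: lift the given arrow along the discrete fibration $\pi$ using $\rho(g)$, then use the uniqueness clause of the discrete fibration $\rho$ to identify the $\alpha$-image of the lift with the original arrow. Your write-up is more careful than the paper's (you spell out the uniqueness of the lift explicitly and note that surjectivity is irrelevant to the discrete fibration part), but the argument is the same.
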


\begin{proof}
    One needs to verify that they are discrete fibrations. Using the notation form Definition~\ref{def:homs}, let $E\in\cat E$ be an object and $F\xrightarrow{f}\alpha(E)$ an arrow in $\cat F$. Since $\pi$ is an expansion, there is a unique arrow $E'\xrightarrow{g}E$ with target $E$ such that $\pi(g)=\rho(f)$. This in turn implies $\alpha (g) =f$, as there is a unique arrow in $\cat F$ whose target is $\alpha( E)$ and whose $\pi$-image is $\rho(f)$.
\end{proof}

We show that if an expansion has the so-called \emph{expansion property}, then every homomorphism to it is surjective. Combined with the two above lemmas, this will yield that every Ramsey expansion of $\cat C$ is an expansion of every expansion of $\cat C$ with the expansion property.

\begin{defin}[Definition 2 of \cite{NVT2013precompactExpansions}]
\label{def:ep}
Let $\cat E\xrightarrow{\pi}\cat C$ be an expansion and let $C\in\cat C$. We say that $C'\in\cat C$ has the \emph{expansion property} (EP) with respect to $C$, if for all $E\in\pi^{-1}(C)$ and $E'\in\pi^{-1}(C')$ there is an arrow $E\xrightarrow{}E'$ in $\cat E$. We say that $\pi$ has EP if for every $C$ there exists such $C'$.
\end{defin}

\begin{prop}
\label{prop:ep}
Let $\cat E\xrightarrow{\pi}\cat C$ be a precompact expansion and let $\cat C$ be confluent. Then $\pi$ has EP if and only if all homomorphisms to it are surjective.
\end{prop}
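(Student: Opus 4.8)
The plan is to establish the two implications separately; confluence and precompactness will only be needed for the ``if'' direction, while the ``only if'' direction is a diagram chase through the uniqueness clauses of the two discrete fibrations involved. For that direction, let $\alpha:\cat{F}\to\cat{E}$ be a homomorphism of expansions, so that $\rho:=\pi\circ\alpha:\cat{F}\to\cat{C}$ is itself an expansion, and fix $E\in\cat{E}$ with $C:=\pi(E)$. Since $\pi$ has EP, choose $C'$ having the expansion property with respect to $C$; using surjectivity of $\rho$, choose $F'\in\cat{F}$ with $\rho(F')=C'$, and set $E':=\alpha(F')$, so $\pi(E')=C'$. By the choice of $C'$ there is an arrow $g:E\to E'$ in $\cat{E}$; write $f:=\pi(g):C\to C'$. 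Lifting $f$ along the discrete fibration $\rho$ with target $F'$ produces an arrow $h:F''\to F'$ in $\cat{F}$ with $\rho(F'')=C$ and $\rho(h)=f$, so that $\alpha(h):\alpha(F'')\to E'$ is an arrow of $\cat{E}$ lying over $f$, with target $E'$ and source over $C$. Since $g$ is also such an arrow, the uniqueness clause of the discrete fibration $\pi$ forces $g=\alpha(h)$, whence $E=\alpha(F'')\in\im\alpha$. As $E$ was arbitrary, $\alpha$ is surjective.

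For the ``if'' direction I would first record a structural reformulation: a sub-expansion of $\pi$ is precisely a full subcategory $\cat{E}'\subseteq\cat{E}$ that is surjective on objects over $\cat{C}$ and is \emph{downward closed} for the reachability preorder on $\cat{E}$, where $E_1\preceq E_2$ means that $\cat{E}$ has an arrow $E_1\to E_2$. This is immediate from Definition~\ref{def:expansions}: since $\pi$ is a discrete fibration, every arrow of $\cat{E}$ is the unique lift of its $\pi$-image, so closure of $\cat{E}'$ under the second expansion axiom is exactly closure under $\preceq$-predecessors. The inclusion of a proper sub-expansion is a non-surjective homomorphism into $\pi$, so the hypothesis forces $\cat{E}$ to have no proper sub-expansion. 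Fixing any $E_0\in\cat{E}$, the set $\{E:E_0\preceq E\}$ is upward closed, so its complement is a downward-closed full subcategory; it cannot be surjective over $\cat{C}$, hence there is an object $D(E_0)\in\cat{C}$ with $\pi^{-1}(D(E_0))\subseteq\{E:E_0\preceq E\}$.

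Now fix $C\in\cat{C}$; we may assume $\cat{C}$ connected, since EP and the surjectivity hypothesis are both checked component by component. By precompactness the fibre $\pi^{-1}(C)=\{E_1,\dots,E_n\}$ is finite and nonempty, and each $D(E_i)$ lies in the component of $C$ (any arrow from $E_i$ into the nonempty fibre $\pi^{-1}(D(E_i))$ projects to an arrow $C\to D(E_i)$). Iterating confluence over the finite family $C,D(E_1),\dots,D(E_n)$, choose a common target $C'$ with arrows $b_i:D(E_i)\to C'$. I claim $C'$ has EP with respect to $C$: given $E_i\in\pi^{-1}(C)$ and $E'\in\pi^{-1}(C')$, lift $b_i$ along $\pi$ with target $E'$ to obtain an arrow $\tilde E\to E'$ with $\tilde E\in\pi^{-1}(D(E_i))\subseteq\{E:E_i\preceq E\}$; then there is an arrow $E_i\to\tilde E$, and composition yields an arrow $E_i\to E'$. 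Hence every $C$ has such a witness, i.e.\ $\pi$ has EP.

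I expect the ``if'' direction to be the substantive one, and within it the step of amalgamating the finitely many fibre-wise witnesses $D(E_1),\dots,D(E_n)$ into the single object $C'$: this is exactly where precompactness (keeping $\pi^{-1}(C)$ finite) and confluence (forming the common target) are used, and the argument collapses if either hypothesis is dropped.
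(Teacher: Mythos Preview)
Your proof is correct and follows essentially the same approach as the paper's: the ``only if'' direction is the same diagram chase through the two discrete fibrations, and for the ``if'' direction both you and the paper pass to the subcategory of objects not receiving an arrow from a fixed $E_0$, observe it is a discrete fibration that fails surjectivity, extract the per-$E_0$ witness $D(E_0)$ (the paper's $C_E$), and amalgamate the finitely many witnesses via confluence and precompactness. Your write-up is somewhat more explicit---you name the reachability preorder, check connectedness, and spell out the final lifting step verifying EP for $C'$---but the argument is the same.
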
        

\begin{proof}
Let $\cat F\xrightarrow{\rho}\cat C$ be another expansion and let $\cat F\xrightarrow{\alpha}\cat E$ be a homomorphism from $\rho$ to $\pi$. 

First, we assume that $\pi$ has EP and show that any object $E\in\cat E$ is in the image of $\alpha$. Let $C'$ be an object with the expansion property with respect to $\pi(E)$ and let $F'\in\cat F$ be an object with $\rho(F')=C'$. Since $\pi(\alpha(F'))=C'$ there must be an arrow $E\xrightarrow{f}\alpha(F')$ in $\cat E$. Also, $\rho$ is an expansion; hence, there is an arrow $F\xrightarrow{g}F'$ with $\pi(f)=\rho(g)=\pi(\alpha(g))$. Since $f$ and $\alpha(g)$ share the same target, namely $\alpha(F')$, and have the same image under $\pi$, they must be equal since $\pi$ is an expansion. In particular, they share the same source $\alpha(F)=E$.

For the converse, we fix $C\in\cat C$ and $E\in\cat \pi^{-1}(C)$. 
Let $\cat E'$ be the subcategory of $\cat E$ consisting of all objects which do not admit an arrow from $E$. 
If $\cat E'\xrightarrow{\pi|_{\cat E'}}\cat C$ were an expansion, then the inclusion $\cat E'\leq \cat E$ would be a homomorphism to $\pi$, which is not surjective as $E\notin\cat E'$.
However, $\cat E'\xrightarrow{\pi|_{\cat E'}}\cat C$, is a discrete fibration, so it cannot be surjective. In particular, there is $C_E\in\cat C$ such that every $E'\in\pi^{-1}(C_E)$ admits an arrow $E\to E'$. 
This also gives an arrow $C\to C_E$ and since $\cat C$ is confluent, we find an object $C'$ such that there are arrows $C_E\to C'$ for every $E\in\cat E_C$. 

This $C'$ has the expansion property with respect to $C$.
\end{proof}

\begin{cor}
\label{cor:order}
    Let $\cat C$ be a confluent Ramsey class of relational structures which is closed under taking induced substructures and isomorphisms. Then one can $L_{\kappa,0}$-define the class of linear orders from $\cat C$, where $\kappa$ is any regular cardinal larger than the language of $\cat C$ and the number of non-isomorphic structures in $\cat C$ whose domain consists of two elements. 
\end{cor}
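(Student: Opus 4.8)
The plan is to realize $\cat C$ as an abstract expansion of the class of finite linear orders and then quote Proposition~\ref{prop:definability}. Write $\mathcal{LO}$ for the class of finite linear orders, viewed as structures in the one-symbol language $\{\le\}$ with order-embeddings as arrows, and let $\mathcal{LO}_{\cat C}\subseteq\mathcal{LO}$ be the subclass of those linear orders whose domain has the cardinality of some structure in $\cat C$. Since $\cat C$ is closed under induced substructures (so the set of cardinalities realized in it is downward closed) and under isomorphisms, $\mathcal{LO}_{\cat C}$ is a class of relational structures closed under induced substructures and isomorphisms. This $\mathcal{LO}_{\cat C}$ is what ``the class of linear orders'' will mean below.

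First I would produce a coherent linear order on every structure of $\cat C$. Define a diagram $\cat D:\cat C\op\to\Set$ by letting $\cat D_{\str C}$ be the finite nonempty set of all linear orders on the underlying set $C$ of $\str C$, and letting $\cat D_f$, for an embedding $f:\str C'\to\str C$, send a linear order on $C$ to its pullback $a<'b\iff f(a)<f(b)$ on $C'$; this is a functor since pullbacks of linear orders compose contravariantly. As $\cat C$ is confluent, Ramsey, locally finite and essentially small, Theorem~\ref{thm:ramsey=könig} provides a solution $(<_{\str C})_{\str C\in\cat C}$, that is, a choice of linear order $<_{\str C}$ on each $C$ such that every embedding $f:\str C'\to\str C$ becomes an order-embedding $(C',<_{\str C'})\to(C,<_{\str C})$.

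This solution gives a functor $\alpha:\cat C\to\mathcal{LO}_{\cat C}$, sending $\str C$ to $(C,<_{\str C})$ and acting as the identity on underlying maps, which by construction preserves underlying sets and maps. The heart of the argument is to check that $\alpha$ is an abstract expansion. For surjectivity: any $\str L\in\mathcal{LO}_{\cat C}$ has the size of some $\str C_0\in\cat C$, and transporting $\str C_0$ along a bijection from $C_0$ to the domain of $\str L$ that carries $<_{\str C_0}$ to the order of $\str L$ (available since finite linear orders of equal size are isomorphic) produces, using closure of $\cat C$ under isomorphism together with the fact that isomorphisms in $\cat C$ are order-isomorphisms, a structure in $\cat C$ whose $\alpha$-image is $\str L$. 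For the discrete-fibration property: given $\str C\in\cat C$ and an order-embedding $h:\str L'\to\alpha(\str C)$, the unique lift is $h:\str C'\to\str C$, where $\str C'$ is the substructure of $\str C$ induced on the image of $h$, transported along $h$ back to the domain of $\str L'$; this $\str C'$ lies in $\cat C$ by closure under induced substructures and isomorphism, and $\alpha(\str C')=\str L'$ because the solution property forces $<_{\str C'}$ to equal the pullback of $<_{\str C}$ along $h$, which is exactly the order of $\str L'$. Uniqueness of the lift is forced, as always, by the fact that an embedding into $\str C$ with domain the underlying set of $\str L'$ determines all relations.

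Finally I would apply Proposition~\ref{prop:definability} to the abstract expansion $\alpha:\cat C\to\mathcal{LO}_{\cat C}$, whose base language $\{\le\}$ has a single binary symbol: the refined size estimate in the proof of that proposition shows that the defining formula $\phi_\le(x,y)$ only involves the $\cat C$-structures on at most two points, so its size, and hence the required $\kappa$, is controlled by the language of $\cat C$ and the number of its non-isomorphic two-element structures. One obtains a quantifier-free $L_{\kappa,0}$-formula $\phi_\le$ with $(C;\phi_\le^{\str C})=\alpha(\str C)\in\mathcal{LO}_{\cat C}$ for every $\str C\in\cat C$, and since $\alpha$ is surjective this means $\phi_\le$ $L_{\kappa,0}$-defines the class of linear orders from $\cat C$. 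I expect the discrete-fibration verification for $\alpha$ — the step that upgrades the solution of $\cat D$ from a bare homomorphism of diagrams to a genuine expansion, relying on the rigidity of finite linear orders — to be the only genuinely substantive point; the rest is bookkeeping with results already in hand.
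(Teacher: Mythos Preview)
Your argument is correct and follows the same overall plan as the paper: produce a functor from $\cat C$ to linear orders using Theorem~\ref{thm:ramsey=könig}, verify that it is an abstract expansion, and then invoke Proposition~\ref{prop:definability}. The difference lies in the middle step. The paper views both $\cat C$ and $\cat L$ as expansions of the class $\cat I$ of finite sets with injections, obtains the homomorphism $\alpha:\cat C\to\cat L$ from Lemma~\ref{lem:ramseyHoms}, and then uses that $\cat L\to\cat I$ has the expansion property together with Proposition~\ref{prop:ep} and Lemma~\ref{lem:surj} to conclude that $\alpha$ is surjective and hence an expansion. You instead apply Theorem~\ref{thm:ramsey=könig} directly to the diagram of linear orders and verify surjectivity and the discrete-fibration property by hand, exploiting the rigidity of finite linear orders. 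Your route is more self-contained and makes explicit why the restriction to $\mathcal{LO}_{\cat C}$ is needed when $\cat C$ has bounded cardinalities; the paper's route shows how the result drops out of the general expansion machinery once that is in place.
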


\begin{proof}
    If $\cat C$ is closed under taking induced substructures and isomorphism, then it is an expansion of the class $\cat I$ of finite sets and injections. The class $\cat L$ of linearly ordered sets is a precompact expansion of $\cat I$, hence Lemma~\ref{lem:ramseyHoms} provides a homomorphism.
    $$
    \begin{tikzcd}
    \cat C \arrow[rd, "\rho"'] \arrow[rr, two heads,"\alpha", ""name=alpha] &        & \cat L \arrow[ld, "\pi"] \\
    & \cat I & 
    \arrow[to=2-2, from=alpha, pos=.5, phantom, "\circ"]
    \end{tikzcd}
    $$
    The expansion $\pi$ has EP, therefore $\alpha$ is surjective by Proposition~\ref{prop:ep} and an expansion by Lemma~\ref{lem:surj}. The claim follows from Proposition~\ref{prop:definability}.
\end{proof}

\begin{defin}
\label{def:core}
    We call an expansion $\cat E\xrightarrow{\pi}\cat C$ a \emph{core}, if all its endomorphisms are bijective. If $\cat E\xrightarrow{\pi}\cat C$ is an arbitrary expansion, if $\cat F\xrightarrow{\rho}\cat C$ is a core and if there are homomorphisms $\alpha$ and $\beta$ as in the following diagram, then we say that $\rho$ is \emph{the} core of $\pi$.
    $$
    \begin{tikzcd}
\cat F \arrow[r, "\alpha"] \arrow[rd, "\rho"']  
& 
\cat E \arrow[r, "\beta"] \arrow[d, "\pi"] & 
\cat F \arrow[ld, "\rho"] \\
 & \cat C   &                            
\end{tikzcd}
    $$
    The core of $\rho$ is unique up to isomorphism and (if it exists) it is given by a minimal image of endomorphisms of $\rho$. 
\end{defin}

\begin{exam}
    Let $\cat I$ be the category of sets and injective maps, and let $\cat L$ be the category of linearly ordered sets and embeddings. The expansion $\cat L\xrightarrow{\pi}\cat I$ is a core, as its only (non-identity) endomorphism is the reversal of the order $(N,<)\mapsto(N,>)$.
\end{exam}

\begin{lem}
\label{lem:coreSurj}
    Let $\cat E\xrightarrow{\pi}\cat C$ be a precompact expansion where $\cat C$ is confluent and $\cat E$ is confluent Ramsey. Then $\pi$ is a core if and only if every homomorphism of expansions to it is surjective. 
\end{lem}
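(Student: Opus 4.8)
The plan is to deduce the lemma from Proposition~\ref{prop:ep}. Since the hypotheses of that proposition ($\pi$ precompact, $\cat C$ confluent) are among ours, it tells us that ``every homomorphism of expansions to $\pi$ is surjective'' is equivalent to ``$\pi$ has EP''. Hence it suffices to prove that $\pi$ is a core if and only if $\pi$ has EP.

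For the direction ``EP $\Rightarrow$ core'', I would take any endomorphism $\beta$ of the expansion $\pi$, that is, a homomorphism of expansions from $\pi$ to itself. By Proposition~\ref{prop:ep} it is surjective on objects. Because $\pi\circ\beta=\pi$, the functor $\beta$ sends each fibre $\pi^{-1}(C)$ into itself, and surjectivity on objects forces $\beta$ to restrict to a surjection on each fibre; since the fibres are finite by precompactness, these restrictions are bijections, so $\beta$ is bijective on objects and therefore an isomorphism by the remark following Definition~\ref{def:homs}. Thus all endomorphisms of $\pi$ are invertible, i.e.\ $\pi$ is a core. This half uses neither the Ramsey property nor confluence of $\cat E$.

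For the converse ``core $\Rightarrow$ EP'' the key step is the auxiliary claim that for every object $E\in\cat E$ there is an object $C_E\in\cat C$ such that every $E'\in\pi^{-1}(C_E)$ admits an arrow $E\to E'$. To prove it, let $\cat E'$ be the full subcategory of $\cat E$ on the objects admitting no arrow from $E$; exactly as in the proof of Proposition~\ref{prop:ep}, $\pi$ restricted to $\cat E'$ is a discrete fibration, and it is precompact because its fibres are contained in those of $\pi$. If it were also surjective it would be a precompact expansion, so Lemma~\ref{lem:ramseyHoms} (with $\cat E$ confluent Ramsey) would produce a homomorphism of expansions $\cat E\to\cat E'$ over $\cat C$; composing with the inclusion $\cat E'\hookrightarrow\cat E$ gives an endomorphism of $\pi$ whose image omits $E$ and is therefore not bijective, contradicting that $\pi$ is a core. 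Hence $\pi|_{\cat E'}$ is not surjective, and the witnessing object is the desired $C_E$. Granting the claim, I would conclude just as in the converse half of Proposition~\ref{prop:ep}: fixing $C_0\in\cat C$, enumerating the finite fibre $\pi^{-1}(C_0)=\{E_1,\dots,E_k\}$, choosing $C_{E_1},\dots,C_{E_k}$ by the claim, noting that each $C_{E_i}$ is connected to $C_0$ (lift any object of the nonempty fibre $\pi^{-1}(C_{E_i})$ back along the arrow provided by the claim), and applying confluence of $\cat C$ finitely often to obtain a single $C'$ receiving arrows $C_{E_i}\to C'$ for all $i$; lifting these arrows along the discrete fibration $\pi$ to arrows into a prescribed element of $\pi^{-1}(C')$ and precomposing with the arrows from the claim shows that $C'$ has the expansion property with respect to $C_0$, and since $C_0$ was arbitrary, $\pi$ has EP.

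The main obstacle is the converse direction, and within it the observation that the auxiliary subcategory $\cat E'$ remains a precompact discrete fibration: this is what allows Lemma~\ref{lem:ramseyHoms} to turn a failure of the auxiliary claim into a proper endomorphism of $\pi$, thereby converting the purely internal ``core'' hypothesis into the combinatorial content of EP. Once the claim is available, the rest is routine manipulation of lifts and of confluence, mirroring the argument of Proposition~\ref{prop:ep}.
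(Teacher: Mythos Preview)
Your argument is correct, but it takes a longer route than the paper's. You reduce the lemma to ``core $\Leftrightarrow$ EP'' via Proposition~\ref{prop:ep}, and then establish each direction; the hard half (``core $\Rightarrow$ EP'') you prove by showing that for each $E$ the subcategory of objects not reachable from $E$ cannot be a surjective discrete fibration, since Lemma~\ref{lem:ramseyHoms} would then produce a non-bijective endomorphism of $\pi$.

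The paper bypasses EP entirely. For ``core $\Rightarrow$ every homomorphism to $\pi$ is surjective'' it argues directly: given a homomorphism $\alpha$ to $\pi$, the restriction $\pi|_{\im(\alpha)}$ is itself a precompact expansion of $\cat C$, so Lemma~\ref{lem:ramseyHoms} yields a homomorphism $\pi\to\pi|_{\im(\alpha)}$; composing with the inclusion gives an endomorphism of $\pi$ whose image lies in $\im(\alpha)$, and the core hypothesis forces $\im(\alpha)=\cat E$. The converse is immediate: endomorphisms are homomorphisms to $\pi$, hence surjective, and surjective self-maps of finite fibres are bijections. Both approaches hinge on the same mechanism---Lemma~\ref{lem:ramseyHoms} turning an auxiliary precompact sub-expansion into an endomorphism---but the paper applies it once to $\im(\alpha)$, whereas you apply it to each ``unreachable-from-$E$'' subcategory and then still have to assemble the resulting $C_E$'s using confluence. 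Your detour does yield the extra information that, under the stated hypotheses, $\pi$ is a core iff it has EP, which the paper's proof does not isolate.
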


\begin{proof}
    Assume that $\cat E\xrightarrow{\pi}\cat C$ is a core and let $\alpha$ be a homomorphism of expansions to $\pi$. Then $\im(\alpha)\xrightarrow{\pi}\cat C$ is a precompact expansion, hence there is a homomorphism from $\pi$ to it, i.e. there is an endomorphism of $\pi$ whose image is contained in the image of $\alpha$. But every endomorphism is bijective since $\pi$ is a core, therefore $\alpha$ must be surjective.

    Conversely, if every homomorphism to $\pi$ is surjective, then in particular every endomorphism of $\pi$ is surjective and surjective endomorphisms of precompact expansions are bijective.
\end{proof}

\begin{thm}
\label{thm:cores}
    Every precompact, confluent Ramsey expansion has a core.
\end{thm}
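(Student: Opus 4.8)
The plan is to realize the core of $\pi$ as a minimal element, with respect to inclusion, of the family $P$ of all subcategories of $\cat E$ of the form $\im(\beta)$, where $\beta$ ranges over the endomorphisms of $\pi$, that is, the homomorphisms of expansions $\pi\to\pi$. Three things need checking: that every such $\im(\beta)$ is again a precompact expansion of $\cat C$; that $P$ has a minimal element; and that any minimal element is a core of $\pi$. As usual, I may assume $\cat C$, and hence $\cat E$, to be small, since all notions involved are invariant under equivalence, so that $P$ is a set.

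First I would check that for an endomorphism $\beta$ of $\pi$ the restriction $\cat F:=\im(\beta)\xrightarrow{\pi|_{\cat F}}\cat C$ is a discrete fibration, hence an expansion. Surjectivity on objects is immediate from $\pi\circ\beta=\pi$. For the lifting property, given $E^*=\beta(E_0)$ in $\cat F$ and an arrow $f\colon C'\to\pi(E^*)$, one uses $\pi(E_0)=\pi(E^*)$ to lift $f$ uniquely to an arrow $h\colon E_0'\to E_0$ in $\cat E$; then $\beta(h)$ is an arrow of $\cat F$ over $f$ with target $E^*$, and its uniqueness in $\cat F$ is inherited from uniqueness in $\cat E$. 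Clearly $\pi|_{\cat F}$ is precompact. The same bookkeeping shows that the intersection of any chain of sub-expansions of $\pi$ is again a sub-expansion: its fibres are descending chains of nonempty subsets of the finite fibres of $\pi$, hence nonempty, and the lifting arrows are forced.

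The main step is the existence of a minimal element of $P$, which I would obtain via Zorn's lemma; this is where the hypotheses enter. Given a chain $\{\cat F_s\}_{s\in S}$ in $P$, let $\cat G:=\bigcap_{s}\cat F_s$, which by the previous paragraph is a precompact expansion of $\cat C$. Since $\cat E$ is confluent and Ramsey and $\pi|_{\cat G}$ is precompact, Lemma~\ref{lem:ramseyHoms} provides a homomorphism of expansions $\cat E\to\cat G$ over $\cat C$; composing it with the inclusion $\cat G\hookrightarrow\cat E$ yields an endomorphism of $\pi$ whose image is contained in $\cat G$, hence in every $\cat F_s$. So the chain has a lower bound in $P$, and as $P$ is a nonempty set (it contains $\im(\id_{\cat E})=\cat E$), Zorn's lemma gives a minimal element $\cat F=\im(\beta)\in P$.

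Finally I would verify that $\pi|_{\cat F}$ is a core. Let $\gamma$ be an endomorphism of $\pi|_{\cat F}$, let $\beta'\colon\cat E\to\cat F$ be the corestriction of $\beta$, and let $\iota\colon\cat F\hookrightarrow\cat E$ be the inclusion. Then $\iota\circ\gamma\circ\beta'$ is an endomorphism of $\pi$ with image exactly $\im(\gamma)$, so $\im(\gamma)\in P$ and $\im(\gamma)\subseteq\cat F$, whence $\im(\gamma)=\cat F$ by minimality; that is, $\gamma$ is surjective on objects. Since $\pi|_{\cat F}$ is precompact and $\gamma$ preserves fibres, $\gamma$ restricts to a surjective self-map of each finite fibre, hence is bijective, hence invertible. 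Thus $\pi|_{\cat F}$ is a core, and the inclusion $\iota$ together with the corestriction $\beta'$ exhibit it as \emph{the} core of $\pi$ in the sense of Definition~\ref{def:core}. I expect the only genuinely fiddly part to be the bookkeeping that images and chain-intersections of sub-expansions are again expansions; the conceptual content sits entirely in the use of Lemma~\ref{lem:ramseyHoms} inside the Zorn argument.
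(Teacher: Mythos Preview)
Your proposal is correct and follows essentially the same approach as the paper: both consider the poset of images of endomorphisms of $\pi$, use Lemma~\ref{lem:ramseyHoms} to show that the intersection of a chain is dominated by some element of the poset, and apply Zorn's lemma to extract a minimal element. Your write-up is in fact more thorough than the paper's, which leaves the verifications that $\im(\beta)$ is an expansion and that a minimal image is indeed a core to the remark in Definition~\ref{def:core}.
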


\begin{proof}
Let $\cat E\xrightarrow{\pi}C$ be such an expansion and consider the poset of all images of its endomorphisms, ordered by inclusion.
$$
\cat P:=\{\im(\alpha)\leq\cat E\mid \alpha\text{ endomorphism of } \pi \}
$$
We use Zorn's lemma to find a minimal element of this poset, which will be the core of $\pi$.
Consider a descending chain 
$$
\cat E\geq \cat E^1\geq \cat E^2\geq \cdots 
$$
in $\cat P$. 
We claim that the intersection $\cat E^* :=\bigcap_{n\in\mathbb{N}}\cat E^n\leq\cat E$ is still an expansion of $\cat C$. Indeed, if $C\in\cat C$, then $\pi^{-1}(C)\cap \cat E^n$ is a sequence of finite nonempty sets, whose intersection is nonempty by K\H{o}nig's lemma.
By Lemma \ref{lem:ramseyHoms}, there is a homomorphism of expansions
    $$
    \begin{tikzcd}
    \cat E \arrow[rd, "\pi"'] \arrow[rr, "\alpha", ""name=alpha] &        &  \cat E^*\arrow[ld, "\pi"] \\
    & \cat C & 
    \arrow[to=2-2, from=alpha, pos=.5, phantom, "\circ"]
    \end{tikzcd}
    $$
In particular, $\alpha$ is an endomorphism of $\pi$ whose image is contained in all $\cat E^n$, so by Zorn's lemma there is a minimal element of $\cat P$ which is the core of $\pi$.
\end{proof}

\begin{cor}
\label{cor:uniqueMinimal}
    If $\cat C$ has a precompact Ramsey expansion, then it has a minimal precompact Ramsey expansion $\cat E\xrightarrow{\pi}\cat C$, which is unique up to isomorphism. Minimality means that for every other Ramsey expansion $\cat F\xrightarrow{\rho}\cat C$ there is an expansion $\cat F\xrightarrow{\alpha}\cat E$ with $\pi\circ\alpha=\rho$.
    $$
    \begin{tikzcd}
    \cat F \arrow[rd, "\rho"'] \arrow[rr, "\alpha", ""name=alpha] &        & \cat E \arrow[ld, "\pi"] \\
    & \cat C &
    \arrow[to=2-2, from=alpha, pos=.5, phantom, "\circ"]
    \end{tikzcd}
    $$
\end{cor}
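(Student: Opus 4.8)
The idea is to produce a single expansion that does everything — a precompact, confluent Ramsey expansion $\cat E\xrightarrow{\pi}\cat C$ which is moreover a core — and then to deduce minimality and uniqueness essentially formally from the lemmas of this subsection. Throughout, I take ``Ramsey expansion'' to mean \emph{confluent} Ramsey expansion and assume $\cat C$ confluent, in line with Theorem~\ref{thm:introMinimal}; precompactness of the given expansion makes all the expanding categories locally finite and essentially small, so Theorem~\ref{thm:ramsey=könig} and the lemmas apply to them too.

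\textbf{Building $\pi$.} Start from the given precompact confluent Ramsey expansion $\cat E_0\xrightarrow{\pi_0}\cat C$. By Theorem~\ref{thm:cores} it has a core $\cat E\xrightarrow{\pi}\cat C$; this $\pi$ is again precompact, and by Definition~\ref{def:core} there are homomorphisms of expansions $\cat E\xrightarrow{u}\cat E_0\xrightarrow{v}\cat E$. Since $v\circ u$ is an endomorphism of the core $\pi$ it is invertible, so $\iota:=u\circ(v\circ u)^{-1}$ satisfies $v\circ\iota=\id_{\cat E}$, exhibiting $\cat E$ as a retract of $\cat E_0$. Next I would record the small fact that a retract of a confluent Ramsey category is confluent Ramsey: for objects $A,B$ and a finite set $N$ in $\cat E$, take a Ramsey witness $C_0\in\cat E_0$ for $\iota(A),\iota(B),N$ and check that $v(C_0)$ is a Ramsey witness for $A,B,N$ in $\cat E$, by pushing a colouring of $\hom(A,v(C_0))$ back along $v$ to a colouring of $\hom(\iota(A),C_0)$ and transporting the resulting monochromatic arrow forward again using $v\circ\iota=\id$; confluence transports even more directly. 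Thus $\pi$ is a precompact confluent Ramsey expansion which is a core, so Lemma~\ref{lem:coreSurj} shows every homomorphism of expansions into $\pi$ is surjective, and Proposition~\ref{prop:ep} then shows $\pi$ has the expansion property.

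\textbf{Minimality and uniqueness.} For minimality, let $\cat F\xrightarrow{\rho}\cat C$ be any Ramsey expansion. By Lemma~\ref{lem:ramseyHoms} (with $\cat F$ confluent Ramsey and $\pi$ precompact) there is a homomorphism of expansions $\alpha\colon\cat F\to\cat E$ with $\pi\circ\alpha=\rho$; since $\pi$ has the expansion property and $\cat C$ is confluent, $\alpha$ is surjective by Proposition~\ref{prop:ep}, hence an expansion by Lemma~\ref{lem:surj} — exactly the factorization claimed. For uniqueness, let $\cat E'\xrightarrow{\pi'}\cat C$ be another minimal precompact Ramsey expansion; applying minimality of each to the other gives expansions $a\colon\cat E'\to\cat E$ and $b\colon\cat E\to\cat E'$ with $\pi\circ a=\pi'$ and $\pi'\circ b=\pi$. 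Then $b\circ a$ is a surjective endomorphism of $\pi'$ (expansions are surjective on objects) which, since $\pi'$ is precompact, is bijective on every finite fibre $(\pi')^{-1}(C)$ and hence bijective; symmetrically for $a\circ b$. So $a$ is a bijective, therefore invertible, homomorphism of expansions (Definition~\ref{def:homs}), and $\pi\cong\pi'$.

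\textbf{Where the difficulty lies.} Everything but the construction is bookkeeping; the one genuine point is that the core handed over by Theorem~\ref{thm:cores} — specified only by the minimality of its image under endomorphisms of $\pi_0$ — is still confluent and Ramsey, since that is precisely the hypothesis Lemma~\ref{lem:coreSurj} requires. The retract computation supplying this is short, but it is the only place where the Ramsey property of $\cat E_0$ is actually consumed, so it is the step to get right. A lesser pitfall is keeping the direction of Lemma~\ref{lem:ramseyHoms} straight: in the minimality step one needs a homomorphism \emph{from} the arbitrary expansion $\rho$ \emph{into} $\pi$, available exactly because $\pi$ (not $\rho$) is the precompact one while $\cat F$ is the confluent Ramsey one.
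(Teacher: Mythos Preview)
Your proof is correct and follows essentially the same plan as the paper: take the core of the given expansion via Theorem~\ref{thm:cores}, show it is still confluent Ramsey, and then obtain minimality from Lemma~\ref{lem:ramseyHoms} together with surjectivity and Lemma~\ref{lem:surj}. The one substantive difference is how you establish Ramseyness of the core: you argue directly that a retract of a confluent Ramsey category is confluent Ramsey using Ramsey witnesses, whereas the paper defers this to Proposition~\ref{prop:preadjunction} (the ``trivial transfer''), which is the same retract principle but proved through the K\H{o}nig characterisation of Theorem~\ref{thm:ramsey=könig}; both arguments are short, and yours is arguably more self-contained at this point in the exposition. Two minor remarks: your detour through the expansion property is unnecessary, since once Lemma~\ref{lem:coreSurj} gives you that every homomorphism into $\pi$ is surjective you can feed that directly into Lemma~\ref{lem:surj} without invoking Proposition~\ref{prop:ep}; and your explicit uniqueness argument is a welcome addition, as the paper's proof leaves that part implicit.
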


\begin{proof}
    Let $\cat E\xrightarrow{\pi}\cat C$ be the core of a precompact Ramsey expansion which exists by Theorem~\ref{thm:cores}. Ramseyness of $\cat E$ will follow from a general principle which we prove in Proposition~\ref{prop:preadjunction} in the next section. Any other confluent Ramsey expansion of $\cat C$ will have a homomorphism to $\pi$ by Lemma~\ref{lem:ramseyHoms} that is an expansion by Lemmas~\ref{lem:surj} and \ref{lem:coreSurj}.
\end{proof}

\begin{cor}
    Let $\cat C$ be a class of relational structures with a precompact Ramsey expansion $\cat E\xrightarrow{\pi} \cat C$ that has the expansion property.
    Then every other Ramsey expansion $\cat F\xrightarrow{\rho}\cat C$ can $L_{\kappa,0}$-define $\cat E$ without quantifiers, where $\kappa$ is any regular cardinal larger than the number of symbols in the language of $\cat F$ and the number of non-isomorphic structures in $\cat F$. In particular, $\cat E$ is unique up to quantifier-free $L_{\kappa,0}$-interdefinability.
\end{cor}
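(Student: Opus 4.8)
The plan is to feed the homomorphism machinery of this subsection into the definability result, Proposition~\ref{prop:definability}, exactly as was done for linear orders in Corollary~\ref{cor:order}. First, using Proposition~\ref{prop:concrete expansions}, I would replace $\cat E\xrightarrow{\pi}\cat C$ and $\cat F\xrightarrow{\rho}\cat C$ by isomorphic \emph{concrete} expansions, so that $\pi$ and $\rho$ forget relations and in particular preserve underlying sets and maps. Since $\cat F$ is a confluent Ramsey expansion and $\pi$ is precompact, Lemma~\ref{lem:ramseyHoms} produces a homomorphism of expansions $\alpha\colon\cat F\to\cat E$ with $\pi\circ\alpha=\rho$. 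As $\pi$ has the expansion property and $\cat C$ is confluent, Proposition~\ref{prop:ep} shows that $\alpha$ is surjective on objects, and Lemma~\ref{lem:surj} then upgrades $\cat F\xrightarrow{\alpha}\cat E$ to an expansion in its own right.

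Next I would check that $\alpha$ satisfies the hypotheses of Proposition~\ref{prop:definability}, now with $\cat F$ in the role of the ambient class and $\cat E$ in the role of the base. The point is that $\alpha$ preserves underlying sets and maps: from $\pi\circ\alpha=\rho$ and the fact that $\pi$ and $\rho$ forget relations, $\alpha(F)$ has the same underlying set as $F$ and $\alpha(g)$ is the same function as $g$. Granting that $\cat E$ and $\cat F$ are closed under induced substructures (a harmless assumption for Ramsey classes), Proposition~\ref{prop:definability} then yields, for every relational symbol of $\cat E$, a quantifier-free $L_{\kappa,0}$-formula in the language of $\cat F$ which defines it throughout $\cat F$, as soon as $\kappa$ exceeds the size of the language of $\cat E$ and the number of non-isomorphic structures in $\cat F$. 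In the concrete presentation of $\pi$ the language of $\cat E$ is that of $\cat C$ enlarged by one symbol per isomorphism type of $\cat E$, while $\alpha$ being onto bounds the number of isomorphism types of $\cat E$ by that of $\cat F$; since $\kappa$ is an infinite cardinal larger than both the language of $\cat F$ and the number of non-isomorphic structures in $\cat F$, it is automatically large enough. Hence $\cat F$ quantifier-free $L_{\kappa,0}$-defines $\cat E$.

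For the final assertion, apply the statement just proved to two precompact confluent Ramsey expansions $\cat E$ and $\cat E'$ of $\cat C$ that both have the expansion property: since $\cat E'$ is in particular a Ramsey expansion, it quantifier-free $L_{\kappa,0}$-defines $\cat E$, and symmetrically $\cat E$ defines $\cat E'$, for any $\kappa$ large enough for both — so $\cat E$ and $\cat E'$ are quantifier-free $L_{\kappa,0}$-interdefinable. (The sharper uniqueness up to isomorphism of expansions is Corollary~\ref{cor:uniqueMinimal}.)

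I do not expect any genuinely hard step: the mathematical content is already packaged in the cited lemmas. The one place that needs care is the bookkeeping of the second paragraph — confirming that $\alpha$ really is a \emph{concrete} expansion, so that Proposition~\ref{prop:definability} applies verbatim, and that the crude cardinal bound it produces is absorbed by the $\kappa$ in the statement. A secondary point worth making explicit is the standing convention that the Ramsey classes under consideration are confluent (equivalently, have the joint embedding property on each connected component), which is what makes Lemma~\ref{lem:ramseyHoms} and Proposition~\ref{prop:ep} applicable to $\cat F$ and to $\pi$ here.
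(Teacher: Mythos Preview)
Your proposal is correct and follows precisely the route the paper intends: the corollary is stated without proof, but it is the evident generalisation of Corollary~\ref{cor:order}, whose proof you reproduce step for step (Lemma~\ref{lem:ramseyHoms} for the homomorphism, Proposition~\ref{prop:ep} and Lemma~\ref{lem:surj} to upgrade it to a surjective expansion, then Proposition~\ref{prop:definability} for definability). Your bookkeeping on the cardinal bound and on $\alpha$ preserving underlying sets and maps is exactly the kind of detail the paper suppresses, and your caveats about confluence and closure under induced substructures are appropriate standing assumptions in this context.
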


\section{Transferring the Ramsey property}
\label{sec:transfers}

In this section, we discuss how Theorem~\ref{thm:ramsey=könig} might be used to transfer the Ramsey property from one category to another.
We assume that all the categories mentioned are essentially small and locally finite, so that Theorem~\ref{thm:ramsey=könig} applies.

\subsection{The trivial transfer}

\begin{prop}
\label{prop:preadjunction}
    Let $\cat B$ be a confluent Ramsey category and let $\cat C$ be another category. If there are functors $F$ and $G$ and a natural transformation $\Delta$ as in the following diagram, then $\cat C$ is also confluent and Ramsey.
    $$
 \begin{tikzcd}
\cat C  \arrow[rr, "\id_{\cat C}", ""name=id] 
        \arrow[rd, "F"'] &   & 
\cat C \\
& \cat B \arrow[ru, "G"']       & \arrow[from=id, to=2-2, "\Delta", Rightarrow, shorten >=1ex, shorten <=1ex]      
\end{tikzcd}
$$
\end{prop}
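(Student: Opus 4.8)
The plan is to deduce everything from Theorem~\ref{thm:ramsey=könig}. Under the standing assumption that $\cat B$ and $\cat C$ are essentially small and locally finite, that theorem reduces the goal (confluence \emph{and} the Ramsey property of $\cat C$) to the single König-type statement: every diagram $\cat D:\cat C\op\to\Set$ with finite nonempty values has a solution. So I would fix such a $\cat D$ and manufacture a solution for it out of a solution over $\cat B$.

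The idea is to push $\cat D$ over to $\cat B$ along $G$ and pull the resulting solution back along $\Delta$. Precisely, precompose to get $\cat D\circ G\op:\cat B\op\to\Set$, which sends $E\in\cat B$ to $\cat D_{G(E)}$; these sets are still finite and nonempty. Since $\cat B$ is confluent Ramsey, Theorem~\ref{thm:ramsey=könig} applied to $\cat B$ yields a solution, i.e.\ a family $y_E\in\cat D_{G(E)}$ with $\cat D_{G(h)}(y_E)=y_{E'}$ for every arrow $E'\xrightarrow{h}E$ in $\cat B$. Now define, for each $C\in\cat C$,
$$
x_C:=\cat D_{\Delta_C}\big(y_{F(C)}\big)\in\cat D_C ,
$$
using the component $\Delta_C:C\to G(F(C))$ of the unit (so that $\cat D_{\Delta_C}:\cat D_{G(F(C))}\to\cat D_C$). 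I claim $(x_C)_{C\in\cat C}$ solves $\cat D$: for an arrow $A\xrightarrow{f}B$ in $\cat C$, contravariant functoriality of $\cat D$, the naturality identity $\Delta_B\circ f=G(F(f))\circ\Delta_A$, and the defining property of $(y_E)$ applied to $F(f):F(A)\to F(B)$ give
\begin{align*}
\cat D_f(x_B)
&=\cat D_f\big(\cat D_{\Delta_B}(y_{F(B)})\big)
=\cat D_{\Delta_B\circ f}(y_{F(B)}) \\
&=\cat D_{G(F(f))\circ\Delta_A}(y_{F(B)})
=\cat D_{\Delta_A}\big(\cat D_{G(F(f))}(y_{F(B)})\big)
=\cat D_{\Delta_A}(y_{F(A)})=x_A .
\end{align*}
Hence $\cat D$ has a solution, and Theorem~\ref{thm:ramsey=könig} then yields that $\cat C$ is confluent and Ramsey.

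I do not expect a genuine obstacle: the argument is bookkeeping, and the only point that needs care is the variance — $\cat D$ is contravariant, one precomposes with $G\op$ (not $F\op$), and the solution must travel \emph{back} along the unit $\Delta$ rather than forward. As a sanity check, confluence of $\cat C$ can also be seen by hand, independently of Ramseyness: a zigzag in $\cat C$ is carried by $F$ to a zigzag in $\cat B$, a cocone $F(A)\to E\leftarrow F(B)$ obtained from confluence of $\cat B$ gives $G(F(A))\to G(E)\leftarrow G(F(B))$ in $\cat C$, and precomposing the two legs with $\Delta_A$ and $\Delta_B$ produces a cocone $A\to G(E)\leftarrow B$.
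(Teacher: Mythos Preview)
Your proposal is correct and follows essentially the same approach as the paper: both push $\cat D$ to $\cat B$ via $G\op$, invoke Theorem~\ref{thm:ramsey=könig} on $\cat B$ to get a solution, and then transport it back using $F$ and the components of $\Delta$. Your verification of the solution condition via the naturality square $\Delta_B\circ f=G(F(f))\circ\Delta_A$ spells out exactly the step the paper summarizes by the chain $\lim(\cat D\circ G\op)\to\lim(\cat D\circ G\op\circ F\op)\to\lim\cat D$.
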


\begin{proof}
    Take a diagram $\cat D:\cat C\op\to\Set$ such that $\cat D_C$ is finite and nonempty for any $C\in\cat C$.  Then $\cat D_{G(B)}$ is finite and nonempty for any $B\in\cat B$, implying that the diagram $\cat D\circ G\op:B\op\to\Set $ has a solution. Moreover, $F$ and $\Delta$ respectively induce maps
    $$
    \emptyset\neq\lim(\cat D\circ G\op)
    \xrightarrow{\tilde{F}}\lim(\cat D\circ G\op\circ F\op) 
    \xrightarrow{\Tilde{\Delta}} \lim\cat D
    $$
    implying that also $\cat D$ has a solution. Explicitly, if $(x_B)_{B\in\cat B}$ is a solution of $\cat D\circ G\op$, then $(x_{F (C)})_{C\in\cat C}$ is a solution of $\cat D\circ G\op\circ F\op$ and $\big(\cat D_{\Delta_C}(x_{F(C)})\big)_{C\in\cat C}$ is a solution of $\cat D$.
\end{proof}
\begin{rem}
Proposition~\ref{prop:preadjunction} connects three known Ramsey transfer principles, namely range-rigid functions \cite[Lemma~10]{mottet2021cores}, semi-retractions  \cite[Theorem~3.5]{scow2021semiretractions} and pre-adjunctions \cite[Theorem 3.2]{mavsulovic2018pre}. 
Indeed, one can show that the former two are both equivalent to the special case of Proposition~\ref{prop:preadjunction} when $\cat B$ and $\cat C$ are classes of relational structures, $F$ and $G$ preserve the underlying sets and $\alpha$ is the identity.

Theorem~3.2 in \cite{mavsulovic2018pre} is more general than Proposition~\ref{prop:preadjunction}, in particular $F$, $G$ and $\alpha$ as above induce a pre-adjunction in the following way.
$$
\hom(F(C), B) \xrightarrow{G}
\hom(G(F(C)),G(B))\xrightarrow{\Delta_C^*} 
\hom(C,G(B))
$$
However, all examples of pre-adjunctions in \cite{mavsulovic2018pre} are induced in this way.
\end{rem}

\begin{cor}
    Let $\cat E\xrightarrow{\pi}\cat C$ be an expansion where $\cat E$ is confluent Ramsey, and let $\cat F\xrightarrow{\rho}\cat C$ be its core (see Definition~\ref{def:core}). Then $\cat F$ is confluent and Ramsey.
\end{cor}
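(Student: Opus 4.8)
The plan is to deduce this from Proposition~\ref{prop:preadjunction}, with the confluent Ramsey category taken to be $\cat E$ and the ``other'' category taken to be $\cat F$; concretely, I will exhibit $\cat F$ as a categorical retract of $\cat E$.

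First I would unwind Definition~\ref{def:core}: since $\rho$ is the core of $\pi$, there are homomorphisms of expansions $\alpha\colon\cat F\to\cat E$ and $\beta\colon\cat E\to\cat F$, i.e.\ functors with $\pi\circ\alpha=\rho$ and $\rho\circ\beta=\pi$. The composite $\beta\circ\alpha\colon\cat F\to\cat F$ is then an endomorphism of the expansion $\rho$, because $\rho\circ(\beta\circ\alpha)=(\rho\circ\beta)\circ\alpha=\pi\circ\alpha=\rho$. As $\rho$ is a core this endomorphism is bijective, hence --- recall from the remark after Definition~\ref{def:homs} that bijective homomorphisms of expansions are invertible --- it is an isomorphism of categories. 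Write $\varphi:=\beta\circ\alpha$ and let $\varphi^{-1}\colon\cat F\to\cat F$ be its inverse functor.

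Now I would apply Proposition~\ref{prop:preadjunction}. The naive choice $F:=\alpha$, $G:=\beta$ is not quite enough, since the proposition asks for a natural transformation $\id_{\cat F}\Rightarrow G\circ F=\varphi$, and an automorphism of a category need not be naturally isomorphic to the identity. The fix is to absorb $\varphi$ into $G$: set $F:=\alpha\colon\cat F\to\cat E$ and $G:=\varphi^{-1}\circ\beta\colon\cat E\to\cat F$, so that
\[
G\circ F=\varphi^{-1}\circ\beta\circ\alpha=\varphi^{-1}\circ\varphi=\id_{\cat F},
\]
exhibiting $\cat F$ as a retract of $\cat E$ inside $\Cat$; the natural transformation $\Delta\colon\id_{\cat F}\Rightarrow G\circ F$ required by Proposition~\ref{prop:preadjunction} may then be taken to be the identity. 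Proposition~\ref{prop:preadjunction}, applied with $\cat B=\cat E$ (confluent and Ramsey by hypothesis), yields immediately that $\cat F$ is confluent and Ramsey.

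I do not expect a real obstacle; the argument is bookkeeping around Proposition~\ref{prop:preadjunction}, and the only genuine idea is the twist by $\varphi^{-1}$ needed to land in its hypotheses. The one place that warrants a sentence is the passage from ``bijective on objects'' to ``isomorphism of categories'' for a homomorphism of expansions: this is exactly what is recorded after Definition~\ref{def:homs}, and it holds because such a homomorphism is automatically full and faithful by uniqueness of lifts in the discrete fibrations $\pi$ and $\rho$. (If one is fussy about Proposition~\ref{prop:preadjunction} delivering the word ``Ramsey'' for $\cat F$, which strictly requires $\cat F$ to be locally finite, note that $\alpha$ is faithful, so the homsets of $\cat F$ embed into those of $\cat E$; in any case this is subsumed by the standing assumption of the section that every category in sight is essentially small and locally finite.)
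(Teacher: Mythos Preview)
Your proposal is correct and follows essentially the same approach as the paper: both arguments observe that $\beta\circ\alpha$ is an invertible endomorphism of the core $\rho$, set $F=\alpha$ and $G=(\beta\circ\alpha)^{-1}\circ\beta$ so that $G\circ F=\id_{\cat F}$, and invoke Proposition~\ref{prop:preadjunction} with $\Delta$ the identity. Your additional remarks justifying invertibility via the observation after Definition~\ref{def:homs} and addressing local finiteness are accurate and slightly more explicit than the paper's own presentation.
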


\begin{proof}
    By definition, there are homomorphisms $\alpha$ and $\beta$ of expansions as in the following diagram.
    $$
    \begin{tikzcd}
    \cat F \arrow[r, "\alpha"] \arrow[rd, "\rho"']
    & 
    \cat E \arrow[r, "\beta"] \arrow[d, "\pi"] & 
    \cat F \arrow[ld, "\rho"] \\
    & \cat C   &                            
\end{tikzcd}
    $$
Since $\rho$ is a core, the functor $\beta\circ \alpha$ is invertible and the claim follows from Proposition~\ref{prop:preadjunction} and the following diagram.
    $$
    \begin{tikzcd}
    \cat F \arrow[rd, "\alpha"'] \arrow[rr, "\id_{\cat F}", ""name=alpha] &        &  \cat F \\
    & \cat E\arrow[ru, "(\beta\circ \alpha)^{-1}\circ \beta"'] & 
    \arrow[to=2-2, from=alpha, pos=.5, phantom, "\circ"]
    \end{tikzcd}
    $$
\end{proof}

\subsection{Opfibrations}
We prove a new Ramsey transfer theorem, namely that a Grothendieck opfibration is Ramsey whenever its base and its fibers are confluent and Ramsey. This provides a common generalization of three known transfers; products \cite{sokic2012ramsey}, discrete opfibrations \cite{mavsulovic2023fibrations} (hence slice categories and adding constants \cite{bodirsky2013decidability}) and blowups. We use the latter as an illustration of the Grothendieck construction.

\begin{exam}
\label{ex:partitions}
Consider the following category $\cat C$ of set partitions. Objects in $\cat C$ are tuples $(N, (M_n)_{n\in N})$, where $N$ is a set of blocks and $M_n$ is the $n$-th block. Arrows 
$$
(N, (M_n)_{n\in N})\to(N', (M_{n'}')_{n'\in N'})
$$ 
in $\cat C$ are tuples $(f,(g_n)_{n\in N})$ where $N\xrightarrow{f}N'$ is an injective map and every $g_n$ is an injective map $M_n\to M'_{f(n)}$.

$\cat C$ has a partition into subclasses, for every set $N$ let $\cat C_N$ consist of partitions with exactly $N$ blocks. For each injective map $N\xrightarrow{f}N'$, define a functor $\cat C_N\to\cat C_{N'}$ which adds empty blocks for all elements $n'\in N'$ which are not in the image of $f$ and leaves all other blocks unchanged. The class $\cat C$ can be recovered from the fibers $\cat C_N$ and the functors obtained from the $f$'s using the following construction. 
\end{exam}

\begin{defin}[Grothendieck construction]
    Consider a functor $\cat S: \cat R\to \Cat $ form a category $\cat R$ to the category of categories. We define a new category $\Elts\cat S$ as follows. The objects are pairs $(R,S)$, where $R$ is an object in $\cat R$ and $S$ is an object in $\cat S_R$. The arrows between pairs $(R,S)$ and $(R',S')$ are pairs $(f,\phi)$, where $R\xrightarrow{f}R'$ is an arrow in $\cat R$ and $\cat S_f(S)\xrightarrow{\phi} S'$ is an arrow in $\cat S_{R'}$.
    \begin{align*}
        \mathrm{Ob}(\textstyle\Elts\cat C) &= \{ ( R,S) \mid R\in\cat R, S\in\cat S_R\} \\
        \hom((R,S),(R',S')) &= \{ (f, \phi)\mid R\xrightarrow{f}R', \cat S_f(S)\xrightarrow{\phi} S' \}
    \end{align*}
    Categories constructed this way are called \emph{opfibrations} over $\cat R$, the categories $\cat S_R$ are called its \emph{fibers}, see, for example, \cite{LoregianRiehl2020fibrations}.
\end{defin}

\begin{exam}
    Let $\cat I$ be the category of finite sets and injective maps, and let $\cat S:\cat L\to\Cat$ send $N$ to $\cat I^N$. For an injective map $N\xrightarrow{f}N'$, let the functor $\cat I^N\xrightarrow{\cat S_f}\cat I^{N'}$ add empty blocks as in Example~\ref{ex:partitions}. Then $\Elts\cat S$ is the category of partitions described above.
\end{exam}

\begin{thm}
\label{thm:opfibrations}
    Let $\cat R$ be a confluent Ramsey category and let $\cat S:\cat R\to \Cat$ be a functor such that also $\cat S_R$ is a confluent Ramsey category for all objects $R\in \cat R$. If all above categories are essentially small and locally finite, then $\Elts\cat S$ is confluent and Ramsey.
\end{thm}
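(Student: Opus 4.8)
The plan is to prove this by verifying the K\H{o}nig condition of Theorem~\ref{thm:ramsey=könig}: since $\Elts\cat S$ is assumed essentially small and locally finite, it suffices to show that every diagram $\cat D:(\Elts\cat S)\op\to\Set$ with all $\cat D_{(R,S)}$ finite and nonempty admits a solution, and confluence of $\Elts\cat S$ then comes for free. Throughout I would use the opfibration structure via the factorization of an arrow $(f,\phi):(R,S)\to(R',S')$ as the \emph{horizontal} arrow $(f,\id_{\cat S_f(S)}):(R,S)\to(R',\cat S_f(S))$ followed by the \emph{vertical} arrow $(\id_{R'},\phi):(R',\cat S_f(S))\to(R',S')$, so that a solution of $\cat D$ is precisely a family $(x_{(R,S)})$ that is compatible with all horizontal and all vertical arrows.

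First I would collapse the fibers. For each $R\in\cat R$ the vertical arrows over $R$ form a copy of $\cat S_R$ inside $\Elts\cat S$, and restricting $\cat D$ along this inclusion gives a diagram $(\cat S_R)\op\to\Set$ of finite nonempty sets; since $\cat S_R$ is confluent and Ramsey, Theorem~\ref{thm:ramsey=könig} provides a nonempty limit $\cat D'_R:=\lim(\cat D|_{\cat S_R})$. The horizontal arrows then turn $R\mapsto\cat D'_R$ into a diagram $\cat D':\cat R\op\to\Set$, where $\cat D'_f$ sends a compatible family $(y_{S'})_{S'}$ to $\big(\cat D_{(f,\id)}(y_{\cat S_f(S)})\big)_{S}$; a short check using functoriality of $\cat S$ and $\cat D$ shows this is well defined and functorial, and unwinding the horizontal/vertical factorization gives a canonical bijection $\lim_{\cat R\op}\cat D'\cong\lim_{(\Elts\cat S)\op}\cat D$. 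So it remains to produce a solution of $\cat D'$ over the confluent Ramsey category $\cat R$.

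The catch, and the step I expect to be the main obstacle, is that the sets $\cat D'_R$ need not be finite, so Corollary~\ref{cor:compactRamsey} does not apply directly --- the Ramsey property only supplies witnesses for \emph{finite} palettes. The remedy is that each $\cat D'_R$ is a closed subspace of the compact product $\prod_{S\in\cat S_R}\cat D_{(R,S)}$ of finite discrete spaces, hence a compact totally disconnected space, and the maps $\cat D'_f$ are continuous; for a finite subset $\Sigma\subseteq\mathrm{Ob}(\cat S_R)$ write $q^R_\Sigma:\cat D'_R\twoheadrightarrow Q^R_\Sigma$ for the projection onto the (finite) image in $\prod_{S\in\Sigma}\cat D_{(R,S)}$, and note that these maps separate the points of $\cat D'_R$. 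As in the proof of Corollary~\ref{cor:compactRamsey}, after decomposing $\cat R$ into connected components it suffices, by Tychonoff's theorem exactly as in Lemma~\ref{lem:compactness1}, to find an $M$-solution of $\cat D'$ for every finite set $M$ of arrows. Fixing such an $M$ with object set $B_1,\dots,B_n$ and choosing $C_0$ with arrows $B_i\to C_0$, I would for each tuple $\vec\Sigma=(\Sigma_1,\dots,\Sigma_n)$ of finite subsets run the iteration of Proposition~\ref{prop:iterateRamsey} verbatim, but using $Q^{B_i}_{\Sigma_i}$ as the palette at the $i$-th Ramsey witness and composing the colourings $\chi_i$ with $q^{B_i}_{\Sigma_i}$; the resulting tuple is an ``$M$-solution modulo $\vec\Sigma$'', so the closed set $\mathrm{Sol}_{\vec\Sigma}\subseteq\prod_R\cat D'_R$ of all tuples $x$ satisfying $q^{B_i}_{\Sigma_i}(x_{B_i})=q^{B_i}_{\Sigma_i}(\cat D'_f(x_{B_j}))$ for every $(B_i\xrightarrow{f}B_j)\in M$ is nonempty.

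These sets $\mathrm{Sol}_{\vec\Sigma}$ are downward directed in $\vec\Sigma$ (take componentwise unions), so by compactness of $\prod_R\cat D'_R$ their intersection is nonempty; since the $q^{B_i}_{\Sigma_i}$ separate points, that intersection is exactly $\lim_M\cat D'$, so $\lim_M\cat D'\neq\emptyset$. A final application of compactness over all finite $M$ yields $\lim\cat D'\neq\emptyset$, hence a solution of $\cat D$, which completes the proof. The only genuinely new ingredient beyond the rest of the paper is this ``profinite'' upgrade of Corollary~\ref{cor:compactRamsey} --- forced by the possibly infinite fibre limits $\cat D'_R$ --- while everything else is bookkeeping about the Grothendieck construction; if desired, the upgrade could be isolated as a lemma stating that diagrams of nonempty compact totally disconnected spaces over a confluent Ramsey category have nonempty limit.
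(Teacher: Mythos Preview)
Your proof is correct and shares the paper's basic strategy of reducing to a diagram over the base $\cat R$ by taking fiberwise limits, but the two diverge at the key technical point. You take the \emph{full} fiber limits $\cat D'_R=\lim\cat D|_{\cat S_R}$, observe that they may be infinite, and then develop a profinite upgrade of Corollary~\ref{cor:compactRamsey}: run the Proposition~\ref{prop:iterateRamsey} iteration with the finite quotients $Q^{B_i}_{\Sigma_i}$ as palettes, and close up with a double compactness argument in $\prod_R\cat D'_R$. The paper sidesteps the infinity altogether: it first fixes a finite set $M$ of arrows in $\Elts\cat S$, and for each $R$ restricts to the \emph{finite} full subcategory $\hat{\cat S}_R\leq\cat S_R$ spanned by the objects $\cat S_f(S_i^j)$ obtained by pushing the finitely many fiber objects occurring in $M$ forward along arrows $R_i\to R$ (finiteness uses local finiteness of $\cat R$). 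Then $\hat{\cat D}_R:=\lim\cat D|_{\hat{\cat S}_R}$ is already a finite nonempty set, and the ordinary Theorem~\ref{thm:ramsey=könig} applies directly to $\hat{\cat D}:\cat R\op\to\Set$ to produce an $M$-solution. Your route yields a clean global bijection $\lim\cat D'\cong\lim\cat D$ and isolates a reusable lemma (diagrams of nonempty Stone spaces over a confluent Ramsey category have nonempty limit); the paper's route is more elementary, stays entirely within the finite-set framework already set up, and avoids the second layer of compactness.
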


Let us first sketch the proof. We need to show that diagrams in the shape of $\Elts\cat S$ have a solution, a tuple compatible with all arrows. The partition of $\Elts\cat S$ into its fibers $\cat S_R$ distinguishes two classes of arrows: the ones inside the fibers 
$$(R,S)\xrightarrow{(\id_R,\phi)}(R,S')$$ 
and the ones 'orthogonal' to the fibers 
$$(R,S)\xrightarrow{(f,\id)} (R', \cat S_f(S)).$$ 
The first kind is no problem, every fiber is confluent Ramsey, hence there are partial solutions on every fiber. To deal with the second kind of arrows, note that any $R\xrightarrow{f}R'$ induces a map from partial solutions on $\cat S_{R'}$ to partial solutions on $\cat S_R$ similar to Proposition~\ref{prop:preadjunction}.
$$
    (x_{S'})_{S'\in\cat S_{R'}}\mapsto 
    \big(\cat D_{f,\id} (x_{\cat S_f(S)}) \big)_{S\in\cat S_R}
$$
This defines a diagram $\cat R\op\to \Set,R\mapsto \lim \cat D|_{\cat S_R}$ whose solutions would correspond to solutions of $\cat D$. However, there might be infinitely many partial solutions to $\cat S_R$, hence the entire argument must be restricted to finite subcategories.

\begin{proof}
    Let $\cat D:(\Elts\cat S)\op\to \Set$ be a diagram, and let $M$ be a finite set of arrows in $\Elts\cat S$. We show that $\cat D$ has an $M$-solution which will prove the claim by Lemma~\ref{lem:compactness1} and Theorem~\ref{thm:ramsey=könig}.

    Since $M$ is finite, there is a finite list $(R_i)$ of objects in $\cat R$ and finite lists $(S_i^j)$ of objects in $\cat S_{R_i}$ such that every arrow in $M$ is between pairs of the form $(R_i,S_i^j)$. 

    Let $\hat{\cat S}:\cat R\to\Cat $ be the functor that sends $R\in \cat R$ to the full subcategory $\hat{\cat S}_R\leq\cat S_R$, whose objects are of the form $\cat S_f(S_i^j)\in \cat S_R$, where $R_i\xrightarrow{f}R$ is any morphism from some $R_i$ to $R$. Note that for each $R$ there are only finitely many such objects, as there are finitely many $R_i$ and hence only finitely many such arrows $f$. 
    Clearly $\Elts\hat{\cat S}$ is a subcategory of $\Elts\cat S$ which contains all morphisms in $M$, so if there is a partial solution to $\cat D$ on $\Elts\hat{\cat S}$, then there is also an $M$-solution.

    To find a partial solution to $\cat D$ on $\Elts\hat{\cat S}$, define a diagram $\hat{\cat D}: \cat R\op\to\Set$ that sends $R$ to $\lim\cat D|_{\hat{\cat S}_R}$, the set of partial solutions to $\cat D$ on $\hat{\cat S}_R$.
    This set is finite, since $\hat{\cat S}_R$ contains only finitely many objects and nonempty, because there are partial solutions to $\cat D$ on $\cat S_R$. In particular, there are partial solutions on $\hat{\cat S}_R$.
    For arrows $R\xrightarrow{f}R'$, we define $\hat{\cat D}_f:\lim\cat D|_{\hat{\cat S}_{R'}}\to \lim\cat D|_{\hat{\cat S}_R}$ as the following map. 
    $$
    (x_{S'})_{S'\in\hat{\cat S}_{R'}}\mapsto 
    \big(\cat D_{f,\id} (x_{\cat S_f(S)}) \big)_{S\in\hat{\cat S} (R)} 
    $$
    Since $R$ is also confluent and Ramsey, $\hat{\cat D}$ will have a solution
    $$
    \big((x_{R,S})_{S\in\hat{\cat S}_R}\big)_{R\in \cat R}
    $$
    where for each $R\in\cat R$, the tuple $(x_{R,S})_{S\in\hat{\cat S}_R}$ is a partial solution of $\cat D$ on $\hat{\cat S}_R$. To verify that $(x_{R,S})_{(R,S)\in\Elts\hat{\cat S}}$ is a partial solution of $\cat D$, take an arrow $(R,S)\xrightarrow{(f,\phi)}(R',S')$ in $\Elts\hat{\cat S}$ and compute the following.
    \begin{align*}
    \cat D_{f,\phi}(x_{R',S'})&=
    \cat D_{f,\id} \big(\cat D_{\id,\phi} (x_{R',S'})\big) \\&=  
    \cat D_{f,\id} (x_{R',\cat S_f(S)}) \\&=
    x_{R,S}
    \end{align*}
    The second equality holds because $(X_{R',S'})_{S'\in \cat S_{R'}}$ is a partial solution on the fiber $\cat S_{R'}$ and the third holds by definition of $\hat{\cat D}$.
\end{proof}

\begin{cor}[Theorem 2 in \cite{sokic2012ramsey}]
\label{cor:prod}
    If $\cat C$ and $\cat D$ are two essentially small locally finite confluent Ramsey categories, then their product $\cat C\times\cat D$ is also confluent and Ramsey.
\end{cor}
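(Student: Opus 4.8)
The plan is to exhibit the product $\cat C\times\cat D$ as a Grothendieck construction and then apply Theorem~\ref{thm:opfibrations}. Concretely, I would take the base to be $\cat R:=\cat C$ and let $\cat S\colon\cat C\to\Cat$ be the \emph{constant} functor at $\cat D$: it sends every object $R\in\cat C$ to $\cat S_R:=\cat D$ and every arrow $f$ of $\cat C$ to $\cat S_f:=\id_{\cat D}$. This is a functor, since it visibly preserves identities and composition.

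Next I would check that $\Elts\cat S$ is isomorphic to $\cat C\times\cat D$. Unwinding the definition, an object of $\Elts\cat S$ is a pair $(R,S)$ with $R\in\cat C$ and $S\in\cat S_R=\cat D$, i.e.\ exactly an object of $\cat C\times\cat D$. An arrow $(R,S)\to(R',S')$ is a pair $(f,\phi)$ with $R\xrightarrow{f}R'$ in $\cat C$ and $\cat S_f(S)=S\xrightarrow{\phi}S'$ in $\cat D$, i.e.\ exactly an arrow of $\cat C\times\cat D$. Because every transition functor $\cat S_f$ is the identity, composition in $\Elts\cat S$ collapses to componentwise composition, which is precisely composition in $\cat C\times\cat D$; identities match as well. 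Hence the identity assignment on objects and arrows is an isomorphism of categories $\Elts\cat S\cong\cat C\times\cat D$.

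Finally I would verify the hypotheses of Theorem~\ref{thm:opfibrations}: the base $\cat R=\cat C$ is confluent and Ramsey by assumption; each fiber $\cat S_R=\cat D$ is confluent and Ramsey by assumption; and all categories involved are essentially small and locally finite, since this holds for $\cat C$ and $\cat D$ and passes to the product (a hom-set of $\cat C\times\cat D$ is the product of two finite hom-sets, and a skeleton of $\cat C\times\cat D$ is the product of skeletons of $\cat C$ and $\cat D$). Theorem~\ref{thm:opfibrations} then gives that $\Elts\cat S$, hence $\cat C\times\cat D$, is confluent and Ramsey.

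I expect no substantive obstacle: the argument is essentially the observation that the product is the Grothendieck construction of a constant functor. The only point requiring any care is the bookkeeping in the identification $\Elts\cat S\cong\cat C\times\cat D$, specifically noting that the constancy of $\cat S$ is exactly what makes the twisted composition of the Grothendieck construction coincide with ordinary coordinatewise composition in the product.
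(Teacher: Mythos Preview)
Your proposal is correct and follows exactly the paper's approach: the paper's proof is the single line ``If $\cat S:\cat C\to\Cat$ is the constant functor $C\mapsto \cat D$, then $\Elts \cat S=\cat C\times \cat D$.'' You have simply supplied the routine verifications (the identification $\Elts\cat S\cong\cat C\times\cat D$ and the size hypotheses) that the paper leaves implicit.
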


\begin{proof}
    If $\cat S:\cat C\to\Cat$ is the constant functor $C\mapsto \cat D$, then $\Elts \cat S=\cat C\times \cat D$.
\end{proof}

\begin{cor}[Theorem 5.1.c in \cite{mavsulovic2023fibrations}]
\label{cor:slice }
    If $\cat C$ is an essentially small locally finite confluent Ramsey category and $\cat S:A\to\Set$ a functor, then $\Elts \cat S$ is also confluent and Ramsey. In particular, if $A\in\cat C$ is an object, then the slice category $A\backslash\cat C$ is confluent Ramsey.
\end{cor}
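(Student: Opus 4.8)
The plan is to read this off from the opfibration transfer Theorem~\ref{thm:opfibrations} by viewing $\Set$ as the full subcategory of $\Cat$ on the discrete categories. Every set $X$ becomes a category with object set $X$ whose only arrows are identities, and this assignment is full and faithful, so a functor $\cat S\colon\cat C\to\Set$ may be regarded as a functor $\cat C\to\Cat$ all of whose values are discrete. The key observation is that \emph{every discrete category is confluent and Ramsey}. Confluence holds vacuously: in a discrete category each object is its own connected component, so Definition~\ref{def:confluent} imposes no condition. For the Ramsey property (Definition~\ref{def:ramsey}), given objects $A$, $B$ and a finite set $N$, take $C:=B$; the only arrow $B\to C$ is $\id_B$, and $\hom(A,C)=\hom(A,B)$ is either empty (if $A\neq B$) or a singleton (if $A=B$), so for any $\chi\colon\hom(A,C)\to N$ the composite $\chi\circ(\id_B)_*$ has a domain of size at most one and is therefore constant.

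Next I would check that the hypotheses of Theorem~\ref{thm:opfibrations} are met with $\cat R:=\cat C$ and this $\cat S$. The base $\cat C$ is confluent Ramsey, essentially small and locally finite by assumption; each fiber $\cat S_C$, being discrete, is confluent Ramsey (as just shown), is essentially small, and is locally finite since it has at most one arrow between any two objects. Finally $\Elts\cat S$ is essentially small, because its objects form a set (pairs consisting of an object of a skeleton of $\cat C$ together with an element of the corresponding set), and locally finite, because $\hom_{\Elts\cat S}\bigl((C,s),(C',s')\bigr)$ injects into the finite set $\hom_{\cat C}(C,C')$. Theorem~\ref{thm:opfibrations} then gives that $\Elts\cat S$ is confluent and Ramsey.

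For the final clause, one identifies the slice category $A\backslash\cat C$ with $\Elts\cat S$ for the covariant hom-functor $\cat S:=\hom(A,-)\colon\cat C\to\Set$. Its objects are pairs $(C,s)$ with $s\colon A\to C$, i.e.\ the objects of $A\backslash\cat C$; an arrow $(C,s)\to(C',s')$ is a pair $(g,\phi)$ with $g\colon C\to C'$ in $\cat C$ and $\phi$ an arrow $\cat S_g(s)=g\circ s\to s'$ in the discrete category $\hom(A,C')$, which forces $g\circ s=s'$ and $\phi=\id$ — precisely a morphism of the coslice. Hence $A\backslash\cat C$ is of the form covered by the first part, and is confluent Ramsey.

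The argument is essentially formal; the transfer theorem does the real work. The only points requiring any care are the verification that a discrete category has the Ramsey property, where one must handle the quantifiers of Definition~\ref{def:ramsey} over possibly empty hom-sets (a map out of $\emptyset$ being vacuously constant), and the routine bookkeeping that essential smallness and local finiteness descend to $\Elts\cat S$.
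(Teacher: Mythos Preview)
Your proposal is correct and follows exactly the paper's approach: invoke Theorem~\ref{thm:opfibrations} after noting that discrete categories are confluent Ramsey, and then specialise to $\cat S=\hom(A,-)$ for the slice. You have simply expanded the two-sentence argument in the paper by spelling out why discrete categories satisfy Definitions~\ref{def:confluent} and~\ref{def:ramsey} and by verifying the size hypotheses, all of which is routine.
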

\begin{proof} 
Since discrete categories are confluent Ramsey, the first part follows directly from Theorem~\ref{thm:opfibrations}. For the second part, take $\cat S:C\mapsto\hom(A,C)$, then $\Elts\cat S=A\backslash\cat C$ is the slice category.
\end{proof}

\begin{defin}
    Given two classes of relational structures $\cat C$ and $\cat D$, their \emph{blowup} $\cat C\rtimes\cat D$ is the following category. Objects of $\cat C\rtimes\cat D$ are tuples $(\str C,(\str D_c)_{c\in C})$ where $\str C\in\cat C$ and $\str D_c\in\cat D$. Arrows between two objects $(\str C,(\str D_c)_{c\in C})$ and $(\str C',({\str D'}_{c'})_{c'\in C'})$ are tuples $(f, (g_c)_{c\in C})$ where $f$ is an embedding $\str C\to\str C'$ and each $g_c$ is an embedding $\str D_c\to \str D'_{f(c)}$.
\end{defin}

The following corollary is known, though we were unable to find a direct reference. It follows for example from Theorem~6.1 and Lemma~6.7 in \cite{KPT2005}.

\begin{cor}
    Let $\cat C$ and $\cat D$ be confluent Ramsey classes of relational structures. Then the blowup $\cat C\rtimes \cat D$ is confluent and Ramsey.
\end{cor}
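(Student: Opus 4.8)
The plan is to realise the blowup $\cat C\rtimes\cat D$ as a Grothendieck construction $\Elts\cat S$ and then invoke Theorem~\ref{thm:opfibrations}. Take the base to be $\cat C$ itself, and let $\cat S:\cat C\to\Cat$ send a structure $\str C$ to the fibre category $\cat D^{C}$ of ``$\cat D$-labellings of~$\str C$'': its objects are the tuples $(\str D_c)_{c\in C}$ with $\str D_c\in\cat D$, and a morphism $(\str D_c)_{c\in C}\to(\str D'_c)_{c\in C}$ is a tuple of embeddings $(g_c:\str D_c\to\str D'_c)_{c\in C}$. On an embedding $f:\str C\to\str C'$ of $\cat C$, let the functor $\cat S_f:\cat D^{C}\to\cat D^{C'}$ reindex a labelling along $f$ and place the empty $\sigma$-structure on the points of $C'$ outside the image of $f$. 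One checks that $\cat S$ preserves identities and composition, and that $\Elts\cat S$ is (isomorphic to) $\cat C\rtimes\cat D$: an object $(\str C,S)$ of $\Elts\cat S$ is a $\cat D$-labelled structure, while a morphism $(f,\phi):(\str C,S)\to(\str C',S')$ unpacks to an embedding $f:\str C\to\str C'$ together with a morphism $\cat S_f(S)\to S'$ in the fibre over $\str C'$; since the empty structure is initial, it has a unique embedding into every member of $\cat D$, so $\phi$ is exactly a family $(g_c:\str D_c\to\str D'_{f(c)})_{c\in C}$, i.e.\ a blowup arrow, and composition matches on both sides.

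Once this identification is in place, Theorem~\ref{thm:opfibrations} applies directly. Its base $\cat C$ is confluent and Ramsey by hypothesis. Each fibre $\cat S_{\str C}=\cat D^{C}$ is a finite product of copies of $\cat D$, hence confluent and Ramsey by iterating Corollary~\ref{cor:prod}; the fibre $\cat D^{\emptyset}$ over an empty domain is the one-object category, which is trivially confluent Ramsey. Every category in sight is a class of relational structures, or a finite product of such, so all are essentially small and locally finite, and Theorem~\ref{thm:opfibrations} yields that $\Elts\cat S=\cat C\rtimes\cat D$ is confluent and Ramsey.

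The step that needs care is the definition of $\cat S$ on embeddings that are not surjective: the new coordinates must be filled by a distinguished member of $\cat D$, and it must be one admitting a \emph{unique} embedding into every member of $\cat D$ --- an initial object --- for otherwise $\Elts\cat S$ would acquire morphisms with no counterpart in the blowup. The empty $\sigma$-structure plays this role, exactly as the empty set does in the partition example. Under the standard convention that Ramsey classes are hereditary, the empty structure is already present and nothing further is needed; in general one first adjoins it and checks that the hypotheses survive --- adding an initial object to a connected confluent Ramsey class keeps it confluent and Ramsey, since the newly created hom-sets are all singletons and so meet the Ramsey and joint-embedding conditions trivially. I expect this bookkeeping around the empty structure, rather than the appeal to Theorem~\ref{thm:opfibrations} itself, to be the only real friction in the argument.
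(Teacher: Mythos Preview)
Your proof is correct and follows essentially the same route as the paper: realise $\cat C\rtimes\cat D$ as $\Elts\cat S$ with $\cat S(\str C)=\cat D^{C}$ and $\cat S_f$ padding by the empty structure, then apply Theorem~\ref{thm:opfibrations} together with Corollary~\ref{cor:prod} for the fibres. Your discussion of why the filler must be initial, and how to adjoin it if absent, is more careful than the paper, which simply uses the empty structure without comment.
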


\begin{proof}
    The blowup $\cat C\rtimes\cat D$ can be constructed as $\Elts\cat S$, where $\cat S$ is the following functor.
    $$
    \cat S:\cat C\to\Cat , \quad\str C\mapsto \cat D^{C}
    $$
    Given an embedding $\str C\xrightarrow{f}\str C'$ in $\cat C$ let $\cat S_f$ be the following functor.
    $$
    \cat S_f : 
    \begin{cases}
        \cat D^C\to\cat D^{C'} \\
        (\str D_c)_{c\in C} \mapsto (\tilde{\str D}_{c'})_{c'\in C'}   
    \end{cases} \text{where }
    \tilde{\str D}_{c'} = 
    \begin{cases}
    \str D_c &\text{if } c'=f(c)\\
    \emptyset &\text{if }c'\notin \mathrm{im}(f)
    \end{cases}
    $$
    Here $\emptyset$ denotes the unique structure on the empty set.
    Each $\cat D^{C}$ is confluent and Ramsey by Corollary~\ref{cor:prod}, hence so is $\Elts\cat S=\cat C\rtimes\cat D$.
\end{proof}

\subsection{Free superpositions}
We include one more transfer theorem from Bodirsky \cite{bodirsky2012transfers}, namely that the free superposition of Ramsey classes is again Ramsey, assuming the strong amalgamation property. We consider the presented proof to be conceptually interesting, it uses the trivial transfer (Proposition~\ref{prop:preadjunction}), but one of the two functors is obtained from Theorem~\ref{thm:ramsey=könig} rather than constructed explicitly.
\begin{defin}
    We say that a class of structures $\cat C$ has the \emph{strong amalgamation property} if the following holds. Given three structures $\str A$, $\str B$, $\str C$ in $\cat C$, two embeddings $\str B\xleftarrow{f}\str A\xrightarrow{g}\str C$ and two injective maps $ B\xrightarrow{h}D\xleftarrow{k}C$ such that $h\circ f= k\circ g$, and $\mathrm{im}(h)\cap\mathrm{im}(k) = \mathrm{im}(h\circ f)$, there is a structure $\str D\in\cat D$ with domain $D$, such that $h$ and $k$ are embeddings $\str B\to \str D$ and $\str C\to\str D$ respectively.
\end{defin}

\begin{defin}
    Let $\cat C$ and $\cat D$ be two classes of finite relational structures in disjoint signatures $\sigma$ and $\tau$, respectively. Let $\cat C\wedge\cat D$ be the class of all $\sigma\cup\tau$-structures, whose $\sigma$-reduct is in $\cat C$ and whose $\tau$-reduct is in $\cat D$. The class $\cat C\wedge\cat D$ is called the \emph{free superposition} of $\cat C$ and $\cat D$.
\end{defin}

\begin{prop}[Theorem~1.5 in \cite{bodirsky2012transfers}]
\label{prop:superposition}
    Let $\cat C$ and $\cat D$ be two Ramsey classes of finite relational structures, which are closed under taking induced substructures and contain only finitely many structures on any fixed domain. If both $\cat C$ and $\cat D$ have the strong amalgamation property, then the free superposition $\cat C\wedge \cat D$ is also Ramsey.
\end{prop}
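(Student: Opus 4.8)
The plan is to derive this from the trivial transfer, Proposition~\ref{prop:preadjunction}, taking for $\cat B$ the product category $\cat C\times\cat D$. This product is confluent and Ramsey by Corollary~\ref{cor:prod}: strong amalgamation of $\cat C$ and of $\cat D$ gives the joint embedding property (amalgamate over the empty structure, which lies in both classes since they are closed under induced substructures), hence confluence; essential smallness and local finiteness are automatic for classes of finite structures; and the assumption that $\cat C$ and $\cat D$ have only finitely many structures on each domain is inherited by the product. It therefore suffices to exhibit functors $F\colon\cat C\wedge\cat D\to\cat C\times\cat D$, $G\colon\cat C\times\cat D\to\cat C\wedge\cat D$ and a natural transformation $\Delta\colon\id_{\cat C\wedge\cat D}\Rightarrow G\circ F$, and then invoke Proposition~\ref{prop:preadjunction}, which will in fact yield that $\cat C\wedge\cat D$ is confluent and Ramsey.

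For $F$ I would take the reduct functor $\str E\mapsto(\str E|_\sigma,\str E|_\tau)$; since an embedding of $\sigma\cup\tau$-structures restricts to an embedding of each reduct, this is a well-defined functor. The point of the proof is that $G$ and $\Delta$ are \emph{not} written down by hand but are produced by Theorem~\ref{thm:ramsey=könig}. I would introduce an expansion $\cat Q\to\cat C\times\cat D$ whose objects over a pair $(\str A,\str B)$ are isomorphism classes of data consisting of a structure $\str G^{*}\in\cat C\wedge\cat D$ together with a coherent family of embeddings $\str E\hookrightarrow\str G^{*}$, one for every morphism $(\str E|_\sigma,\str E|_\tau)\to(\str A,\str B)$ of $\cat C\times\cat D$, satisfying a spanning condition (the underlying set of $\str G^{*}$ is the union of the images). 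The spanning condition bounds $|\str G^{*}|$, so the finiteness hypothesis on $\cat C$ and $\cat D$ makes every fibre finite, i.e.\ $\cat Q\to\cat C\times\cat D$ is precompact; the discrete fibration axiom is realised by passing to induced substructures along morphisms of $\cat C\times\cat D$; and the fibres are nonempty because strong amalgamation of $\cat C$ and of $\cat D$, applied separately to the $\sigma$- and the $\tau$-reducts, lets one glue the finitely many relevant combined structures --- which a priori live on different underlying sets --- into a single member of $\cat C\wedge\cat D$ that realises all of them as induced substructures. As $\cat C\times\cat D$ is confluent and Ramsey, Theorem~\ref{thm:ramsey=könig} together with the discrete Grothendieck correspondence of Lemma~\ref{lem:discreteGrothendieck} (cf.\ Corollary~\ref{cor:structures}) provides a functorial section $\cat C\times\cat D\to\cat Q$; post-composing with the forgetful functor $\cat Q\to\cat C\wedge\cat D$ defines $G$, and the embedding that a section records over $(\str E|_\sigma,\str E|_\tau)$ at the identity morphism is the component $\Delta_{\str E}$, whose naturality is forced by the arrows of $\cat Q$.

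I expect the main obstacle to be the design and verification of the expansion $\cat Q$: the object data must be rich enough that a section yields an honest natural transformation $\Delta$ --- this is why $\str G^{*}$ is required to contain a copy of $\str E$ for \emph{every} morphism into $(\str A,\str B)$, not only for $\str E=(\str A,\str B)$ when $A=B$ --- yet lean enough, through the spanning condition, to stay precompact so that the section exists. The delicate step is checking that $\cat Q\to\cat C\times\cat D$ really is a discrete fibration, in particular that the induced-substructure lift along an arbitrary morphism of $\cat C\times\cat D$ --- including morphisms between pairs whose domains have different sizes --- is again an object of $\cat Q$ with a coherent embedding family; this is exactly where strong amalgamation of the two base classes is used and where the combinatorial care is needed. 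Beyond that everything is formal: the passage from the generalized K\H{o}nig lemma to sections is the discrete Grothendieck correspondence, and the passage from $(F,G,\Delta)$ to the Ramsey property of $\cat C\wedge\cat D$ is Proposition~\ref{prop:preadjunction}.
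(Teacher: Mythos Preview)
Your high-level strategy is exactly the paper's: take $F$ to be the reduct functor into $\cat C\times\cat D$, manufacture $G$ and $\Delta$ not by hand but as a solution of a suitable diagram over $\cat C\times\cat D$ via Theorem~\ref{thm:ramsey=könig}, and conclude with Proposition~\ref{prop:preadjunction}. The difference lies in the diagram you feed into Theorem~\ref{thm:ramsey=könig}.

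The paper's diagram is far more concrete than your $\cat Q$. Over a pair $(\str C,\str D)$ it takes the set of $\sigma\cup\tau$-structures on the \emph{fixed domain} $C\times D$ such that every section of the first projection is an embedding $\str C\hookrightarrow\str E|_\sigma$ and every section of the second projection is an embedding $\str D\hookrightarrow\str E|_\tau$. Non-emptiness is isolated as Lemma~\ref{lem:superposition}: strong amalgamation gives, for any surjection $D\twoheadrightarrow C$, a structure on $D$ into which all sections embed $\str C$. The transition maps are pullback along $f\times g$, so a solution literally is a functor $G$ with $G(f,g)=f\times g$, and $\Delta_{\str E}$ is simply the diagonal $E\to E\times E$. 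No isomorphism classes, no forgetful functor, no rigidity argument.

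Your $\cat Q$ can be made to work, but several steps you flag as routine are not, and one claim is misplaced. First, for the ``forgetful functor $\cat Q\to\cat C\wedge\cat D$'' to exist on arrows you need the objects of $\cat Q$ to be rigid; the spanning condition does force this (any automorphism fixes every point in every image, hence all of $\str G^{*}$), but this has to be stated and used. Second, strong amalgamation is needed only to show the fibres of $\cat Q$ are nonempty; the discrete-fibration check --- restricting $(\str G^{*},(\iota_\phi))$ along $(f,g)$ by taking the induced substructure on the union of the images $\iota_{(f,g)\circ\phi'}$ --- is purely formal once closure under induced substructures is assumed, and does not use amalgamation at all. Third, the actual non-emptiness argument (building one $\str G^{*}$ carrying a \emph{coherent} cocone of embeddings indexed by the whole comma category $F\downarrow(\str A,\str B)$) is itself nontrivial and essentially forces you to prove an analogue of Lemma~\ref{lem:superposition} anyway. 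The paper's fixed-domain trick sidesteps all three issues at the cost of that short inductive lemma.
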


\begin{lem}
\label{lem:superposition}
If a class $\cat C$ of relational structures has the strong amalgamation property, then for a given structure $\str C\in\cat C$ and a surjective map $D\xrightarrow{\pi} C$, there is a structure $\str D\in\cat C$ with domain $D$, such that any right inverse $C \xrightarrow{s} D$ of $\pi$ (i.e. any map $s$ with $\pi\circ s=\id_C$) is an embedding $\str C\to \str D$.
\end{lem}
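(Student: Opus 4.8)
The plan is to prove the lemma by an induction that blows up the points of $\str C$ one new element at a time, each step being a single application of the strong amalgamation property, while carrying along the stronger invariant that \emph{every} section of the projection restricted so far is an embedding.

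First I would fix the set-up. The structures in $\cat C$ being finite, $D$ is finite; fix one right inverse $s_0\colon C\to D$ of $\pi$ and enumerate the remaining elements of $D$ as $e_1,\dots,e_m$, so that $D=s_0(C)\sqcup\{e_1,\dots,e_m\}$. For $0\le j\le m$ set $D_j:=s_0(C)\cup\{e_1,\dots,e_j\}$ and $\pi_j:=\pi|_{D_j}\colon D_j\to C$, which is surjective because $s_0(C)\subseteq D_j$. I will construct structures $\str D_j\in\cat C$ with domain $D_j$ such that every section of $\pi_j$ is an embedding $\str C\to\str D_j$; then $\str D:=\str D_m$ is the desired structure. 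For $j=0$ the map $\pi_0$ is a bijection, hence its unique section is $s_0$, so I let $\str D_0$ be the transport of $\str C$ onto $s_0(C)$ along $s_0$, which makes $s_0$ an isomorphism.

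The inductive step is where the only real content lies. Assume $\str D_{j-1}\in\cat C$ has been constructed, write $x:=\pi(e_j)$ and $b:=s_0(x)\in D_{j-1}$, and observe $b\ne e_j$. Let $\str A$ be the substructure of $\str D_{j-1}$ induced on $D_{j-1}\setminus\{b\}$, and let $\str D_{j-1}'$ be the copy of $\str D_{j-1}$ obtained by renaming $b$ to $e_j$; note that $D_{j-1}'=(D_{j-1}\setminus\{b\})\cup\{e_j\}$ and that $\str A$ is also the substructure of $\str D_{j-1}'$ induced on $D_{j-1}\setminus\{b\}$. I would apply strong amalgamation to the span $\str D_{j-1}\hookleftarrow\str A\hookrightarrow\str D_{j-1}'$ (inclusions) together with the inclusions $h\colon D_{j-1}\hookrightarrow D_j$ and $k\colon D_{j-1}'\hookrightarrow D_j$, where $D_j:=D_{j-1}\cup\{e_j\}$; the commutativity $h|_{\str A}=k|_{\str A}$ is clear and $\mathrm{im}(h)\cap\mathrm{im}(k)=D_{j-1}\setminus\{b\}$, the image of $\str A$, so the disjointness hypothesis holds. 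Strong amalgamation then produces $\str D_j\in\cat C$ on $D_j$ in which both $\str D_{j-1}$ and $\str D_{j-1}'$ sit as induced substructures. To verify the invariant, take a section $s$ of $\pi_j$. If $s(x)\ne e_j$, then $s$ never takes the value $e_j$, so $s$ is a section of $\pi_{j-1}$, hence an embedding $\str C\to\str D_{j-1}$ by induction and therefore an embedding into $\str D_j$. If $s(x)=e_j$, let $\tilde s$ agree with $s$ away from $x$ and satisfy $\tilde s(x)=b$; then $\tilde s$ is a section of $\pi_{j-1}$ — it remains injective because $\pi(s(x'))=x'\ne x$ forces $s(x')\ne b$ for every $x'\ne x$ — hence an embedding $\str C\to\str D_{j-1}$ by induction, and postcomposing with the renaming isomorphism from $\str D_{j-1}$ to $\str D_{j-1}'$ gives back $s$, now an embedding $\str C\to\str D_{j-1}'$ and so an embedding into $\str D_j$.

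I do not foresee a real obstacle; the argument is short, and the effort is the bookkeeping of the inductive step, namely checking the disjointness hypothesis of strong amalgamation and keeping track of which maps are sections of which restricted projection. Beyond the strong amalgamation property itself, the proof uses only that $\cat C$ is closed under isomorphism (to form $\str D_0$ and the copies $\str D_{j-1}'$) and under taking induced substructures (to form $\str A$), both standing assumptions in this context.
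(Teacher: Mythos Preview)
Your proof is correct and takes essentially the same approach as the paper: add one element at a time, at each step amalgamating the previously built structure with a renamed copy of itself over the substructure obtained by deleting the duplicated point. The paper organizes the same induction top-down (factoring $\pi$ through a set $D'$ with $|D'|=|D|-1$ and noting that the surjection $D\to D'$ has exactly two sections), while you build bottom-up from a fixed base section $s_0$; the amalgamation step and the verification that every section of the enlarged projection is an embedding are identical in substance.
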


\begin{proof}
We use induction on the size of $D$. As a first step, observe that the statement is true if $D$ has the same cardinality as $C$. 

If $D$ is larger than $C$, any surjective map $\pi:D\to C$ factors through a set $D'$ that contains exactly one element less than $D$.
$$
\begin{tikzcd}
D \arrow[r, "p"', two heads] \arrow[rr,"\pi", two heads, bend left] & D' \arrow[r, "q"', two heads] & C
\end{tikzcd}
$$
By the induction hypothesis, there is a structure $\str D'$ with domain $D'$ such that every right inverse of $q$ is an embedding $\str C\to \str D'$.
Now, let $s$ and $s'$ be the two right inverses of $p$ and let $\str D''$ be the substructure of $\str D'$ on the subset of $D'$ on which $s$ and $s'$ agree. By the strong amalgamation property of $\cat C$, there is a structure $\str D$ with domain $D$, such that both $s$ and $s'$ are embeddings $\str D'\to \str D$.
$$
\begin{tikzcd}
\str D' \arrow[r,"s"]       & \str D                   \\
\str D'' \arrow[u,hook] \arrow[r,hook] &\str D' \arrow[u, "s'"']
\end{tikzcd}
$$
We claim that $\str D$ is the desired structure. In fact, any right inverse of $\pi$ decomposes into a right inverse of $q$ followed by either $s$ or $s'$, all of which are embeddings.
\end{proof}

\begin{proof}[Proof of Proposition~\ref{prop:superposition}]
Note that there is the following inclusion functor. 
$$
\cat C\wedge \cat D\to \cat C\times \cat D, \quad\str E\mapsto(\str E|_\sigma, \str E|_\tau)$$
The aim is to find a functor $G:\cat C\times \cat D\to \cat C\wedge \cat D$ and a natural transformation~$\Delta$ as in the following diagram, then the claim will follow from Proposition~\ref{prop:preadjunction} and Corollary~\ref{cor:prod}.
$$
\begin{tikzcd}
\cat C\wedge \cat D \arrow[rd, hook] \arrow[rr, "\id_{\cat C\wedge \cat D}"] & {} \arrow[d, "\Delta"', Rightarrow] & \cat C\wedge \cat D\\
& \cat C\times \cat D \arrow[ru, "G"']     &    
\end{tikzcd}
$$
To that end, for two given structures $\str C\in \cat C$ and $\str D\in \cat D$ we consider structures $\str E\in \cat C\wedge\cat D$ on the domain $C\times D$ with the following properties.
\begin{enumerate}
    \item Every right inverse $C\xrightarrow{s}C\times D$  of the  first projection is an embedding $\str C\to\str E|_\sigma$. 
    \item Every right inverse $D\xrightarrow{t}C\times D$ of the second projection is an embedding $\str D\to\str E|_\tau$.
\end{enumerate}
Such $\str E$ exist by Lemma~\ref{lem:superposition}.
Now, define a diagram $\cat F:(\cat C\times \cat D)\op\to\Set$, which sends pairs $(\str C,\str D)$ to the set of all structures $\str E$ that fulfill 1) and 2). Given a pair of embeddings $\str C'\xrightarrow{f}\str C$ and $\str D'\xrightarrow{g} \str D$ we define the map $\cat F_{f,g}$ to send a structure $\str E$ with domain $C\times D$ to the unique structure $\str E'$ with domain $C'\times D'$ such that the pair $(f,g)$ is an embedding $\str E'\to \str E$. One may check that $\str E'$ still fulfills properties 1) and 2).

The sets $\cat F_{\str C,\str D}$ are finite because there are only finitely many structures in $\cat C\wedge\cat D$ with domain $C\times D$, and nonempty by Lemma~\ref{lem:superposition}. Hence, it has a solution by Theorem~\ref{thm:ramsey=könig} and this solution is a functor $G$ as above. To define the natural transformation $\Delta$, let $\str E\in\cat C\wedge\cat D$, then $\cat G_{\str E}$ is a structure on domain $E\times E$. The diagonal map $E\to E\times E$ is right inverse to both projections, hence it is an embedding $\str E\to \cat G_{\str E}$. Let $\Delta_{\str E}$ be this embedding.
\end{proof}

\section*{Acknowledgments}

The author was funded by the European Unions ERC Synergy Grant 101071674, POCOCOP.
Views and opinions expressed are however those of the author(s) only and do not necessarily reflect those of the European Union or the European Research Council Executive Agency. Neither the European Union nor the granting authority can be held responsible for them.

Moreover, we would like to thank Mat\v{e}j Kone\v{c}n\'y for his remarks, his insights and for being an amazing source of inspiration.

\bibliographystyle{elsarticle-num}

\end{document}